\newcommand{\iint}{\int\!\!\!\int}
\newcommand{\iiint}{\int\!\!\!\int\!\!\!\int}
\def\xrightarrow{\longrightarrow}
\newcommand{\eqref}[1]{(\ref{#1})}
\newtheorem{proposition}{Proposition}[section]
\newtheorem{corollary}[proposition]{Corollary}
\newtheorem{lemma}[proposition]{Lemma}
\newtheorem{theorem}[proposition]{Theorem}
\begin{document}
\begin{frontmatter}

\title{Scaling limits of spatial compartment models for
chemical reaction networks}
\runtitle{Scaling limits of spatial compartment models}

\begin{aug}
\author[A]{\fnms{Peter}~\snm{Pfaffelhuber}\ead
[label=e1]{p.p@stochastik.uni-freiburg.de}}
\and
\author[B]{\fnms{Lea}~\snm{Popovic}\corref{}\ead[label=e2]{lpopovic@mathstat.concordia.ca}\thanksref{T1}}
\runauthor{P. Pfaffelhuber and L. Popovic}
\thankstext{T1}{Supported by the University Faculty award and Discovery
grant of NSERC (Natural Sciences and Engineering Council of Canada).}
\affiliation{Albert-Ludwigs-Universit\"at Freiburg and Concordia University}
\address[A]{Abteilung f\"ur Mathematische Stochastik\\
Albert-Ludwigs-Universit\"at Freiburg\\
Eckerstr. 1\\
79104 Freiburg\\
Germany\\
\printead{e1}}
\address[B]{Department of Mathematics and Statistics\\
Concordia University\\
Montreal QC H3G 1M8\\
Canada\\
\printead{e2}}
\end{aug}

%
\received{\smonth{2} \syear{2013}}
%
\revised{\smonth{7} \syear{2014}}

%
\begin{abstract}
We study the effects of fast spatial movement of molecules on the
dynamics of chemical species in a spatially heterogeneous chemical
reaction network using a compartment model.
The reaction networks we consider are either
single- or multi-scale. When reaction dynamics is on a single-scale, fast
spatial movement has a simple effect of averaging reactions
over the distribution of all the species. When reaction
dynamics is on multiple scales, we show that spatial
movement of molecules has different effects depending on whether the
movement of each type of species is faster or slower than the
effective reaction dynamics on this molecular type.
We obtain results for both when the system is without and with
conserved quantities, which are linear combinations of species
evolving only on the slower time scale.
\end{abstract}

%
\begin{keyword}[class=AMS]
\kwd[Primary ]{60J27}
\kwd{60J25}
\kwd[; secondary ]{60F17}
\kwd{92C45}
\kwd{80A30}
\end{keyword}

\begin{keyword}
\kwd{Chemical reaction network}
\kwd{multiple time scales}
\kwd{stochastic averaging}
\kwd{scaling limits}
\kwd{quasi-steady state assumption}
\end{keyword}
%
\end{frontmatter}

\section{Introduction}\label{sec1}
When chemical species react, they are present in some (open or closed)
system with a spatial dimension. Most models of chemical reaction
systems describe the evolution of the concentration of chemical
species and ignore both stochastic and spatial effects inherent in the
system. This can be justified by law of large number results when
both: the number of species across all molecular types is large, and
when the movement of molecules within the system is much faster than
the chemical reactions themselves. In applications where these
assumptions hold, the system is spatially homogeneous, and the use of
the deterministic law of mass action kinetics is approximately
appropriate [\citet{Kurtz1970}].

However, in biological cells, low numbers of certain key chemical
species involved in the reaction systems result in appreciable noise
in gene expression and many regulatory functions of the cell, and lead
to cell--cell variability and different cell fate decisions
[\citet{McadamsArkin}, \citet{ElowitzSwain}]. In order to
understand the key effects of intrinsic noise in chemical reaction
networks on the overall dynamics, one needs to derive new
approximations of stochastic models describing the evolution of
molecular counts of chemical species. Furthermore, the cell is not a
spatially homogeneous environment, and it has been repeatedly
demonstrated that spatial concentration of certain molecules plays an
important role in many cellular processes [\citet{HowardRutenberg},
\citet{Takahashi}]. Deterministic spatial models
(reaction--diffusion PDEs) are insufficient for this purpose, since
local fluctuations of small molecular counts can have propagative
effects, even when the overall total number of molecules in the cell
of the relevant species is high.

Numerical simulations of biochemical reactions are indispensable since
the stochastic spatial dynamics of most systems of interest is
analytically intractable. A number of different simulation methods
have been developed for this purpose, ranging from exact methods
(accounting for each stochastic event) to approximate methods
(replacing exact stochastics for some aspects of the system with
approximate statistical distributions); see, for example,
\citet{FangeElf}, \citet{PetzoldKhammash}, \citet{JeschkeUhrmacher}. For
effective computations, a mesoscopic form of the full stochastic
spatial reaction model is necessary. These are compartment models in
which a heterogeneous system is divided into homogeneous subsystems in
each of which a set of chemical reactions are performed. Molecular
species are distributed across compartments and their diffusion is
modeled by moves between neighboring compartments
[\citet{BurrageBurrage}].

An important mathematical feature of models of biochemical reactions
lies in the essential multi-scale nature of the reaction processes.
In some cases, all chemical species are present in a comparable amount
and change their concentrations on the same temporal scale. We call
this a \emph{single-scale} reaction network. However, if at least one
chemical species changes its abundance (relative to its abundance) on
a much faster time scale, we call this a \emph{multi-scale} reaction
network. The fast change of the concentrations of some chemical
species then has an impact on the dynamics of the slow species. When
one adds spatial movement of species into the system, then the species
with fast movement can have an averaging effect on the dynamics as
well. The overall dynamics depends on how these two averaging factors
interact, and subsequently determines the evolution of the slow
species on the final time scale of interest.

In this paper, we analyze the effects of movement of molecules on the
dynamics of the molecular counts of chemical species. We consider a
finite number of compartments in which different reactions can happen,
with species moving between compartments, and derive results for the
evolution of the sum total of molecules in all compartments. We
consider the following for chemical reactions within compartments: (i)
single-scale chemical reactions, (ii) multi-scale chemical reactions
without conserved quantities and (iii) multi-scale chemical reactions
with conserved quantities. We focus on the derivation of simplified
models obtained as limits of rescaled versions of the original model
[in the spirit of \citet{KangKurtzPopovic2012},
\citet{BallKurtzPopovicRempala2006}, \citet{Zeiser2012}]. We
stress that our results are for mesoscopic models of spatial systems,
as opposed to models in which the number of compartments increases and
the size of compartments shrinks [\citet{Blount},
\citet{Kouritzin}, \citet{Kotelenez}].

Our goal is to find reduced models that capture all relevant stochastic
features of the original model, while focusing only on the quantities that
are easy to measure (sum total of molecular numbers for each species)
in the system.
Our results for this reduced dynamics make stochastic simulations
almost trivial,
while at the same time differentiating (being able to detect) between
different cases
of system heterogeneity and between different relative time scales of
species movement.

\subsection{Outline of results}
 After introducing a model for chemical reactions in
Section~\ref{ss:MarkovChain}, we provide results in nonspatial
systems which we need later for spatial results: asymptotics for a
single-scale reaction network (Lemma~\ref{lem:crnlimit-ss}),
asymptotics for two-scale systems without
(Lemma~\ref{lem:crnlimit-ts}) and with conserved
quantities on the fast time scale (Lemma~\ref{lem:crnlimit-cons}), as
well as extensions to three-scale systems
(Lemma~\ref{lem:crnlimit-ts3}). Examples of each type of system are
provided as well (Examples~\ref{ex:self}, \ref{ex:production},
\ref{ex:michmen}). In Section~\ref{S:3}, we give results for
spatial compartment models: for single-scale spatial systems
(Theorem~\ref{T1}), for two-scale spatial systems
(Theorem~\ref{thm:T2}) in the absence of conserved quantities, and in
the presence of conserved quantities (Theorem~\ref{thm:T3}). We also
place the earlier examples in a spatial setting
(Examples~\ref{ex:selfSp}, \ref{ex:productionSp},
\ref{ex:michmenSp}). We conclude the paper with a discussion
of possible implications and extensions.

\begin{remark}[(Notation)]
For some Polish space $E$, we denote the set of continuous (bounded
and continuous, continuous with compact support), real-valued
functions by $\mathcal C(E)$ [$\mathcal C_b(E)$, $\mathcal
C_c(E)$]. In general, we write $\underline x:= (x_k)_k$ for vectors
and $\underline{\underline x} := (x_{ik})_{i,k}$ for matrices. In
addition, $\underline x_{i\cdot}$ is the $i$th line and $\underline
x_{\cdot k}$ is the $k$th column of $\underline{\underline x}$. We
denote by $\mathcal D(I; E)$ the space of c\`adl\`ag functions
$I\subseteq\mathbb R \to E$, which is equipped, as usual, with the
Skorohod topology, metrized by the Skorohod metric, $d_{Sk}$. For
sets $F, F' \subseteq E$, we write $F-F' := \{f\in F\dvtx f\notin F'\}$.
\end{remark}

\section{Chemical reactions in a single compartment}
\label{S:resultsSingle}
Before we present our main results on spatial systems in the next
section, we provide basic results on reaction networks in a
single compartment, which are special cases of theorems given in
\citet{KangKurtz2013}. Our results for the spatial case both rely on them
and are proved using similar techniques.

We consider a set $\mathcal I$ of different chemical
species, involved in $\mathcal K$ different reactions of the form
%
\begin{equation}
\label{eq:CRN} (\nu_{ik})_{i\in\mathcal I} \to \bigl(
\nu'_{ik} \bigr)_{i\in\mathcal I}
\end{equation}
with $\underline{\underline{\nu}} = (\nu_{ik})_{i\in\mathcal I,
k\in\mathcal K}, \underline{\underline{\nu'}}=
(\nu'_{ik})_{i\in\mathcal I, k\in\mathcal K}\in\mathbb
Z_+^{\mathcal
I\times\mathcal K}$ and $\nu_{ik}=l$ if $l$ molecules of the
chemical species $i$ take part in reaction $k$ and $\nu'_{ik}=l$ if
reaction $k$ produces $l$ molecules of species $i$. In the chemical
reaction literature, $
\underline{\underline\zeta}=\underline{\underline{\nu
'}}-\underline
{\underline{\nu}}$
is called the \textit{stoichiometric matrix} of the system, and
$\sum_{i\in\mathcal I} \nu_{ik}$ the \textit{order} of reaction $k$. In
addition, we set $\underline\zeta_{\cdot k} :=
(\zeta_{ik})_{i\in\mathcal I}$.

\subsection{The Markov chain model and the rescaled system}
\label{ss:MarkovChain}
Denoting by $X_i(t)$ the number of molecules of species $i$ at time
$t$, we assume that $(\underline X(t))_{t\geq0}$ with $\underline
X(t) = (X_i(t))_{i\in\mathcal I}$ is solution of
%
\begin{equation}
\label{eq:crn00} %
X_{i}(t)  =
X_{i}(0) + \sum_{k\in\mathcal K}
\zeta_{ik} Y_{k} \biggl(\int_0^t
\Lambda_{k}^{\mathrm{CR}} \bigl(\underline X(u) \bigr) \,du \biggr),
\end{equation}
where the $Y_k$'s are independent (rate 1) Poisson processes and
$\Lambda_k^{\mathrm{CR}}(\underline X(u))$ is the reaction rate of
reaction $k$ at time $u$, $k\in\mathcal K$. We will assume throughout
the following.

\begin{assumption}[(Dynamics of unscaled single compartment reaction
network)]
The \label{ass:21} reaction network dynamics satisfies the following
conditions:
\begin{longlist}[(ii)]
\item[(i)] The reaction rate $\underline x\mapsto
\Lambda_k^{\mathrm{CR}}(\underline x)$ is a nonnegative locally
Lipshitz, locally bounded function and $\Lambda_k^{\mathrm{CR}}\neq
0$, $k\in\mathcal K$.
\item[(ii)] Given $(Y_k)_{k\in\mathcal K}$, the
time-change equation \eqref{eq:crn00} has a unique solution.
\end{longlist}
\end{assumption}

 The most important chemical reaction kinetics is given by
mass action, that is,
%
\begin{equation}
\label{eq:mak1} \Lambda_{k}^{\mathrm{CR}}(\underline x) =
\kappa'_{k} \prod_{i\in\mathcal I}
\nu_{ik}!\pmatrix{x_{i}
\cr
\nu_{ik}}
\end{equation}
for constants $\kappa_{k}$. In other words, the rate of reaction $k$
is proportional to the number of possible combinations of reacting
molecules.
Solutions to \eqref{eq:crn00} can be guaranteed by using, for example,
\citet{EthierKurtz1986}, Theorem~6.2.8; see also their Remark~6.2.9(b). Note, however, that Assumption~\ref{ass:21}(i) does not
suffice to guarantee a global solution to \eqref{eq:crn00}, since it
has to be certain---usually by imposing some growth condition---that
the solution does not become infinite in finite time.

Chemical reaction networks in many applications involve chemical
species with vastly differing numbers of molecules and reactions with
rate constants that also vary over several orders of magnitude
\citet{BallKurtzPopovicRempala2006}, Examples. This wide variation in
number and rate yield phenomena that evolve on very different
time scales. Recognizing that the variation in time scales is due both
to variation in species number and to variation in rate constants, we
normalize species numbers and rate constants by powers of a parameter
$N$ which we assume to be large, and consider a sequence of models,
parametrized by $N\in\mathbb N$. Rescaled versions of the original
model, under certain assumptions, have a limit as $N\to\infty$. We
will use stochastic equations of the form (\ref{eq:crn00}) driven by
independent Poisson processes to show convergence, exploiting the law
of large numbers and martingale properties of the Poisson
processes. We rely heavily on the stochastic averaging methods that
date back to Khasminskii, for which we follow the formalism in terms
of martingale problems from \citet{Kurtz1992}.

We rescale the system as follows: consider the solution $(\underline
X^N(t))_{t\geq0}$ of \eqref{eq:crn00} with the chemical reaction
rates $\Lambda^{\mathrm{CR}}_{k}$ replaced by
$\Lambda^{\mathrm{CR},N}_{k}$. For real-valued
\begin{equation}
\label{eq:scalingParms} \underline\alpha= (\alpha_i)_{i\in\mathcal I},\qquad
\underline\beta= (\beta_k)_{k\in\mathcal K},\qquad \gamma
\end{equation}
with $\alpha_i\geq0, i\in\mathcal I$, we denote the $(\underline
\alpha, \underline\beta, \gamma)$-rescaled system by
%
\begin{equation}\quad
\label{eq:resc0} V_{i}^N(t) := N^{-\alpha_i}
X_{i}^N \bigl(N^\gamma t \bigr),\qquad  
 \lambda^{\mathrm{CR},N}_{k}(\underline v):=N^{-\beta_k}\Lambda
^{\mathrm{CR},N}_{k} \bigl( \bigl(N^{\alpha_i}v_i
\bigr)_{i\in\mathcal I} \bigr), 
\end{equation}
where $(\underline\alpha, \underline\beta, \gamma)$ is chosen so
that: $V^N_{i}=\mathcal O(1), i\in\mathcal I$, for all time (a.s.
does not go infinity in finite time, but also does not have a.s. zero
limit for all time), and $\lambda^{\mathrm{CR},N}_{k}=\mathcal O(1),
k\in\mathcal K$ (for all values of $\underline v$ when it is not equal
to zero). We will restrict to the case $\gamma=0$ which can always be
achieved when considering $\beta_k' = \beta_k+\gamma, k\in\mathcal
K$. From \eqref{eq:crn00}, we see that the $(\underline\alpha,
\underline\beta, \gamma)$-rescaled system $\underline
V^N:=(\underline
V^N(t))_{t\geq0}$ is a solution to the system of stochastic equations
%
\begin{equation}
\label{eq:crn-ss} %
\begin{aligned} V^N_{i}(t) & =
V^N_{i}(0) + \sum_{k\in\mathcal K}
N^{-\alpha_i} \zeta_{ik}Y_{k} \biggl(N^{\beta_k + \gamma}
\int_0^t \lambda_{k}^{\mathrm{CR},N}
\bigl(\underline V^N(u) \bigr) \,du \biggr), \end{aligned} %
\end{equation}
which has a unique solution thanks to Assumption~\ref{ass:21}. In
vector notation, we use the diagonal matrix $N^{-\underline\alpha}$
with $i$th diagonal entry $N^{-\alpha_i}$ and write
%
\begin{equation}
\label{eq:crn-ss2} \underline V^N(t) = \underline V^N(0) +
\sum_{k\in\mathcal K} N^{-\underline\alpha}\underline
\zeta_{\cdot k} Y_{k} \biggl(N^{\beta_k + \gamma} \int
_0^t \lambda_{k}^{\mathrm{CR},N}
\bigl(\underline V^N(u) \bigr) \,du \biggr).
\end{equation}

The reaction rates satisfy the following.

\begin{assumption}[(Dynamics of scaled single compartment reaction
network)]
There \label{ass:22} exist locally Lipshitz functions
$\lambda^{\mathrm{CR}}_{k}\dvtx \mathbb R_+^{\mathcal I}\to\mathbb R_+$,
$k\in\mathcal K$ with
%
\begin{equation}
\label{eq:resc10} %
\begin{aligned} N^{-\beta_k}\Lambda^{\mathrm{CR},N}_{k}
\bigl( \bigl(N^{\alpha_i}v_i \bigr)_{i\in\mathcal I} \bigr)
\mathop{\xrightarrow}^{N\to\infty} \lambda^{\mathrm{CR}}_{k}(\underline v)
\end{aligned} %
\end{equation}
uniformly on compacts. [Without loss of generality, we will assume that
convergence in \eqref{eq:resc10} is actually an identity;
our results easily generalize by the assumed uniform convergence on compacts.]
\end{assumption}

 In the special case of mass action
kinetics (\ref{eq:mak1}), if $\alpha_i=1$ for all $i\in\mathcal I$ and
$\kappa_{k} = \kappa_{k}'
N^{-(\sum_i\nu_{ik})+1}$
with $\beta_k = 1$ and some $\kappa_{k}'>0$ for all $k\in\mathcal
K$, then
\begin{eqnarray*}
N^{-\beta_k} \Lambda_{k}^{\mathrm{CR},N} \bigl(
\bigl(N^{\alpha_i}v_{i} \bigr)_{i\in\mathcal
I} \bigr)
\mathop{\xrightarrow}^{N\to\infty} \kappa_{k}' \prod
_{i\in\mathcal I} v_i^{\nu_{ik}}.
\end{eqnarray*}
The polynomial on the right-hand side is known in the literature for
deterministic chemical reaction systems as the \textit{mass action
kinetic} rate.


\subsection{Single scale systems}
\label{Sec:23}
For $i\in\mathcal I$, the set of reactions which change the number of species
$i$ is
\[
\mathcal K_i:=\{k\in\mathcal K\dvtx \zeta_{ik}
\neq0\}
\]
(a reaction of the form $A+B\to A+C$ does not change
the number of species $A$). A chemical reaction network is a
\textit{single scale system} if $(\underline\alpha,
\underline\beta, \gamma)$ from \eqref{eq:scalingParms} satisfy
%
\begin{equation}
\label{eq:single-scale} \max_{k\in\mathcal K_i}\beta_k +\gamma=
\alpha_i,\qquad i\in\mathcal I.
\end{equation}
For $i\in\mathcal I$, let $\mathcal K^*_i\subseteq\mathcal K_i$ be the
set of reactions such that $\beta_k +\gamma= \alpha_i$, and let
$\mathcal K^*=\bigcup_{i\in\mathcal I}\mathcal K^*_i$. Define
$\underline{\underline\zeta}^*$ by
\[
\zeta^*_{ik}=\lim_{N\to\infty}N^{-\alpha_i}N^{\beta_k+\gamma
}
\zeta_{ik}.
\]
Then $\underline{\underline\zeta}^*$ is the matrix whose
$i\in\mathcal I, k\in\mathcal K_i^*$ entries are $\zeta_{ik}$ and its
$i\in\mathcal I, k\in\mathcal K_i- \mathcal K_i^*$ entries are
zero. Let $\mathcal I_\circ$ be the subset of species with
$\alpha_i=0$, called the \textit{discrete species}, and let $\mathcal
K^*_\circ=\bigcup_{i\in\mathcal I_\circ} \mathcal K^*_i$, called
the \emph{slow reactions}. Let $\mathcal I_\bullet$ be the subset of
species with $\alpha_i>0$, called the \emph{continuous species}, and
let $\mathcal K^*_\bullet=\bigcup_{i\in\mathcal I_\bullet} \mathcal
K^*_i$, called the \emph{fast reactions}. Then $\mathcal K^*=\mathcal
K^*_\circ\cup\mathcal K^*_\bullet$. Note that by definition
$\mathcal I_\circ$ and $\mathcal I_\bullet$ are disjoint, and by
definition of $\mathcal K^*_i$ (and as reaction rates come with a
single scaling $N^{\beta_k+\gamma}$), $\mathcal K^*_\circ$ and
$\mathcal K^*_\bullet$ are also disjoint. In the limit of the
rescaled system, the species indexed by $\mathcal I_\circ$ are
$\mathbb
Z_+$-valued (hence the name discrete species), while the species
indexed by $\mathcal I_\bullet$ are $\mathbb R_+$-valued (continuous
species). See Table~\ref{tab:1} for an overview of these
definitions. We next assume the following.

\definecolor{gray}{gray}{0.75}
\begin{table}
\tabcolsep=0pt
\caption{An overview of different sets and
possibilities in the case $\gamma=0$.
The set $\mathcal I$ is split into discrete ($\mathcal I_\circ$)
and continuous ($\mathcal I_\bullet$) chemical species,
while the set $\mathcal K^{\ast}$ is split
into slow ($\mathcal K^\ast_\circ$) and fast ($\mathcal K^\ast
_\bullet
$) reactions.
The gray boxes give the reactions which still appear in
the limit dynamics.
A special feature of single-scale systems is that
discrete species are exactly changed through slow reactions, and
continuous species are changed
by fast reactions. In particular, discrete species are not changed
by fast reactions. This is different in
multi-scale networks; see Table \protect\ref{tab:2}}\label{tab:1}
\begin{tabular*}{\textwidth}{@{\extracolsep{\fill}}lcc@{}}
\hline
& \multicolumn{1}{c}{\textbf{Slow reactions}} & \multicolumn{1}{c}{\textbf{Fast reactions}} \\
& \multicolumn{1}{c}{$\bolds{k\in\mathcal K^\ast_\circ, \beta_k=0}$} &
\multicolumn{1}{c}{$\bolds{k\in\mathcal K^\ast_\bullet, \beta_k>0}$} \\
\hline
\multicolumn{1}{@{}l}{\multirow{2}{90pt}[8pt]{Discrete species,
$\alpha_i=0, i \in\mathcal I_\circ$}}
& \multicolumn{1}{>{\columncolor[gray]{0.8}[-2pt][0pt]}c}{$\zeta_{ik}^\ast
\cases{\neq0,
&\quad $k\in\mathcal K_i^\ast$,\vspace*{2pt}\cr
=0, &\quad $\mbox{else}$}$}
&
$\zeta_{ik} = \zeta_{ik}^\ast= 0$\\[12pt]
\multicolumn{1}{@{}l}{\multirow{2}{90pt}[8pt]
{Continuous species, $\alpha_i>0, i \in\mathcal
I_\bullet$}}&
\multicolumn{1}{c}{\multirow{2}{130pt}{\centering $\zeta_{ik}=0$ or $\beta_k+\gamma-\alpha_i<0$
$\Longrightarrow\zeta^\ast_{ik}=0$}}&
\multicolumn{1}{>{\columncolor[gray]{0.8}[0pt][0pt]}c}{$\zeta_{ik}^\ast
\cases{\neq0, &\quad
$k\in\mathcal K_i^\ast$,\vspace*{2pt}\cr=0, &\quad $\mbox{else}$}
$}\vspace*{6pt}\\
\hline
\end{tabular*}
\end{table}
%

%
\begin{table}
\tabcolsep=0pt
\caption{As in the single-scale case, the set $\mathcal I$ is split into discrete
and continuous chemical species. In addition, discrete and continuous species
are either changed on the fast or slow time scale. (This means that
$\mathcal I^f_\circ, \mathcal I^f_\bullet, \mathcal I^s_\circ,
\mathcal I^s_\bullet$ are disjoint sets.) The set of reactions is
split into
several categories, which can overlap. Here, $k\in\mathcal K_\circ^f$
is a reaction which changes a discrete species on the fast time scale, etc.
Note that such a reaction can as well change a continuous species on the
slow time scale. The separation of fast and slow time scales is
determined by (\protect\ref{eq:two-scale})
with $\varepsilon=1$.
As in Table \protect\ref{tab:1}, we mark the cells which finally
determine the dynamics of the limiting object}
\label{tab:2}
\begin{tabular*}{\textwidth}{@{\extracolsep{4in minus 4in}}lccc@{}}
\multicolumn{4}{c}{\textbf{Fast time scale} $\bolds{N  \,dt}$}\\
\hline\\
&\multicolumn{1}{c}{\multirow{3}{75pt}[7pt]{\centering \textbf{Reactions of discrete
species on
fast scale} $\bolds{k\in\mathcal K^f_\circ, \beta_k=1}$}}
 &
\multicolumn{1}{c}{\multirow{3}{75pt}[7pt]{\centering \textbf{Reactions of continuous species on fast scale}
$\bolds{k\in\mathcal K^f_\bullet}$\textbf{,}
$\bolds{\beta_k>1}$}}
&
\multicolumn{1}{c@{}}{\multirow{3}{75pt}[2pt]{\centering \textbf{Other reactions}
$\bolds{k\in\mathcal K - \mathcal K^f_\circ- \mathcal K^f_\bullet}$\textbf{,} $\bolds{\beta
_k\geq0}$}}\\\\\\
\hline
\multicolumn{1}{@{}l}{\multirow{2}{90pt}[8pt]{Discrete species,
$\alpha_i=0, i = \mathcal I^f_\circ$}}
& \multicolumn{1}{>{\columncolor[gray]{0.8}[-2pt][0pt]}c}{$\zeta_{ik}^f
\cases{\neq0,
&\quad $k\in\mathcal K_i^f,$\vspace*{2pt}\cr
=0, &\quad $\mbox{else}$}$} &
$\zeta_{ik} = \zeta_{ik}^f = 0$
& $\zeta_{ik}^f = 0$
\\[12pt]
\multicolumn{1}{@{}l}{\multirow{2}{90pt}[8pt]{Continuous species,
$\alpha_i>0, i = \mathcal I_\bullet^f$}}
& \multicolumn{1}{c@{}}{\multirow{3}{80pt}[8pt]{\centering $\zeta_{ik}=0$ or
$\beta_k-\alpha_i-1<0$
$\Longrightarrow\zeta^f_{ik}=0$}}&
\multicolumn{1}{>{\columncolor[gray]{0.8}[-2pt][0pt]}c}{$\zeta_{ik}^f
\cases{\neq0, &\quad
$k\in\mathcal K_i^f,$\vspace*{2pt}\cr=0, &\quad $\mbox{else}$}
$}
& \multicolumn{1}{c@{}}{\multirow{3}{80pt}[8pt]{\centering $\zeta_{ik}=0$ or
$\beta_k-\alpha_i-1>0$
$\Longrightarrow\zeta^f_{ik}=0$}}
\\\\
\hline
\end{tabular*}
\begin{tabular*}{\textwidth}{@{\extracolsep{\fill}}lccc@{}}
\multicolumn{3}{c}{\textbf{Slow time scale} $\bolds{dt}$}\\
\hline\\
&
\multicolumn{1}{c}{\multirow{2}{130pt}[7pt]{\centering \textbf{Reactions of discrete species on slow scale
$\bolds{k\in\mathcal K^s_\circ, \beta_k=0}$}}} &
\multicolumn{1}{c}{\multirow{2}{140pt}[7pt]{\centering \textbf{Reactions of continuous species on slow scale
$\bolds{k\in\mathcal K^s_\bullet,
\beta_k>0}$}}} \\
\hline
\multicolumn{1}{@{}l}{\multirow{2}{90pt}[8pt]{Discrete species, $\alpha_i=0, i = \mathcal I^s_\circ$}}
&
\multicolumn{1}{>{\columncolor[gray]{0.8}[-2pt][0pt]}c}{$\zeta_{ik}^s
\cases{\neq0,
& \quad $k\in\mathcal K_i^s$\vspace*{2pt}\cr=0, &\quad $\mbox{else}$}
$} &
$\zeta_{ik} = \zeta_{ik}^s = 0$
\\[12pt]
\multicolumn{1}{@{}l}{\multirow{2}{90pt}[8pt]{Continuous species, $\alpha_i>0, i = \mathcal I_\bullet^s$}}
& \multicolumn{1}{c}{\multirow{3}{80pt}[8pt]{\centering$\zeta_{ik}=0$ or $\beta_k-\alpha_i<0$
$\Longrightarrow\zeta^s_{ik}=0$}}&
\multicolumn{1}{>{\columncolor[gray]{0.8}[-2pt][0pt]}c}{$\zeta_{ik}^s
\cases{\neq0, &\quad
$k\in\mathcal K_i^s,$\vspace*{2pt}\cr
=0, &\quad $\mbox{else}$}$}
\\\\
\hline
\end{tabular*}
%
\end{table}

\begin{assumption}[(Dynamics of the reaction network)]
For \label{ass:25} Poisson processes $(Y_k)_{k\in\mathcal
K^\ast_\circ}$, the time-change equation
%
\begin{eqnarray}
\label{eq:crnlimit-ss} \underline V(t) &= &\underline V(0) + \sum
_{k\in\mathcal
K_\circ^\ast} \underline\zeta_{\cdot k}^\ast
Y_k \biggl(\int_0^t
\lambda_k^{\mathrm{CR}} \bigl(\underline V(u) \bigr)\,du \biggr)
\nonumber
\\[-8pt]
\\[-8pt]
\nonumber
&&{} +
\sum_{k\in\mathcal
K^*_\bullet} \underline\zeta^*_{\cdot k} \int
_0^t \lambda_{k}^{\mathrm{CR}}
\bigl(\underline V(u) \bigr) \,du
\end{eqnarray}
has a unique solution $\underline V := (\underline V(t))_{t\geq0}$.
\end{assumption}

Actually, the last display is shorthand notation for
\begin{eqnarray*}
V_i(t) & =& V_i(0) + \sum_{k\in\mathcal K_\circ^\ast}
\zeta_{ik}^\ast Y_k \biggl(\int
_0^t \lambda_k^{\mathrm{CR}}
\bigl(\underline V(u) \bigr)\,du \biggr),\qquad i\in\mathcal I_\circ ,
\\
V_i(t) & =& V_i(0) + \sum_{k\in\mathcal K^*_\bullet}
\zeta^*_{ik} \int_0^t
\lambda_{k}^{\mathrm{CR}} \bigl(\underline V(u) \bigr) \,du,\qquad i\in
\mathcal I_\bullet.
\end{eqnarray*}
%

\begin{lemma}[(Convergence of single-scale reaction networks)]\label{lem:crnlimit-ss}
Let $\underline V^N$ be the vector
process of rescaled species amounts for the reaction network which
is the unique solution to \eqref{eq:crn-ss}. Assume
$(\underline\alpha, \underline\beta, \gamma)$ from \eqref{eq:resc0}
satisfy the single scale system assumptions
 \eqref{eq:single-scale}, and Assumptions \ref{ass:22}
and \ref{ass:25} for the rescaled reaction network are
satisfied. Then, if $V^N(0)\Longrightarrow V(0)$, the process of
rescaled amounts $\underline V^N$ converges weakly to the solution
$\underline V$ of \eqref{eq:crnlimit-ss} in the Skorohod topology.
\end{lemma}

 The proof of Lemma~\ref{lem:crnlimit-ss} is an extension of
the classical theorem for convergence
of Markov chains to solutions of ODEs; see \citeauthor{Kurtz1970}
(\citeyear{Kurtz1970,Kurtz1981}), or \citet{EthierKurtz1986}. It is essentially shown in
\citet{KangKurtz2013}, Theorem~4.1. For other recent related results,
see \citet{Zeiser2012}.
\begin{example}[(Self-regulating gene)]\label{ex:self} We give a
simple example of a single-scale reaction network which leads to a
piecewise deterministic solution [similar to
\citet{Zeiser2012}, Section~5]. 
Consider a self-regulating gene modeled by the set of reactions
\begin{eqnarray*}
\mathfrak1\dvtx & &\quad G + P \mathop{\xrightarrow}^{\kappa_{\mathfrak1}'}
G'+P,
\\
\mathfrak2\dvtx & & \quad G' \mathop{\xrightarrow}^{\kappa_{\mathfrak2}'}
G,
\\
\mathfrak3\dvtx & &\quad  G' \mathop{\xrightarrow}^{\kappa_{\mathfrak3}'}
G'+P,
\\
\mathfrak4\dvtx & &\quad P \mathop{\xrightarrow}^{\kappa_{\mathfrak4}'}
\varnothing,
\end{eqnarray*}
where $G$ is the inactivated gene, $G'$ is the activated gene (hence
$G,G'$ sums to~$1$ and is conserved by the reactions), and $P$ is
the protein expressed by the gene. Here, $ \mathfrak1$ describes
activation of the gene by the protein, $\mathfrak2$ is spontaneous
deactivation of the gene, $\mathfrak3$ is production of the protein
by the activated gene and $\mathfrak4$ is degradation of the
protein. Let $\underline x = (x_G, x_{G'},
x_P)=(1-x_{G'},x_{G'},x_P)$ and let the reaction rates be
\begin{eqnarray*}
\Lambda_{\mathfrak1}^{\mathrm{CR}}(\underline x) & =& \kappa_{\mathfrak1}'
x_G x_P =\kappa_{\mathfrak1}'
(1-x_{G'}) x_P,\\ \Lambda_{\mathfrak
2}^{\mathrm{CR}}(
\underline x)& =& \kappa_{\mathfrak2}' x_{G'},
\\
\Lambda_{\mathfrak3}^{\mathrm{CR}}(\underline x) & =& \kappa_{\mathfrak3}'
x_{G'}, \qquad\Lambda_{\mathfrak
4}^{\mathrm{CR}}(\underline x) =
\kappa_{\mathfrak4}' x_{P}
\end{eqnarray*}
with scaling $\alpha_G = \alpha_{G'}=0, \alpha_P=1$, that is,
$\mathcal I_\circ= \{G, G'\}$ and $\mathcal I_\bullet= \{P\}$, as
well as
\begin{eqnarray*}
\beta_{\mathfrak1}& =& 0, \qquad\beta_{\mathfrak2} = 0,\qquad
\beta_{\mathfrak3} = 1,\qquad
\beta_{\mathfrak4} = 1,
\\
\kappa_{\mathfrak1}'& =& N^{-1}\kappa_{\mathfrak1},\qquad
\kappa_{\mathfrak2}' = \kappa_{\mathfrak2},\qquad
\kappa_{\mathfrak3}' = N \kappa_{\mathfrak3},\qquad
\kappa_{\mathfrak4}' = \kappa_{\mathfrak4}.
\end{eqnarray*}
Then $v_G = x_G=1-v_{G'},
v_{G'}=x_{G'}, v_P = N^{-1}x_P$, and see \eqref{eq:resc10},
\begin{eqnarray*}
\lambda_{\mathfrak1}^{\mathrm{CR}}(\underline v) & =& \kappa_1
v_G v_P= \kappa_1 (1-v_{G'})
v_P,\qquad \lambda_{\mathfrak2}^{\mathrm{CR}}(\underline v) =
\kappa_2 v_{G'},
\\
\lambda_{\mathfrak3}^{\mathrm{CR}}(\underline v) & = &\kappa_3
v_{G'}, \qquad\lambda_{\mathfrak
4}^{\mathrm{CR}}(\underline v) =
\kappa_4 v_{P}.
\end{eqnarray*}
Here, $\mathcal K_\circ^\ast= \mathcal K_G = \mathcal K_{G'} =
\{\mathfrak1, \mathfrak2\}$ and $\mathcal K_\bullet^\ast=
\mathcal K_P = \{\mathfrak3, \mathfrak4\}$. In this example, the
matrices $\underline{\underline\zeta}$ and
$\underline{\underline\zeta}^\ast$ are given by
\begin{eqnarray*}
\underline{\underline\zeta} = \underline{\underline
\zeta}^\ast = %
\matrix{ G
\cr
G'
\cr
P }
\pmatrix{ -1 & 1 & 0 & 0
\cr
1 & -1 & 0 & 0
\cr
0 & 0 & 1 & -1 }.
\end{eqnarray*}
Moreover, according to Lemma~\ref{lem:crnlimit-ss}, the limit
$(\underline V(t))_{t\geq0}$ of $(V^N(t))_{t\geq0}$ solves
\begin{eqnarray*}
V_{G'}(t) & =& V_{G'}(0) + Y_{\mathfrak1} \biggl(
\kappa_{\mathfrak1} \int_0^t
\bigl(1-V_{G'}(u) \bigr)V_P(u) \,du \biggr) -
Y_{\mathfrak
2} \biggl(\kappa_{\mathfrak2} \int_0^t
V_{G'}(u)\,du \biggr),
\\
V_P(t) & = &V_P(0) + \kappa_{\mathfrak3} \int
_0^t V_{G'}(u)\,du -
\kappa_{\mathfrak4} \int_0^t
V_{P}(u)\,du.
\end{eqnarray*}\vadjust{\goodbreak}
\end{example}

\subsection{Multi-scale systems}
\label{Sec:24}

\subsubsection*{Two-scale systems}
\label{S:two-scale}
We say that the chemical network \eqref{eq:CRN} is a two scale system
if $(\underline\alpha, \underline\beta, \gamma)$ from
\eqref{eq:scalingParms} are such that: there is a partition of
$\mathcal I$ into (disjoint) $\mathcal I^f$, called the
\emph{fast species}, and $\mathcal I^s$, called the
\emph{slow species}, such that, for some $\varepsilon>0$,
%
\begin{eqnarray}
\label{eq:two-scale} %
\max_{k\in\mathcal K_i}
\beta_k +\gamma& = &\alpha_i + \varepsilon, \qquad i\in\mathcal
I^f,
\nonumber
\\[-8pt]
\\[-8pt]
\nonumber
\max_{k\in\mathcal K_i} \beta_k +\gamma& =&
\alpha_i,\qquad i\in\mathcal I^s.
\end{eqnarray}
Without loss of generality, we assume that $\gamma=0$,
and that our choice of $N$ is such that $\varepsilon=1$ in
\eqref{eq:two-scale}, so the relative change of fast species happens
at rate $\mathcal O(N)$ and the relative change of slow species
happens at rate $\mathcal O(1)$.

We first consider what happens on the \textit{faster time scale}
$N  \,dt$. For each $i\in\mathcal I^f$, let $\mathcal
K^f_i\subseteq\mathcal K_i$ be the set of reactions with $\beta_k =
\alpha_i+1$. Define
%
\begin{equation}
\label{eq:Kf} \mathcal K^f= \bigl\{k\in\mathcal K\dvtx \exists i\in
\mathcal I^f\dvtx k\in\mathcal K_i, \beta_k=
\alpha_i+1 \bigr\},
\end{equation}
and a matrix
${\underline{\underline\zeta}}^f$ with $|\mathcal I^f|$ rows and
$|\mathcal K^f|$ columns defined by
%
\begin{equation}
\label{eq:fast-zeta} \zeta^f_{ik}=\lim_{N\to\infty}N^{-(\alpha_i+1)}N^{\beta_k}
\zeta_{ik},\qquad i\in\mathcal I^f, k\in\mathcal
K_i^f.
\end{equation}
This matrix identifies a subnetwork of reactions and their effective
change on the faster time scale $N  \,dt$. Let $\mathcal
I^f_{\circ}\subseteq\mathcal I^f$ be the subset of fast species for
which $\alpha_i=0$, and let $\mathcal
K^f_{\circ}=\bigcup_{i\in\mathcal I^f_{\circ}} \mathcal K^f_i$ be the
subset of reactions changing these fast discrete species on this time
scale. Let $\mathcal I^f_{\bullet}\subseteq\mathcal I^f$ be the subset
of fast species for which $\alpha_i>0$, which are continuous species
on the fast time scale, and let $\mathcal
K^f_{\bullet}=\bigcup_{i\in\mathcal I^f_{\bullet}} \mathcal K^f_i$ be
the subset of reactions changing continuous species on this time
scale. Since $\mathcal I^f_{\circ}$ and $\mathcal I^f_{\bullet}$ are
disjoint in $\mathcal I^f=\mathcal I^f_{\circ}\cup\mathcal
I^f_{\bullet}$, and since $\beta_k$ is unique for each reaction in
$\mathcal K^f=\mathcal K^f_{\circ}\cup\mathcal K^f_{\bullet}$, it
follows that $\mathcal K^f_{\circ}$ and $\mathcal K^f_{\bullet}$ are
disjoint as well.

We next consider what happens on the \textit{slower time scale}
$dt$. For each $i\in\mathcal I^s$, let
%
\begin{equation}
\label{eq:Ks} \mathcal K^s= \bigl\{k\in\mathcal K\dvtx \exists i\in
\mathcal I^s, \beta_k=\alpha_i \bigr\},
\end{equation}
be the set of reactions such that $\beta_k = \alpha_i$, and
${\underline{\underline\zeta}}^s = (\zeta^s_{ik})_{i\in\mathcal I^s,
k\in\mathcal K^s}$ defined by
%
\begin{equation}
\label{eq:slow-zeta} \zeta^s_{ik}=\lim_{N\to\infty}N^{-\alpha_i}N^{\beta_k}
\zeta _{ik}, \qquad i\in\mathcal I^s, k\in\mathcal
K_i^s.
\end{equation}
This matrix identifies the subnetwork of reactions and their effective
change on the slower time scale $dt$. Let $\mathcal
I^s_{\circ}\subseteq\mathcal I^s$ be the subset of (discrete) slow
species for which $\alpha_i=0$, and $\mathcal
K^s_{\circ}=\bigcup_{i\in\mathcal I^s_{\circ}} \mathcal K^s_i$ the
subset of reactions changing discrete species on this time scale. Let
$\mathcal I^s_{\bullet}\subseteq\mathcal I$ be the subset of
(continuous) slow species for which $\alpha_i>0$, and $\mathcal
K^s_{\bullet}=\bigcup_{i\in\mathcal I^s_\bullet} \mathcal K^s_i$ the
subset of reactions changing continuous species on this time scale. As
before, $\mathcal I^s_{\circ}$ and $\mathcal I^s_{\bullet}$ being
disjoint in $\mathcal I^s=\mathcal I^s_{\circ}\cup\mathcal
I^s_{\bullet}$ implies that $\mathcal K^s_{\circ}$ and $\mathcal
K^s_{\bullet}$ are disjoint in $\mathcal K^s=\mathcal
K^s_{\circ}\cup\mathcal K^s_{\bullet}$ as well.

Note, however, that there is no reason for $\mathcal K^s$ to be
disjoint from $\mathcal K^f$. In fact, there may be reactions in
$\mathcal K^s$ with parameter $\beta_k$ that make an effective change
on the time scale $dt$ in a slow species of high enough abundance
$\alpha_i=\beta_k$, that also effectively change some fast species on
the time scale $N  \,dt$, that is, for some $j\in\mathcal I^f$ with
$\beta_k = \alpha_j+1$. The important factor for limiting results is
that we identify contributions from reactions on each of the two
scales independently, and make assumptions on their stability.


Our initial division of species into fast and slow may include some
\textit{conserved quantities}, that is, linear combinations of fast
species that remain unchanged on the faster time scale $N  \,dt$. Let
$\mathcal N(({\underline{\underline\zeta}}^f)^{\textsc t} )$ be the
null space
of $(\underline{\underline\zeta}^f)^{\textsc t}$. If its dimension is
\mbox{$>$0} it
is formed by all the linear combinations of species conserved by the
limiting fast subnetwork, meaning that they see no effective change on
the time scale
$N \,dt$. 
In spatial systems, the fast species are counts of species in a single
compartment (which evolves due to both, movement and chemical
reactions) while conserved quantities are the sum total of the
coordinates in all compartments (which evolves only according to
chemical reactions). For now---unless stated otherwise---we assume
that the basis for the species is such that $\mathrm{dim}(\mathcal
N(({\underline{\underline\zeta}}^f)^{\textsc t} ))=0$.

Define the \textit{fast process} $\underline V^N_f:=(\underline
V^N_f(t))_{t\geq0}$ as $\underline V^N_f(t) := (V^N_i(t))_{i\in
\mathcal
I^f}$ and the \textit{slow process} $\underline V^N_s := (\underline
V^N_s(t))_{t\geq0}$ as $\underline V^N_s(t) := (V^N_i(t))_{i\in
\mathcal
I^s}$. We give necessary assumptions on the dynamics of
$\underline V^N_f$ on the time scale $N  \,dt$ conditional on
$\underline V^N_s(t)\equiv\underline v_s$ being constant, on the
dynamics of $\underline V^N_s$, and on the overall behavior of
$\underline V^N$ in order to obtain a proper limiting dynamics of slow
species, $\underline V_s^N$.

\begin{assumption}[(Dynamics of a two-scale reaction network)] Recall
$\lambda_k^{\mathrm{CR}}$ from \eqref{eq:resc10}. The two-scale
reaction network (\ref{eq:two-scale}) with effective change
$\underline{\underline\zeta}^f$ as in~\eqref{eq:fast-zeta} on time
scale $N  \,dt$ and $\underline{\underline\zeta}^s$ as in
\eqref{eq:slow-zeta} on time scale $dt$ satisfies the following
conditions:
\label{ass:crnerg}
\begin{enumerate}[(iii)]
\item[(i)] For each $\underline v_s\in\mathbb{R}_+^{|\mathcal I^s|}$
there exists a well-defined process $\underline V_{f|\underline
v_s}$, giving the dynamics of the fast species given the vector
of slow species, that is, the solution of
%
\begin{eqnarray}
\label{eq:crnlimit-fast} %
 \underline V_{f|\underline v_s}(t)& =&
\underline V_{f|\underline v_s}(0) + \sum_{k\in\mathcal K^f_\circ}
\underline\zeta^f_{\cdot k} Y_{k} \biggl( \int
_0^t \lambda_{k}^{\mathrm{CR}}
\bigl(\underline V_{f|\underline
v_s}(u),\underline v_s \bigr) \,du \biggr)
\nonumber
\\[-8pt]
\\[-8pt]
\nonumber
&&{} +\sum_{k\in\mathcal K^f_\bullet} \underline\zeta^f_{\cdot k}
\int_0^t \lambda_{k}^{\mathrm{CR}}
\bigl(\underline V_{f|\underline
v_s}(u),\underline v_s \bigr) \,du
\end{eqnarray}
with a unique stationary probability measure $\mu_{\underline
v_s}(d\underline z)$ on $\mathbb{R}_+^{|\mathcal I^f|}$, such
that
%
\begin{equation}
\label{eq:averates} \bar\lambda_{k}^{\mathrm{CR}}(\underline
v_s) =\int_{\mathbb
{R}_+^{|\mathcal I^f|}} \lambda_{k}^{\mathrm{CR}}(
\underline z,\underline v_s) \mu_{\underline v_s}(d\underline z)<
\infty,\qquad k\in\mathcal K^s.
\end{equation}
\item[(ii)] There exists a well-defined process $\underline V_s$
that is the solution of
%
\begin{eqnarray}
\label{eq:crnlimit-ts} %
 \underline V_s(t) & =&
\underline V_s(0) + \sum_{k\in\mathcal
K^s_\circ} \underline
\zeta^s_{\cdot k} Y_{k} \biggl( \int
_0^t \bar\lambda_{k}^{\mathrm{CR}}
\bigl(\underline V_s(u) \bigr) \,du \biggr)
\nonumber
\\[-8pt]
\\[-8pt]
\nonumber
&&{} +\sum_{k\in\mathcal
K^s_\bullet} \underline\zeta^s_{\cdot k}
\int_0^t \bar\lambda_{k}^{\mathrm{CR}}
\bigl(\underline V_s(u) \bigr) \,du
\end{eqnarray}
with $\bar\lambda_k^{\mathrm{CR}}$ given by \eqref{eq:averates}.
\item[(iii)] There exists a locally bounded function $\psi\dvtx\mathbb
R_+^{|\mathcal I|} \to\mathbb R$, $\psi\ge1$ such that
$\psi(x)\to\infty$ as $x\to\infty$, and 
\begin{enumerate}[(iii-a)]
\item[(iii-a)] for each $t>0$
\[
\sup_{N}\mathbb E \biggl[\int_0^t
\psi \bigl(\underline V^N(u) \bigr)\,du \biggr]<\infty;
\]
\item[(iii-b)] for all $k\in\mathcal K$
\[
\lim_{K\to\infty}\sup_{|x|>K}
\frac{\lambda_{k}^{\mathrm
{CR}}(x)}{\psi(x)}=0.
\]
\end{enumerate}
\end{enumerate}
\end{assumption}

\begin{lemma}[(Convergence of two-scale reaction networks)]\label{lem:crnlimit-ts}
Let $\underline V^N$ be the vector
process of rescaled species amounts
for the reaction network which is the unique solution to
\eqref{eq:crn-ss} [or \eqref{eq:crn-ss2}]. Assume that
$(\underline\alpha, \underline\beta, \gamma=0)$ satisfy the two-scale
system assumptions
\eqref{eq:two-scale} for some $\mathcal I^f, \mathcal I^s$ and
$\varepsilon=1$, and the Assumptions \ref{ass:crnerg} are
satisfied. Then, if $\underline V_s^N(0)\mathop{\Longrightarrow}\limits^{N\to\infty}
\underline V_s(0)$, the process of rescaled amounts of the slow species
$\underline V^N_s (\cdot)$ converges weakly to the solution
$\underline V_s(\cdot)$ of \eqref{eq:crnlimit-ts} with rates given
by \eqref{eq:averates} in the Skorokhod topology.
\end{lemma}

The proof of Lemma~\ref{lem:crnlimit-ts} is given in \citet{KangKurtz2013}, Theorem~5.1.

\begin{example}[(Production from a fluctuating source)]
We \label{ex:production} present here an example of a reaction network
with two time scales with no conserved species on the fast time scale.
In our example, two species $A$ and $B$ react and produce species~$C$.
Source $B$ fluctuates as it is quickly transported into the
system and
degrades very fast. We have the set of reactions
%
\begin{eqnarray*}
\mathfrak1\dvtx\quad A + B \mathop{\xrightarrow}^{\kappa_{\mathfrak1}'} C,\qquad
\mathfrak{2}\dvtx\quad \varnothing\mathop{\xrightarrow}^{\kappa_{\mathfrak{2}}'} B,\qquad
\mathfrak3\dvtx\quad B \mathop{\xrightarrow}^{\kappa_{\mathfrak3'}} \varnothing.
\end{eqnarray*}
Here, the sum of the numbers of molecules $A$ and $C$ is constant
(but both will turn out to be slow species), so we only need
to consider the dynamics of the $A$ molecules. We denote molecules
numbers by $x_A$ and $x_B$, respectively, set $\underline x = (x_A,
x_B)$ and consider the reaction rates as given by mass action
kinetics,
%
\begin{eqnarray}
\label{eq:second14} \Lambda_{\mathfrak1}^{\mathrm{CR}}(\underline x)  =
\kappa_{\mathfrak1}' x_A x_B,\qquad
\Lambda_{\mathfrak{2}}^{\mathrm{CR}}(\underline x) = \kappa_{\mathfrak{2}}',\qquad
\Lambda_{\mathfrak
3}^{\mathrm{CR}}(\underline x)  = \kappa_{\mathfrak3}'
x_{B}.
\end{eqnarray}
For the scaled system, we use $\alpha_A = \alpha_C=1$,
$\alpha_B=0$. So, setting the rescaled species counts
$v_A = N^{-1}x_A, v_B = x_B$
and
%
\begin{eqnarray}
\label{eq:second14b} \beta_{\mathfrak1} &=& 0,\qquad \beta_{\mathfrak{2}} = 1,\qquad
\beta_{\mathfrak3} = 1,
\\
\label{eq:second15} \kappa_{\mathfrak1} &=&
\kappa_{\mathfrak1}',\qquad \kappa_{\mathfrak{2}} = N^{-1}
\kappa_{\mathfrak{2}}', \qquad\kappa_{\mathfrak3} = N^{-1}
\kappa_{\mathfrak3}',
\end{eqnarray}
we write
%
\begin{equation}
\label{eq:second15b} \lambda_{\mathfrak1}^{\mathrm{CR}}(\underline v) =
\kappa_1 v_A v_B,\qquad \lambda_{\mathfrak{2}}^{\mathrm{CR}}(
\underline v) = \kappa_2,\qquad \lambda_{\mathfrak3}^{\mathrm{CR}}(
\underline v) = \kappa_3 v_{B}.
\end{equation}
Now, the process $\underline V^N = (V_A^N, V_B^N)$ is given
by \eqref{eq:crn-ss} as
%
\begin{eqnarray}
\label{eq:1gene312b} %
V_A^N(t) & =&
V_A^N(0) - N^{-1} Y_{\mathfrak1} \biggl( N
\int_0^t \kappa_{\mathfrak1}
V^N_A(u) V^N_B(u) \,du \biggr),\nonumber
\\
V_B^N(t) & =& V_B^N(0) -
Y_{\mathfrak1} \biggl( N \int_0^t
\kappa_{\mathfrak1} V^N_A(u) V^N_B(u)
\,du \biggr) + Y_{\mathfrak{2}} ( N \kappa_{\mathfrak2} t )
\\
& &{}- Y_{\mathfrak{3}} \biggl( N \int_0^t
\kappa_{\mathfrak{3}} V^N_{B}(u) \,du \biggr).\nonumber
\end{eqnarray}
From this representation, it should be clear that $V_B$ is fast
while $V_A$ is a slow species. For $\gamma=0$, $\varepsilon=1$, we
have $\mathcal K^s = \{\mathfrak1\}$, $\mathcal K^f = \{\mathfrak
1, \mathfrak{2}, \mathfrak3\}$ (in particular $\mathcal K^s \cap
\mathcal K^f\neq\varnothing$), $\mathcal K_{A} = \{\mathfrak1\}$,
$\mathcal K_B = \{\mathfrak1, \mathfrak{2}, \mathfrak3\}$ and
$\mathcal I_f = \mathcal I_f^\circ= \{B\}$, $\mathcal I_s =\mathcal
I_s^\bullet= \{A\}$. The matrices describing the reaction dynamics
on both scales are
\[
\underline{\underline\zeta} = %
\matrix{ A
\cr
B } %
\pmatrix{ -1 & 0 & 0
\cr
-1 & 1 & -1},\qquad \underline{\underline\zeta}^f =
\matrix{ B} %
\pmatrix{ -1 & 1 & -1},\qquad \underline{\underline
\zeta}^s = %
\matrix{ A } %
\pmatrix{ -1 },
\]
where the three columns in $\underline{\underline\zeta}$ and
$\underline{\underline\zeta}^f$ give reactions $\mathfrak1,
\mathfrak{2}$ and $\mathfrak3$, and $\underline{\underline\zeta}^s$
is a $1\times1$-matrix since there is only one reaction where $A$
is involved. Note that $\mathcal
N((\underline{\underline\zeta}^f)^{t}) = \{0\}$, indicating that
there are no conserved quantities on the fast time scale. In order
to study the dynamics of the slow species, $\underline V_s :=V_A$,
we apply Lemma~\ref{lem:crnlimit-ts} and have to check
Assumption~\ref{ass:crnerg}. Here, for Poisson processes
$Y_{\mathfrak1}, Y_{\mathfrak{2}}$ and $Y_{\mathfrak{3}}$, and
fixed $\underline V_s = V_A= v_A$, from \eqref{eq:crnlimit-fast},
\begin{eqnarray*}
V_{B|v_A}(t) - V_{B|v_A}(0) & =& - Y_{\mathfrak1} \biggl( \int
_0^t \kappa_{\mathfrak1} v_A
V_{B|v_A}(u) \,du \biggr) + Y_{\mathfrak{2}} ( \kappa_2 t )
\\
&&{} - Y_{\mathfrak{3}} \biggl( \int_0^t
\kappa_{\mathfrak{3}} V_{B|v_A}(u) \,du \biggr)
\\
&\stackrel d =& Y_{\mathfrak{2}} ( \kappa_2 t ) -
Y_{\mathfrak1 + \mathfrak3} \biggl( \int_0^t (
\kappa_{\mathfrak1} v_A + \kappa_{\mathfrak{3}})
V_{B|v_A}(u) \,du \biggr)
\end{eqnarray*}
for some Poisson process $Y_{\mathfrak{1}+ \mathfrak3}$
which is independent of $Y_{\mathfrak2}$. Note that
$V_{B|v_A}(\cdot)$ is a birth--death process with constant birth
rate $\kappa_{\mathfrak2}$ and linear death rates, proportional to
$\kappa_{\mathfrak1} v_A + \kappa_{\mathfrak{3}}$. It is
well known that in equilibrium, $V_{B|v_A}\stackrel d = X$ with
\[
X\sim\operatorname{Poi} \biggl(\frac{\kappa_{\mathfrak
2}}{\kappa_{\mathfrak{3}} + \kappa_{\mathfrak1} v_A } \biggr),
\]
which gives the desired $\mu_{v_A}(dv_{B})$. Hence,
\eqref{eq:averates} gives
\[
\bar\lambda^{\mathrm{CR}}_{\mathfrak1}(v_A)  = \mathbf E[
\kappa_{\mathfrak1} v_A X] = \frac{\kappa_{\mathfrak1}
\kappa_{\mathfrak2} v_A}{\kappa_{\mathfrak{3}} +
\kappa_{\mathfrak1} v_A}.
\]
Finally, Lemma~\ref{lem:crnlimit-ts} implies that in the limit
$N\to\infty$, we obtain the dynamics
\[
V_A(t)  = V_A(0) - \int
_0^t \bar\lambda^{\mathrm{CR}}_{\mathfrak
1}
\bigl(\underline V_s(u) \bigr)\,du = V_A(0) - \int
_0^t \frac{\kappa_{\mathfrak1} \kappa_{\mathfrak2}
V_A(u)}{\kappa_{\mathfrak{3}} + \kappa_{\mathfrak1} V_A(u)} \,du.
\]
\end{example}

\subsubsection*{Three time scales}
Chemical reaction networks with more than two time scales also appear
in the literature; see \citet{Weinan2005} for a simulation algorithm
for such systems. One example is the heat shock response in
\textit{Escherichia coli}, introduced by \citet{Srivastava2001} and studied
in detail by \citet{Kang2012}. Here, we state an extension of
Lemma~\ref{lem:crnlimit-ts} to reaction networks with more than two
time scales [see \citet{KangKurtzPopovic2012}]. Namely, suppose that
for some $\gamma\in\mathbb R$ the parameters $\underline\alpha,
\underline\beta$ in \eqref{eq:resc0} and \eqref{eq:resc10} are such
that: there is a partition of $\mathcal I$ into disjoint sets
$\mathcal I^f, \mathcal I^m, \mathcal I^s$ such that, for some
$\varepsilon_2>\varepsilon_1>0$,
%
\begin{eqnarray}
\label{eq:three-scale} %
 \max_{k\in\mathcal K_i}
\beta_k +\gamma& =& \alpha_i + \varepsilon_2,\qquad
i\in\mathcal I^f,\nonumber
\\
\max_{k\in\mathcal K_i}\beta_k +\gamma& =&
\alpha_i + \varepsilon_1, \qquad i\in\mathcal I^m,
\\
\max_{k\in\mathcal K_i} \beta_k +\gamma& =&
\alpha_i,\qquad  i\in\mathcal I^s. \nonumber
\end{eqnarray}
We will assume, as before, that $\gamma=0$, and that our choice of $N$
is such that $\varepsilon_2=1$ in \eqref{eq:three-scale}, so the
relative change of fastest species $\mathcal I^f$ happens at rate
$\mathcal O(N)$, the relative change of the \emph{middle species}
$\mathcal I^m$ happens at rate $\mathcal O(N^{\varepsilon_1})$,
$0<\varepsilon_1<1$, and the relative change of slow species $\mathcal
I^s$ happens at rate $\mathcal O(1)$.

Again, we need to consider what happens on each single time scale
separately. In addition to earlier definitions, for each $i\in\mathcal
I^m$ we let $\mathcal K^m_i\subseteq\mathcal K$ be the set of
reactions with $\beta_k=\alpha_i+\varepsilon_1$, $\mathcal K^m=
\bigcup_{i\in\mathcal I^m}\mathcal K_i^m $, and a matrix
${\underline{\underline\zeta}}^m$ with $|\mathcal I^m|$ rows and
$|\mathcal K^m|$ columns defined by
%
\begin{equation}
\label{eq:middle-zeta} \zeta^m_{ik}=\lim_{N\to\infty}
N^{-(\alpha_i + \varepsilon_1)}N^{\beta_k}\zeta_{ik},\qquad i\in \mathcal
I^m, k\in\mathcal K_i^m,
\end{equation}
which identifies a subnetwork of reactions and their effective change
on the middle time scale $N^{\varepsilon_1}  \,dt$, and we let
$\mathcal I^m_{\circ}\subset\mathcal I^m$ be the subset of middle
species for which $\alpha_i=0$, $\mathcal
I^m_{\bullet}\subseteq\mathcal I^m$ be the subset of fast species for
which $\alpha_i>0$, and finally $\mathcal
K^m_{\circ}=\bigcup_{i\in\mathcal I^m_{\circ}} \mathcal K^m_i$, and
$\mathcal K^m_{\bullet}=\bigcup_{i\in\mathcal I^m_{\bullet}} \mathcal
K^m_i$. We now need an additional set of assumptions on the dynamics
of $\underline V^N_f$ on the time scale $N  \,dt$ conditional on
$(\underline V^N_m(t),\underline V^N_s(t))=(\underline v_m,\underline
v_s)$ being constant, and on the dynamics of $\underline V^N_m$ on the
time scale $N^{\varepsilon_1}  \,dt$ conditional on $\underline
V^N_s(t)=\underline v_s$ being constant.

\begin{assumption}[(Dynamics of a three-scale reaction network)]\label{ass:crnerg3} The
three time scale reaction network (\ref{eq:three-scale}) with
effective change (\ref{eq:fast-zeta}) on time scale $N  \,dt$,
(\ref{eq:middle-zeta}) on time scale $N^{\varepsilon_1}  \,dt$ and
(\ref{eq:slow-zeta}) on time scale $dt$ satisfies the following
conditions:
\begin{longlist}[(i-a)]
\item[(i-a)] For each $(\underline v_m,\underline
v_s)\in\mathbb{R}_+^{|\mathcal I^m|+|\mathcal I^s|}$ there exists
a well-defined process that is the solution of
\begin{eqnarray*}
\underline V_{f|(\underline v_m,\underline v_s)}(t) & = &\underline V_{f|(\underline v_m,\underline v_s)}(0)
+ \sum_{k\in\mathcal K^f_\circ} \underline\zeta^f_{\cdot
k}
Y_{k} \biggl( \int_0^t
\lambda_{k}^{\mathrm{CR}} \bigl(\underline V_{f|(\underline v_m,\underline v_s)}(u),
\underline v_m,\underline v_s \bigr) \,du \biggr)
\\
&&{} +\sum_{k\in\mathcal K^f_\bullet} \underline\zeta^f_{\cdot k}
\int_0^t \lambda_{k}^{\mathrm{CR}}
\bigl(\underline V_{f|(\underline
v_m,\underline v_s)}(u),\underline v_m,\underline
v_s \bigr) \,du
\end{eqnarray*}
with a unique stationary probability measure $\mu_{(\underline
v_m,\underline v_s)}(d\underline z)$ on $\mathbb{R}_+^{|\mathcal
I^f|}$, such that
%
\begin{equation}
\label{eq:averates-middle} \quad\tilde\lambda_{k}^{\mathrm{CR}}(\underline
v_m,\underline v_s) =\int_{\mathbb{R}_+^{|\mathcal I^f|}}
\lambda_{k}^{\mathrm{CR}}(\underline z,\underline v_m,
\underline v_s) \mu_{(\underline v_m,\underline v_s)}(d\underline z)<\infty,\qquad k\in
\mathcal K^m.
\end{equation}
\item[(i-b)] For each $\underline v_s\in\mathbb{R}_+^{|\mathcal
I^s|}$ there exists a well-defined process that is the solution
of
\begin{eqnarray*}
\underline V_{m|\underline v_s}(t) &= &\underline V_{m|\underline
v_s}(0) + \sum
_{k\in\mathcal K^m_\circ} \underline\zeta^m_{\cdot k}
Y_{k} \biggl( \int_0^t \tilde
\lambda_{k}^{\mathrm{CR}} \bigl(\underline V_{m|\underline
v_s}(u),
\underline v_s \bigr) \,du \biggr)
\\
&&{}+\sum_{k\in\mathcal
K^m_\bullet} \underline\zeta^m_{\cdot k}
\int_0^t \tilde\lambda_{k}^{\mathrm{CR}}
\bigl(\underline V_{m|\underline
v_s}(u),\underline v_s \bigr) \,du,
\end{eqnarray*}
which has a unique stationary probability measure $\mu_{\underline
v_s}(d\underline v_m)$ on $\mathbb{R}^{|\mathcal I^m|}$, such
that
%
\begin{equation}
\label{eq:averates-final} \bar\lambda_{k}^{\mathrm{CR}}(\underline
v_s) =\int_{\mathbb
{R}_+^{|\mathcal I^m|}} \tilde\lambda_{k}^{\mathrm{CR}}(
\underline v_m,\underline v_s) \mu_{\underline v_s}(d
\underline v_m)<\infty,\qquad k\in\mathcal K^s.
\end{equation}
\item[(ii)] There exists a well-defined process that is the solution
of (\ref{eq:crnlimit-ts}) with $\bar\lambda_k^{\mathrm{CR}}$ given
by \eqref{eq:averates-final}.
\item[(iii)] see Assumption~\ref{ass:crnerg}(iii).
\end{longlist}
\end{assumption}

The extension of Lemma~\ref{lem:crnlimit-ts} then becomes the following.

\begin{lemma}[(Convergence of three-scale reaction networks)]\label{lem:crnlimit-ts3}
Let $\underline V^N$ be the vector
process of rescaled species amounts
for the reaction network which is the unique solution to
(\ref{eq:crn-ss}) [or \eqref{eq:crn-ss2}]. Assume that
$(\underline\alpha, \underline\beta, \gamma=0)$ satisfy the three
time scale system assumptions
\eqref{eq:three-scale} for some $\mathcal I^f, \mathcal I^m,
\mathcal I^s$ and $0<\varepsilon_1<\varepsilon_2 = 1$, and the
Assumptions \ref{ass:crnerg3} are satisfied. Then, if $\underline
V^N(0)\mathop{\Longrightarrow}\limits^{N\to\infty} \underline V(0)$, the process of
rescaled amounts of the slow species $\underline V^N_s$ converges
weakly to the solution $\underline V_s$ of (\ref{eq:crnlimit-ts})
with rates given by (\ref{eq:averates-final}) in the Skorokhod
topology.
\end{lemma}

 The proof of Lemma~\ref{lem:crnlimit-ts3} is along the same
lines as the proof of Lemma~\ref{lem:crnlimit-ts}, this time applying
the stochastic averaging twice; see \citet{Kang2012} for the same
approach.

\subsubsection*{Conserved quantities}
We turn now to the problem of \textit{conserved quantities}. Suppose we
have a two-scale reaction network with $\operatorname{dim}(\mathcal
N(({\underline{\underline\zeta}}^f)^{\textsc t} ))=:n^f>0$. Then
there exist
linearly independent $\mathbb{R}$-valued vectors $\underline
\theta^{c_i}=(\theta^{c_i}_1,\ldots,\break \theta^{c_i}_{|\mathcal I^f|})$,
$i=1,\ldots,n^f$ such that $t\mapsto\langle\underline\theta^{c_i},
\underline V_{f|\underline v_s}(t)\rangle$ where $\underline
V_{f|\underline v_s}$ from \eqref{eq:crnlimit-fast} is constant. In
other words, the change of $\langle\underline\theta^{c_i},
\underline V^N_f(t)\rangle$ on the time scale $N  \,dt$ goes to 0. We
set $\Theta^f := (\underline\theta^{c_i})_{i=1,\ldots,n^f}$, that is,
%
\begin{equation}
\label{eq:Thetaf} \mathcal N \bigl( \bigl({\underline{\underline
\zeta}}^f \bigr)^{\textsc t} \bigr) = \operatorname {span} \bigl(
\Theta^f \bigr)
\end{equation}
and note that the construction implies that $\underline\theta^{c_i}$
has a unique parameter $\alpha_i$ associated with it, which we denote
by $\alpha_{c_i}$, $i=1,\ldots,|\Theta^f|$ (all the species in the
support of $\underline\theta^{c_i}$ must have the same scaling
parameter $\alpha_{c_i}$, as any species with a greater value of the
scaling parameter does not effectively contribute to the conservation
law in the limit).

We assume that the effective changes for these combinations are on the
time scale $dt$, that is, $\sup_{k\in\mathcal K\dvtx \langle
\underline\theta^{c_i}, \underline\zeta_{\cdot,k}\rangle\neq
0}\beta_k \leq\alpha_{c_i}$. In other words, we exclude the
possibility that they create a new time scale, or that they
effectively remain constant as then we do not need to worry about
their dynamics. This will be all we need for our main results on the
compartment model of multi-scale reaction networks. If they change on
the time scale $dt$, we need to consider their behavior together with
that of the slow species.

We let $\underline V^N_c=(V_{c_i}^N)_{i=1,\ldots,|\Theta^f|}$ be the
vector of rescaled conserved quantities. For each
$i=1,\ldots,|\Theta^f|$, let $\mathcal K_{\underline\theta^{c_i}}$ be
the set of slow reactions such that $\beta_k=\alpha_{c_i}$ and
$\langle\underline\theta^{c_i}, \underline\zeta_{\cdot k}\rangle
\neq
0$, and
let $\mathcal K^c:=\bigcup_{i=1}^{|\Theta^f|}\mathcal
K_{\underline\theta^{c_i}}$. Note that $\mathcal K^c \cap\mathcal K^f
= \varnothing$ by construction. Let $\underline{\underline\zeta}^c$ be
the matrix with $|\Theta^f|$ rows and $|\mathcal K^c|$ columns defined
by
%
\begin{equation}
\label{eq:cons-zeta} \underline{\underline\zeta}^c_{\underline
\theta^{c_i}k}=\lim
_{N\to\infty}N^{-\alpha_{c_i}}N^{\beta
_k} \bigl\langle \underline
\theta^{c_i}, \underline \zeta_{\cdot k} \bigr\rangle,\qquad i =1,
\ldots,| \Theta^f|, k\in\mathcal K_i^c.
\end{equation}
Let $\Theta^f_\circ\subseteq\Theta^f$ be the subset of conserved
quantities for which $\alpha_{c_i}=0$, $\mathcal
K^c_\circ=\bigcup_{\underline\theta^{c_i}\in\Theta^f_\circ
}\mathcal
K_{\underline\theta^{c_i}}$. Let $\Theta^f_\bullet\subseteq\Theta^f$
be the subset of conserved quantities for which $\alpha_{c_i}>0$,
$\mathcal
K^c_\bullet=\bigcup_{\underline\theta^{c_i}\in\Theta^f_\bullet
}\mathcal
K_{\underline\theta^{c_i}}$. As before, $\mathcal K^c_\circ$ and
$\mathcal K^c_\bullet$ are disjoint.

We extend our results, under obvious modifications of our
earlier assumptions below. Note that the dynamics of
conserved quantities depends on that of the fast species in the same
way as the dynamics of the slow species does.

\begin{assumption}[(Dynamics of a two-scale reaction network with
conserved quantities)] The two-scale reaction network
(\ref{eq:two-scale}) with effective change (\ref{eq:fast-zeta}) on
time scale $N  \,dt$ and (\ref{eq:slow-zeta}) and
(\ref{eq:cons-zeta}) on time scale $dt$ satisfies the following
conditions:
\label{ass:crnerg-cons}
\begin{longlist}[(iii)]
\item[(i)] For each $(\underline v_s;\underline
v_c)\in\mathbb{R}_+^{|\mathcal I^s|+|\Theta^f|}$, $\underline v_c
:= (v_{c_i})_{i=1,\ldots,|\Theta^f|}$, there exists a well-defined
process that is the solution of
%
\begin{eqnarray*}
\underline V_{f|(\underline v_s;\underline v_c)}(t)& =& \underline V_{f|(\underline v_s;\underline v_c)}(0)\\
&&{} + \sum
_{k\in\mathcal K^f_\circ} \underline\zeta^f_{\cdot k}
Y_{k} \biggl( \int_0^t
\lambda_{k}^{\mathrm{CR}} \bigl(\underline V_{f|(\underline v_s;\underline v_c)}(u),
\underline v_s \bigr) \,du \biggr)
\\
&&{}+\sum_{k\in\mathcal K^f_\bullet} \underline\zeta^f_{\cdot k}
\int_0^t \lambda_{k}^{\mathrm{CR}}
\bigl(\underline V_{f|(\underline
v_s;\underline v_c)}(u),\underline v_s \bigr) \,du,
\end{eqnarray*}
which satisfies the constraints
%
\begin{equation}
\label{eq:constraints1} \bigl\langle\underline\theta^{c_i}, \underline
V_{f|(\underline
v_s; \underline v_c)} \bigr\rangle= v_{c_i},\qquad \theta^{c_i}\in
\Theta^f,
\end{equation}
and which has a unique stationary probability measure
$\mu_{(\underline v_s;\underline v_c)}(d\underline z)$ on
$\mathbb{R}_+^{|\mathcal I^f|}$ [concentrated on the linear
subspace such that \eqref{eq:constraints1} is satisfied], such
that
%
\begin{equation}
\label{eq:averates-cons} \bar\lambda_{k}^{\mathrm{CR}}(\underline
v_s, \underline v_c) = \int_{\mathbb{R}_+^{|\mathcal I^f|}}
\lambda_{k}^{\mathrm{CR}}( \underline z,\underline v_s,
\underline v_c) \mu_{(\underline v_s;\underline v_c)}(d\underline z)<\infty,\qquad k\in
\mathcal K^s.
\end{equation}
\item[(ii)] In addition to a well-defined solution of
(\ref{eq:crnlimit-ts}), there exists a well-defined process that
is the solution of
%
\begin{eqnarray}
\label{eq:crnlimit-cons} %
\underline V_c(t) & =&
\underline V_c(0) + \sum_{k\in\mathcal
K^c_\circ} \underline
\zeta^c_{\cdot k} Y_{k} \biggl( \int
_0^t \bar\lambda_{k}^{\mathrm{CR}}
\bigl(\underline V_s(u), \underline V_c(u) \bigr) \,du
\biggr)
\nonumber
\\[-8pt]
\\[-8pt]
\nonumber
&&{} +\sum_{k\in\mathcal
K^c_\bullet} \underline\zeta^c_{\cdot k}
\int_0^t \bar\lambda_{k}^{\mathrm{CR}}
\bigl(\underline V_s(u), \underline V_c(u) \bigr) \,du,
\end{eqnarray}
that is,
%
\begin{eqnarray}
\label{eq:crnlimit-cons2} %
V_{c_i}(t) & =&
V_{c_i}(0) + \sum_{k\in\mathcal K^c_\circ}
\zeta^c_{\underline\theta^{c_i} k} Y_{k} \biggl( \int
_0^t \bar\lambda_{k}^{\mathrm{CR}}
\bigl(\underline V_s(u), \underline V_c(u) \bigr) \,du
\biggr)
\nonumber
\\[-8pt]
\\[-8pt]
\nonumber
&&{} + \sum_{k\in\mathcal
K^c_\bullet} \zeta^c_{\underline\theta^{c_i} k}
\int_0^t \bar\lambda_{k}^{\mathrm{CR}}
\bigl(\underline V_s(u), \underline V_c(u) \bigr) \,du,
\end{eqnarray}
where the rates in both \eqref{eq:crnlimit-ts} and
\eqref{eq:crnlimit-cons}, \eqref{eq:crnlimit-cons2} are given by
(\ref{eq:averates-cons}).
\item[(iii)] Same as in Assumptions \ref{ass:crnerg}.
\end{longlist}
\end{assumption}

The following Lemma~\ref{lem:crnlimit-cons} is again in Theorem~5.1 of \citet{KangKurtz2013}.

\begin{lemma}[(Convergence of two-scale reaction networks with conserved fast
quantities)]
Let \label{lem:crnlimit-cons} $\underline V^N$ be the
process of rescaled species amounts \eqref{eq:crn-ss} for a
two-scale reaction network, with $\underline\alpha, \underline
\beta$ satisfying (\ref{eq:two-scale}), $\gamma=0,\varepsilon=1$,
and with conserved quantities $\Theta^f = (\underline
\theta^{c_i})_{i=1,\ldots,|\Theta^f|}$ [which is a basis of the null
space of $((\zeta_{ik})_{i\in\mathcal I^f, k\in\mathcal
K^f})^{\textsc t}$]
whose effective change is on time scale $dt$,
with Assumptions \ref{ass:crnerg-cons} satisfied. Then, if
$\underline V^N(0)\mathop{\Longrightarrow}\limits^{N\to\infty} \underline V(0)$, we have
joint convergence of the process of rescaled amounts of the slow and
conserved quantities $(\underline V^N_s (\cdot), \underline V^N_c
(\cdot))\mathop{\Longrightarrow}\limits^{N\to\infty} (\underline V_s(\cdot),
\underline
V_c(\cdot))$ in the Skorohod topology, with $\underline V_s$ the
solution of (\ref{eq:crnlimit-ts}) and $\underline V_c$ the solution
of (\ref{eq:crnlimit-cons}) with rates given by
(\ref{eq:averates-cons}).
\end{lemma}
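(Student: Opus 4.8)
The plan is to follow the proof of Lemma~\ref{lem:crnlimit-ts} very closely, treating the pair $(\underline V^N_s,\underline V^N_c)$ of slow species and rescaled conserved quantities jointly as the ``observed'' component, the only new feature being that the fast subsystem now equilibrates conditionally on \emph{both} $\underline v_s$ and the conserved quantities $\underline v_c$. First I would let $\Gamma^N$ be the occupation measure of $\underline V^N$ on $\mathbb R_+^{|\mathcal I|}\times[0,\infty)$; condition (iii-a) of Assumption~\ref{ass:crnerg-cons} makes $\{\Gamma^N\}$ relatively compact, and along a subsequence $\int_0^\cdot g(\underline V^N(u))\,du\Rightarrow\int g\,d\Gamma$ for every continuous $g$ with $\sup_{|x|>K}|g(x)|/\psi(x)\to0$ (Lemma~2.11 of \cite{KangKurtzPopovic2012}), with $\Gamma$ the corresponding limit point. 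Using (iii-a)--(iii-b) together with the facts that the slow species change at rate $\mathcal O(1)$ and, because $\sup_{k\,:\,\langle\underline\theta^{c_i},\underline\zeta_{\cdot k}\rangle\neq0}\beta_k\le\alpha_{c_i}$, so do the conserved quantities, the arguments of Lemmas~\ref{lem:crnlimit-ss} and~\ref{lem:crnlimit-ts} give the compact containment condition (and tightness) of $(\underline V^N_s,\underline V^N_c)$, so that along a further subsequence $(\underline V^N_s,\underline V^N_c)$ has a weak limit point $(\widetilde{\underline V}_s,\widetilde{\underline V}_c)$.

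The core is two applications of the stochastic averaging theorem of \cite{Kurtz1992}. On the slow scale, for $g\in\mathcal C^1_c$ a function of $(\underline v_s,\underline v_c)$ the process $g(\underline V^N_s(t),\underline V^N_c(t))-\int_0^t L^Ng(\underline V^N(u))\,du$ is a martingale, with $L^N$ as in \eqref{eq:gen1}. Computing $L^Ng$ on such test functions and using \eqref{eq:two-scale}, \eqref{eq:slow-zeta}, \eqref{eq:cons-zeta}, the assumption on the conserved quantities, \eqref{ass:cons}, and (iii-a)--(iii-b), one finds that the time integral of $L^Ng$ differs in $L^1$ by $o(1)$ from that of
\begin{align*}
  L_{s,c}g(\underline v) &= \sum_{k\in\mathcal K^s_\circ}\lambda_k^{\text{CR}}(\underline v)\big[g(\underline v_s+\underline\zeta^s_{\cdot k},\underline v_c)-g(\underline v_s,\underline v_c)\big] + \sum_{k\in\mathcal K^s_\bullet}\lambda_k^{\text{CR}}(\underline v)\,\partial_{\underline v_s}g(\underline v_s,\underline v_c)\cdot\underline\zeta^s_{\cdot k}\\
  &\quad + \sum_{k\in\mathcal K^c_\circ}\lambda_k^{\text{CR}}(\underline v)\big[g(\underline v_s,\underline v_c+\underline\zeta^c_{\cdot k})-g(\underline v_s,\underline v_c)\big] + \sum_{k\in\mathcal K^c_\bullet}\lambda_k^{\text{CR}}(\underline v)\,\partial_{\underline v_c}g(\underline v_s,\underline v_c)\cdot\underline\zeta^c_{\cdot k},
\end{align*}
the clean separation of slow- and conserved-quantity contributions being exactly what $\mathcal K^c\cap\mathcal K^s=\emptyset$ provides (it also keeps the Poisson drivers of the discrete parts of $\underline V_s$ and $\underline V_c$ distinct, as in \eqref{eq:crnlimit-ts}, \eqref{eq:crnlimit-cons2}). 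The averaging theorem then gives that $M^g(t)=g(\widetilde{\underline V}_s(t),\widetilde{\underline V}_c(t))-\int_{\mathbb R_+^{|\mathcal I|}\times[0,t]}L_{s,c}g(\underline v)\,\Gamma(d\underline v\times du)$ is a martingale.

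On the fast scale, for $g\in\mathcal C^1_c$ of all species, $\tfrac1N\big(g(\underline V^N(t))-g(\underline V^N(0))-\int_0^tL^Ng(\underline V^N(u))\,du\big)$ converges to $-\int_{\mathbb R_+^{|\mathcal I|}\times[0,t]}L_{f\mid(\underline v_s;\underline v_c)}g(\underline v)\,\Gamma(d\underline v\times du)$, where $L_{f\mid(\underline v_s;\underline v_c)}$ is the generator of the constrained fast process of Assumption~\ref{ass:crnerg-cons}(i); by uniform integrability of the left-hand side the limit is a continuous finite-variation martingale, hence $0$. Because each $\underline\theta^{c_i}$ lies in $\mathcal N((\underline{\underline\zeta}^f)^t)$ (see \eqref{eq:Thetaf}), no fast-subsystem move changes $\underline v_c$, so on this scale $\underline v_c$ is frozen together with $\underline v_s$; condition (i) of Assumption~\ref{ass:crnerg-cons} and the disintegration argument of Example~2.3 in \cite{Kurtz1992} then yield $\Gamma(d\underline v\times du)=\mu_{(\underline v_s;\underline v_c)}(d\underline v_f)\,\Gamma^f(d(\underline v_s,\underline v_c)\times du)$, with $\mu_{(\underline v_s;\underline v_c)}$ supported on $\{\underline z:\langle\underline\theta^{c_i},\underline z\rangle=v_{c_i}\text{ for all }i\}$ (cf.\ \eqref{eq:constraints1}) and $\Gamma^f(d(\underline v_s,\underline v_c)\times du)=\delta_{(\widetilde{\underline V}_s(u),\widetilde{\underline V}_c(u))}(d(\underline v_s,\underline v_c))\,du$. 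Plugging this into $M^g$ and integrating the rates against $\mu_{(\underline v_s;\underline v_c)}$ as in \eqref{eq:averates-cons} turns the martingale into $g(\widetilde{\underline V}_s(t),\widetilde{\underline V}_c(t))-\int_0^t\bar L_{s,c}g(\widetilde{\underline V}_s(u),\widetilde{\underline V}_c(u))\,du$, where $\bar L_{s,c}$ equals $L_{s,c}$ with $\lambda_k^{\text{CR}}$ replaced by $\bar\lambda_k^{\text{CR}}(\underline v_s,\underline v_c)$. Condition (ii) of Assumption~\ref{ass:crnerg-cons} identifies $(\widetilde{\underline V}_s,\widetilde{\underline V}_c)$ as the unique solution of this well-posed martingale problem, equivalently of the time-change system \eqref{eq:crnlimit-ts}, \eqref{eq:crnlimit-cons} (i.e.\ \eqref{eq:crnlimit-cons2}); since the limit is thus independent of the subsequence, $(\underline V^N_s,\underline V^N_c)\xRightarrow{N\to\infty}(\underline V_s,\underline V_c)$ in the Skorohod topology.

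I expect the main obstacle to be the fast-scale step: one has to verify that on the time scale $N\,dt$ the conserved quantities stay frozen \emph{together with} the slow species, so that the limiting occupation measure disintegrates against the correctly \emph{constrained} equilibrium $\mu_{(\underline v_s;\underline v_c)}$ --- a probability measure living on the lower-dimensional affine slice \eqref{eq:constraints1} of $\mathbb R_+^{|\mathcal I^f|}$. This is precisely where the characterization of $\Theta^f$ as a basis of $\mathcal N((\underline{\underline\zeta}^f)^t)$, the assumption that the effective change of the conserved quantities occurs only on time scale $dt$, and \eqref{ass:cons} are all used; making the averaging theorem apply to a fast process constrained to such a slice (and checking its ergodicity there) is the technical heart, and is exactly what Assumption~\ref{ass:crnerg-cons}(i) postulates.
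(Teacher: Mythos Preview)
Your proposal is correct and follows essentially the same route as the paper's proof: both adapt Lemma~\ref{lem:crnlimit-ts} by applying the martingale/averaging machinery to test functions of the pair $(\underline v_s,\underline v_c)$, arrive at the same limiting operator $L_{s,c}$ (with the separation \eqref{ass:cons} providing the clean split), identify $\Gamma$ via the constrained equilibrium $\mu_{(\underline v_s;\underline v_c)}$ from Assumption~\ref{ass:crnerg-cons}(i), and conclude by well-posedness in (ii). Your write-up is in fact slightly more explicit than the paper's about why the conserved quantities freeze on the fast scale and how the disintegration of $\Gamma$ works.
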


It is clear that the result on the limiting dynamics of the conserved
quantities which change on the time scale $dt$ holds even if we do not
have any slow species on this time scale.
We then only have the dynamics of conserved quantities following
(\ref{eq:crnlimit-cons2}) with the rates $\bar\lambda_k^{\mathrm{CR}}$
obtained using the stationary probability measure for the fast species
$\mu_{\underline v_c}(\cdot)$ which depends on the conserved
quantities only. Analogously, it is possible that the dynamics of
conserved species on time scale $dt$ is trivial in which case we have
the dynamics of slow quantities following (\ref{eq:crnlimit-ts}) with
$\underline V_c(u)=\underline v_c, u>0$. Furthermore, if we have a
reaction network on three scales, it is obvious how to write the
analogous result for the conserved quantities on whichever slower time
scale their dynamics is. Both of these situations appear in the
dynamics of compartment reaction network models, and the above lemmas
provide all the tools we need for our results on models with movement
between compartments.

\begin{example}[(Michaelis--Menten kinetics)]
One\label{ex:michmen} of the simplest multi-scale reaction systems
with conserved quantities on the fast time scale leads to the
well-known Michaelis--Menten kinetics. A
{substrate} $S$ is transformed into a {product} $P$ with
the help of an {enzyme} $E$ via a {complex} $ES$ formed by
enzyme and substrate. The set of reactions is
%
\begin{eqnarray}
\label{eq:michmen1} %
\mathfrak1\dvtx & &\quad E +S
\mathop{\xrightarrow}^{\kappa_{\mathfrak1}'} ES,\nonumber
\\
\mathfrak{-1}\dvtx & &\quad ES  \mathop{\xrightarrow}^{\kappa_{\mathfrak{-1}}'}
E+S,
\\
\mathfrak2\dvtx & &\quad ES  \mathop{\xrightarrow}^{\kappa_{\mathfrak2}'} E+P.\nonumber
\end{eqnarray}
The sum of numbers of free and bound enzymes $E,ES$ is a constant,
which will be denoted $m$ (also the sum of numbers $S, ES, P$
molecules is a constant but $S,P$ will be more abundant and they
will not effectively contribute to a conserved quantity on the fast
time scale). We denote molecules numbers by $x_S, x_E, x_{ES}$ and
$x_P$, let $\underline x = (x_S, x_E, x_{ES}, x_P)$ and let the
reaction rates be given by mass action kinetics,
%
\begin{equation}\quad
\label{eq:michmen2} %
 \Lambda_{\mathfrak1}^{\mathrm{CR}}(
\underline x)  = \kappa_{\mathfrak1}' x_S
x_E, \qquad \Lambda_{\mathfrak{-1}}^{\mathrm{CR}}(\underline x) =
\kappa_{\mathfrak{-1}}' x_{ES}, \qquad \Lambda_{\mathfrak
2}^{\mathrm{CR}}(
\underline x)  = \kappa_{\mathfrak2}' x_{ES}.
\end{equation}
For the scaled system, we use $\alpha_S = \alpha_{P}=1, \alpha_{E} =
\alpha_{ES} =0$. Setting the rescaled species counts $v_S =
N^{-1}x_S, v_E = x_E, v_{ES} = x_{ES}, v_P = N^{-1}x_P$, and
%
\begin{eqnarray}
\beta_{\mathfrak1}& =& 0,\qquad \beta_{\mathfrak{-1}} =
1,\qquad \beta_{\mathfrak2} = 1,\label{eq:gene2}
\nonumber
\\[-8pt]
\\[-8pt]
\nonumber
\kappa_{\mathfrak1}& =& \kappa_{\mathfrak1}',\qquad
\kappa_{\mathfrak{-1}} = N^{-1} \kappa_{\mathfrak{-1}}',\qquad
\kappa_{\mathfrak2} = N^{-1} \kappa_{\mathfrak2}',
\label{eq:gene31}
\end{eqnarray}
we write
\begin{eqnarray*}
\lambda_{\mathfrak1}^{\mathrm{CR}}(\underline v)  = \kappa_1
v_S v_E, \qquad\lambda_{\mathfrak{-1}}^{\mathrm{CR}}(
\underline v) = \kappa_{-1} v_{ES},\qquad \lambda_{\mathfrak
2}^{\mathrm{CR}}(
\underline v)  = \kappa_2 v_{ES}.
\end{eqnarray*}
Note that the rescaling gives that $V^N_S + V^N_P = O(1)$, such that
we only need to describe $V_S^N$. The process $\underline V^N =
(V_S^N, V_E^N, V_{ES}^N)$ is given by
\begin{eqnarray*}
V_S^N(t) & =& V_S^N(0) -
N^{-1} Y_{\mathfrak1} \biggl( N \int_0^t
\kappa_{\mathfrak1} V^N_S(u) V^N_E(u)
\,du \biggr)
\\
&&{} + N^{-1}Y_{\mathfrak{-1}} \biggl( N \int_0^t
\kappa_{\mathfrak{-1}} V^N_{ES}(u) \,du \biggr),
\\
V_E^N(t) & =& V_E^N(0) -
Y_{\mathfrak1} \biggl( N \int_0^t
\kappa_{\mathfrak1} V^N_S(u) V^N_E(u)
\,du \biggr) + Y_{\mathfrak{-1}} \biggl( N \int_0^t
\kappa_{\mathfrak{-1}} V^N_{ES}(u) \,du \biggr)
\\
& &{}+ Y_{\mathfrak{2}} \biggl( N \int_0^t
\kappa_{\mathfrak{2}} V^N_{ES}(u) \,du \biggr),
\\
V_{ES}^N(t) & =& V_{ES}^N(0) +
Y_{\mathfrak1} \biggl( N \int_0^t
\kappa_{\mathfrak1} V^N_S(u) V^N_E(u)
\,du \biggr) - Y_{\mathfrak{-1}} \biggl( N \int_0^t
\kappa_{\mathfrak{-1}} V^N_{ES}(u) \,du \biggr)
\\
& &{}- Y_{\mathfrak{2}} \biggl( N \int_0^t
\kappa_{\mathfrak{2}} V^N_{ES}(u) \,du \biggr).
\end{eqnarray*}
From this, we see $V_E, V_{ES}$ are fast while $V_S, V_P$ are slow
species. For $\gamma=0$, $\varepsilon=1$, we have $\mathcal K^f
=\mathcal K^s = \{\mathfrak1, \mathfrak{-1}, \mathfrak2\}$,
$\mathcal K_{S} = \{\mathfrak1, \mathfrak{-1}\}$, $\mathcal K_E =
\mathcal K_{ES} = \{\mathfrak1, \mathfrak{-1}, \mathfrak2\}$,
$\mathcal K_P = \{\mathfrak2\}$ and $\mathcal I_f = \mathcal
I_f^\circ= \{E, ES\}$, $\mathcal I_s =\mathcal I_s^\bullet= \{S,
P\}$. The matrices describing the reaction dynamics on both scales
are
%
\begin{eqnarray}
\label{eq:MM4} \underline{\underline\zeta} &= &%
\matrix{ S
\cr
E
\cr
ES
\cr
P} %
\pmatrix{ -1 & 1 & 0
\cr
-1 & 1 & 1
\cr
1 & -1 & -1
\cr
0 & 0 &
1 } %
, \qquad\underline{\underline\zeta}^f = %
\matrix{
E
\cr
ES } %
\pmatrix{ -1 & 1 & 1
\cr
1 & -1 & -1} %
,
\nonumber
\\[-8pt]
\\[-8pt]
\nonumber
\underline{\underline\zeta}^s &=& %
\matrix{ S
\cr
P }
\pmatrix{ -1 & 1 & 0
\cr
0 & 0 & 1},
\end{eqnarray}
where the three columns give reactions $\mathfrak1, \mathfrak{-1}$
and $\mathfrak2$. Note that $\mathcal
N((\underline{\underline\zeta}^f)^{t}) = \operatorname{span}((1,1))$,
indicating that $V_{c} := V_E+V_{ES}$ is constant (at least on the
fast time scale). In order to study the dynamics of the slow
species, $\underline V_s :=(V_S, V_P)$, we have to check
Assumption~\ref{ass:crnerg-cons} and apply
Lemma~\ref{lem:crnlimit-cons}. Here, for Poisson processes
$Y_{\mathfrak1}$ and $Y_{\mathfrak{-1}}$, and fixed $\underline V_s
= (V_S, V_P) = (v_S, v_P) = \underline v_s$ and $V_c = V_E+V_{ES} =:
m$, from \eqref{eq:crnlimit-fast},
\begin{eqnarray*}
&&V_{E|(\underline v_s, v_c)}(t) - V_{E|(\underline v_s,
v_c)}(0)
\\
&&\qquad = - Y_{\mathfrak1} \biggl( \int_0^t
\kappa _{\mathfrak1} v_S V_E(u) \,du \biggr) +
Y_{\mathfrak{-1}} \biggl( \int_0^t
\kappa_{\mathfrak{-1}} V_{ES}(u) \,du \biggr)\\
&&\qquad\quad{} + Y_{\mathfrak{2}} \biggl(
\int_0^t \kappa_{\mathfrak{2}}
V_{ES}(u) \,du \biggr)
\\
&&\qquad \stackrel d = - Y_{\mathfrak1} \biggl( \int_0^t
\kappa_{\mathfrak1} v_S V_E(u) \,du \biggr) +
Y_{\mathfrak{-1}+
\mathfrak2} \biggl( \int_0^t (
\kappa_{\mathfrak{-1}} + \kappa_{\mathfrak2})V_{ES}(u) \,du \biggr),
\\
&&V_{ES|(\underline v_s, v_c)}(t) - V_{ES|(\underline v_s,
v_c)}(0)
\\
& &\qquad= Y_{\mathfrak1} \biggl( \int_0^t
\kappa_{\mathfrak1} v_S V_E(u) \,du \biggr) +
Y_{\mathfrak{-1} +\mathfrak2} \biggl( \int_0^t (
\kappa_{\mathfrak{-1}} + \kappa_{\mathfrak2})V_{ES}(u) \,du \biggr)
\end{eqnarray*}
for some Poisson process $Y_{\mathfrak{-1}+ \mathfrak2}$
which is independent of $Y_{\mathfrak1}$. Note that
$(V_{E|(\underline v_s, v_c)}(\cdot), V_{ES|(\underline v_s,
v_c)}(\cdot))$ behaves like an Ehrenfest
urn with two compartments, where each $E$ turns to $ES$ at rate
$\kappa_{\mathfrak1} v_S$, and each $ES$ turns to $E$ at rate
$(\kappa_{\mathfrak{-1}} + \kappa_{\mathfrak2})$. It is well known
that $(V_{E|(\underline v_s, v_c)},
V_{ES|(\underline v_s, v_c)}) \stackrel d = (X, m-X)$ has an equilibrium
\[
X\sim\operatorname{Binom} \biggl(m,\frac{\kappa_{\mathfrak{-1}} +
\kappa_{\mathfrak2}}{\kappa_{\mathfrak{-1}} + \kappa_{\mathfrak
2} + \kappa_{\mathfrak1}v_S} \biggr),
\]
which gives the desired $\mu_{(\underline v_s, m)}(dv_{E}, dv_{ES})$
concentrated on $V_c=V_E + V_{ES}=m$. The
conserved species $V_c$ do not change even on the time scale $dt$
(this will no longer be the case in a heterogeneous compartment
model in the next section). Hence, \eqref{eq:averates-cons} gives
\begin{eqnarray*}
\bar\lambda_{\mathfrak1}(\underline v_s) & =& \mathbf E[
\kappa_{\mathfrak1} v_S X] = \kappa_{\mathfrak1}
v_S \frac{m
(\kappa_{\mathfrak{-1}} + \kappa_{\mathfrak
2})}{\kappa_{\mathfrak{-1}} +
\kappa_{\mathfrak2} + \kappa_{\mathfrak1}v_S},
\\
\bar\lambda_{\mathfrak{-1}}(\underline v_s) & =& \mathbf E \bigl[
\kappa_{\mathfrak{-1}} (m-X) \bigr] = \kappa_{\mathfrak{-1}} \frac{m
\kappa_{\mathfrak1}v_S}{\kappa_{\mathfrak{-1}} +
\kappa_{\mathfrak2} + \kappa_{\mathfrak1}v_S},
\\
\bar\lambda_{\mathfrak{2}}(\underline v_s) & = &\mathbf E \bigl[
\kappa_{\mathfrak{2}} (m-X) \bigr] = \kappa_{\mathfrak{2}} \frac{m
\kappa_{\mathfrak1}v_S}{\kappa_{\mathfrak{-1}} +
\kappa_{\mathfrak2} + \kappa_{\mathfrak1}v_S}.
\end{eqnarray*}
Lemma~\ref{lem:crnlimit-cons} implies that in the limit
$N\to\infty$, we obtain the dynamics
%
\begin{eqnarray} \label{eq:michmenVS}
V_S(t) & =& V_S(0) - \int
_0^t \bar\lambda_{\mathfrak1} \bigl(
\underline V_s(u) \bigr)\,du + \int_0^t
\bar\lambda_{\mathfrak{-1}} \bigl(\underline V_s(u) \bigr)\,du
\nonumber
\\[-8pt]
\\[-8pt]
\nonumber
& =& V_S(0) - \int_0^t
\frac{m\kappa_1 \kappa_2
V_S(u)}{\kappa_{\mathfrak{-1}} + \kappa_{\mathfrak2} +
\kappa_{\mathfrak1}V_S(u)}\,du, 
\end{eqnarray}
for $V_S$, 
which is the classical Michaelis--Menten kinetics.
\end{example}

\section{Chemical reactions in multiple compartments}
\label{S:3}
We now assume that the chemical system is separated into
a set of $\mathcal D$ \textit{compartments}, and chemical species can migrate
within these compartments. For species $i\in\mathcal I$, \textit{movement}
happens from compartment $d'$ to $d''$ at rate
$\Lambda_{i,d',d''}^{\mathrm{M}}$.

\subsection{The Markov chain model}
\label{S:31}
Denoting by $X_{id}(t)$ the number of mol\-ecules of species $i$ in
compartment $d$ at time $t$, we assume that $(\underline{\underline
X}(t))_{t\geq0}$ with $\underline{\underline X}(t) =
(X_{id}(t))_{i\in\mathcal I, d\in\mathcal D}$ is solution of
%
\begin{eqnarray}
\label{eq:crn0} %
X_{id}(t) & =&
X_{id}(0) + \sum_{k\in\mathcal K}
\zeta_{ik} Y_{kd} \biggl(\int_0^t
\Lambda_{kd}^{\mathrm{CR}} \bigl(\underline X_{\cdot
d}(u)
\bigr) \,du \biggr)
\nonumber
\\[-8pt]
\\[-8pt]
\nonumber
& &{}+ \sum_{d',d''\in\mathcal D} \bigl(\delta_{d''}(d) -
\delta_{d'}(d) \bigr) Y_{i,d',d''} \biggl( \int
_0^t \Lambda^{\mathrm{M}}_{i,d',d''}
X_{id'}(u)\,du \biggr),
\end{eqnarray}
where $\delta_{d}(\cdot)$ is a Dirac delta function, $\underline
X_{\cdot d} = (X_{id})_{i\in\mathcal I}$
and all the $Y_\cdot$'s are independent (rate 1) Poisson processes. We
assume the following.

\begin{assumption}[(Dynamics of un-scaled multi-compartment reaction
network)]The reaction network dynamics satisfies the following
conditions: \label{ass:31}
\begin{longlist}[(ii)]
\item[(i)] Same as Assumption~\ref{ass:21}(i) in each compartment
and for all $k$ there is at least one $d$ with
$\Lambda^{\mathrm{CR}}_{kd}\neq0$.
\item[(ii)] Given $(Y_{kd})_{k\in\mathcal K, d\in\mathcal
D}$, and $(Y_{i,d',d''})_{i\in\mathcal I, d',d''\in\mathcal D}$,
the time change equation~\eqref{eq:crn0} has a unique solution.
\end{longlist}
\end{assumption}

 For $X_i(t) := \sum_{d\in\mathcal D}
X_{id}(t)$,
\[
X_i(t)\stackrel d= X_i(0) + \sum
_{k\in\mathcal K} \zeta_{ik} Y_k \biggl(\int
_0^t \sum_{d\in\mathcal D}
\Lambda_{kd}^{\mathrm{CR}} \bigl(\underline X_{\cdot d}(u)
\bigr)\,du \biggr),
\]
%
for some independent (rate 1) Poisson processes $(Y_k)_{k\in\mathcal
K}$. However, since the rate $\sum_{d\in\mathcal
D}\Lambda_{kd}^{\mathrm{CR}}(\underline X_{\cdot d}(s))$ depends on
all entries in $\underline{\underline X}(s)$, the process
$((X_i(t))_{i\in\mathcal I})_{t\geq0}$ is not in general Markov.

\subsection{The rescaled system}
\label{S:32}
Consider the solution of \eqref{eq:crn0} with the chemical reaction
rates $\Lambda^{\mathrm{CR}}_{kd}$ and movement rates
$\Lambda^{\mathrm{M}}_{i,d',d''}$ replaced by
$\Lambda^{\mathrm{CR},N}_{kd}$ and $\Lambda^{\mathrm{M},N}_{i,d',d''}$,
respectively. For real-valued
\[
\underline\alpha= (\alpha_i)_{i\in\mathcal I},\qquad \underline
\beta= (\beta_k)_{k\in\mathcal K},\qquad \gamma,\qquad \underline\eta= (
\eta_i)_{i\in\mathcal I},
\]
with $\alpha_i\geq0, i\in\mathcal I$, we denote the $(\underline
\alpha, \underline\beta, \gamma, \underline\eta)$-rescaled system by
\begin{eqnarray*}
V_{id}^N(t) &:= &N^{-\alpha_i}
X_{id}^N \bigl(N^\gamma t \bigr),\qquad i\in\mathcal I,
d \in\mathcal D,
\\
\lambda^{\mathrm{CR},N}_{k}(\underline v)&:=&N^{-\beta_k}\Lambda
^{\mathrm{CR},N}_{k} \bigl( \bigl(N^{\alpha_i}v_i
\bigr)_{i\in\mathcal I} \bigr),\qquad k\in\mathcal K,
\\
\lambda^{\mathrm{M},N}_{i,d',d''}(\underline v)&:=&N^{-\eta_i}\Lambda
^{\mathrm{M},N}_{i,d',d''} \bigl( \bigl(N^{\alpha_i}v_i
\bigr)_{i\in\mathcal I} \bigr),\qquad i\in\mathcal I, d',d''
\in \mathcal D,
\end{eqnarray*}
where $\underline\alpha, \underline\beta, \gamma, \underline\eta
$ is
chosen so that $V_{id}^N = \mathcal O(1), \lambda^{\mathrm
{CR},N}_{k}=\mathcal O(1), \lambda^{\mathrm{M},N}_{i,d',d''}=\mathcal O(1)$
(reactions of the same type and species of the same type are scaled by
the same parameters in each compartment). Again, we will restrict to
the case $\gamma=0$.

\begin{assumption}[(Dynamics of scaled multiple compartment reaction
network)]
In addition to Assumption~\ref{ass:22} within each compartment,
there \label{ass:22sp} exist $\lambda^{\mathrm{M}}_{i,d',d''}$, $i \in
\mathcal I, d',d'' \in\mathcal D$ with
%
\begin{equation}
\label{eq:resc10sp} %
 N^{-\eta_i}\Lambda^{\mathrm{M},N}_{i,d',d''}
\mathop{\xrightarrow}^{N\to\infty} \lambda^{\mathrm{M}}_{i,d',d''}.
\end{equation}
Again, we will assume that this
convergence is actually an identity.
\end{assumption}

The $(\underline\alpha, \underline\beta, \gamma, \underline
\eta)$-rescaled system $\underline{\underline
V}^N(t)=N^{-\underline\alpha}\underline{\underline X}(N^\gamma t)$
is the unique solution to the system of stochastic equations
%
\begin{eqnarray}
\label{eq:crn1} %
V^N_{id}(t) &
= &V^N_{id}(0) + \sum_{k\in\mathcal K}
N^{-\alpha_i} \zeta_{ik}Y_{kd} \biggl(N^{\beta_k + \gamma}
\int_0^t \lambda_{kd}^{\mathrm{CR}}
\bigl(\underline V^N_{\cdot d}(u) \bigr) \,du \biggr)
\nonumber
\\[-8pt]
\\[-8pt]
\nonumber
&&{} + \sum_{d',d''\in\mathcal D} N^{-\alpha_i} \bigl(
\delta_{d''}(d) - \delta_{d'}(d) \bigr)Y_{i,d',d''}
\biggl( N^{\alpha_i+\eta_i+\gamma} \int_0^t
\lambda^{\mathrm{M}}_{i,d',d''} V^N_{id'}(s) \biggr).
\end{eqnarray}
In addition, define 
\begin{equation}
\label{eq:S} \underline S^N = \bigl(S_i^N
\bigr)_{i\in\mathcal I} \qquad\mbox{with }S_i^N := \sum
_{d\in\mathcal D}V_{id}^N,
\end{equation}
then $S_i^N$ solves
%
\begin{equation}
\label{eq:S2} S^N_{i}(t)  = S^N_{i}(0)
+ \sum_{k\in\mathcal K} N^{-\alpha_i} \zeta_{ik}
\sum_{d\in\mathcal D} Y_{kd} \biggl(N^{\beta_k + \gamma}
\int_0^t \lambda_{kd}^{\mathrm{CR}}
\bigl(\underline V^N_{\cdot d}(u) \bigr) \,du \biggr).
\end{equation}
%

\begin{remark}[(Heterogeneity of the reaction network)] Our set-up does
not preclude
the option that different compartments may have different reaction
networks all together.
We contain all possible reactions in the stoichiometric matrix~$\underline{\underline\zeta}$, then setting individual compartment
rates $\lambda^{\mathrm{CR}}_{kd}$
to zero in desired compartments can achieve this.
\end{remark}

\subsection{Spatial single-scale systems}
\label{Sec:33}

We can now examine the effect of heterogeneity on the chemical
reaction systems via compartmental models. We assume
that \eqref{eq:single-scale} holds within every compartment. The sets
$\mathcal I, \mathcal I_\circ, \mathcal I_\bullet$ and $\mathcal K,
\mathcal K_\circ^\ast, \mathcal K_\bullet^\ast$ and
$\underline{\underline\zeta}^\ast$ are used as in
Section~\ref{S:resultsSingle}. We assume that movement of species is
fast, $\eta_i>0, i\in\mathcal I$ and it has a unique equilibrium.

\begin{assumption}[(Equilibrium for movement)]\label{ass:migerg}
For each species $i\in\mathcal I$, the movement
Markov chain, given through the jump rates
$\lambda_{i,d',d''}^{\mathrm{M}}$ from $d'$ to $d''$, has a unique
stationary probability distribution 
denoted by
$(\pi_i(d))_{d\in\mathcal D}$.
\end{assumption}

\begin{lemma}[(Movement equilibrium)]
Let \label{l:spatial1} Assumption~\ref{ass:migerg} hold.
\begin{longlist}[(1)]
\item[(1)] Let $i\in\mathcal I$ be such that $\alpha_i=0$ (i.e.,
$i\in\mathcal I_\circ$). Consider the Markov chain of only the
movement of
molecules of species $i$, that is, the solution of
\begin{eqnarray*}
V_{id}(t) = V_{id}(0) + \sum_{d',d''\in\mathcal D}
\bigl(\delta_{d''}(d) - \delta_{d'}(d) \bigr)Y_{i,d',d''}
\biggl( \int_0^t \lambda^{\mathrm{M}}_{i,d',d''}
V_{id'}(u)\,du \biggr)
\end{eqnarray*}
started with $\sum_{d\in\mathcal D} V_{id}(0) = s_i$. Then, the
unique equilibrium probability distribution of this Markov chain is
given as
the multinomial distribution with parameters $(s_i;
(\pi_i(d))_{d\in\mathcal D})$.
\item[(2)] Let $i\in\mathcal I$ be such that $\alpha_i>0$ (i.e.,
$i\in\mathcal I_\bullet$). Consider the limiting deterministic
process of only the movement of molecules of species $i$, that is, the
solution of
\[
V_{id}(t) = V_{id}(0) + \sum_{d'\in\mathcal D}
\int_0^t \bigl(\lambda^{\mathrm{M}}_{i,d',d}
V_{id'}(u) - \lambda^{\mathrm{M}}_{i,d,d'}
V_{id}(u) \bigr)\,du
\]
started with $\sum_{d\in\mathcal D} V_{id}(0) = s_i$. Then the
unique equilibrium of this process is given by $(s_i
\pi_i(d))_{d\in\mathcal D}$.
\end{longlist}
We denote the equilibrium probability distribution of movement of all species,
started in $(s_i)_{i\in\mathcal I}$ by $\mathbf P_{\underline s}$
and by $\mathbf E_{\underline s}$ the corresponding expectation
operator. From the above, $\mathbf P_{\underline s}$ is a product of
multinomial and point mass distributions.
\end{lemma}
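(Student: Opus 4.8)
The plan is to exploit the fact that in both regimes the movement of species $i$ decouples into independent single-molecule motions, so that everything reduces to the ergodic theory of the single-particle movement chain with jump rates $\lambda^{\text{M}}_{i,\cdot,\cdot}$, whose unique stationary law is $(\pi_i(d))_{d\in\mathcal D}$ by Assumption~\ref{ass:migerg}.

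For part~1 (discrete species, $\alpha_i=0$): First observe that $\sum_{d\in\mathcal D}V_{id}(t)$ is invariant under every jump of the displayed equation, since each increment $(\delta_{d'',d}-\delta_{d',d})$ sums to zero over $d$; hence the chain lives on the finite set $\Sigma_{s_i}:=\{(n_d)_{d\in\mathcal D}:n_d\in\mathbb Z_+,\ \sum_d n_d=s_i\}$. Next I would identify this count chain with the occupation-count process of $s_i$ independent continuous-time Markov chains on $\mathcal D$, each with jump rates $\lambda^{\text{M}}_{i,d',d''}$: a transition $d'\to d''$ occurs at rate $\lambda^{\text{M}}_{i,d',d''}V_{id'}(t)$, which is exactly the superposition of $V_{id'}(t)$ independent rate-$\lambda^{\text{M}}_{i,d',d''}$ clocks. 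From this representation, stationarity of the multinomial law with parameters $(s_i;(\pi_i(d))_{d\in\mathcal D})$ is immediate: if the $s_i$ molecules are started i.i.d.\ from $\pi_i$, each stays distributed as $\pi_i$ for all times and they remain independent, so the count vector stays multinomial. For uniqueness I would use that a finite CTMC with a unique stationary law has a single closed communicating class, whence the one-particle semigroup satisfies $P^{(1)}_t(x,\cdot)\to\pi_i$ for every $x$; by independence the $s_i$-fold product semigroup converges to $\pi_i^{\otimes s_i}$, so the count chain converges to the multinomial law from every initial state in $\Sigma_{s_i}$, which forces that law to be the unique stationary distribution.

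For part~2 (continuous species, $\alpha_i>0$): The displayed ODE is linear and, collecting $(V_{id}(t))_{d\in\mathcal D}$ into a row vector, reads $\dot{\underline V}_{i\cdot}=\underline V_{i\cdot}Q_i$ where $Q_i$ is the generator matrix of the single-molecule movement chain ($Q_i$ has off-diagonal entries $\lambda^{\text{M}}_{i,d',d''}$ and vanishing row sums). This is precisely the Kolmogorov forward equation for that chain, with conserved total mass $s_i=\sum_d V_{id}(t)$ (again $\mathbf 1$ lies in the kernel of $Q_i^t$). The equilibria are the nonnegative solutions of $\underline v\,Q_i=0$ with $\sum_d v_d=s_i$; since $Q_i$ generates a chain with a unique stationary law, its left null space is one-dimensional and spanned by $\pi_i$, so the unique such equilibrium is $(s_i\pi_i(d))_{d\in\mathcal D}$, and it is attained as $t\to\infty$ because $e^{tQ_i}\to\mathbf 1\pi_i$.

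Finally, since the movement equations for distinct species are driven by independent Poisson processes $Y_{d',d'',i}$ and do not interact, the equilibrium law of the whole movement process factorizes over $i\in\mathcal I$; parts~1 and~2 identify each factor as either the multinomial law with parameters $(s_i;(\pi_i(d))_{d\in\mathcal D})$ (for $i\in\mathcal I_\circ$) or the point mass at $(s_i\pi_i(d))_{d\in\mathcal D}$ (for $i\in\mathcal I_\bullet$), which is the asserted product form of $\mathbf P_{\underline s}$. I do not expect a genuine obstacle here: the content is standard CTMC ergodic theory together with the elementary fact that indistinguishable independent walkers produce multinomial occupation counts. The one point that deserves a little care is the uniqueness assertion in part~1, which is why I would route it through convergence from arbitrary initial configurations rather than through a direct check of global balance.
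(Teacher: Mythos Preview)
Your argument is correct and self-contained. The paper takes a very different, essentially one-line route: it simply declares the lemma ``a simple consequence of Lemma~\ref{lem:crnlimit-cons} with sum $\underline S=\sum_{d\in\mathcal D}V_{id}$ the conserved species and $\mathcal K^s=\emptyset$.'' That is, it views the movement-only process as a degenerate two-scale network (fast movement, no slow reactions) with the totals as conserved quantities, and invokes the general averaging framework. This is economical but somewhat circular in spirit, since Assumption~\ref{ass:crnerg-cons}(i), which Lemma~\ref{lem:crnlimit-cons} presupposes, already asks for the existence and uniqueness of the stationary measure $\mu_{(\underline v_s;\underline v_c)}$---precisely the content of the present lemma. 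By contrast, you supply that content directly: the independent-particle representation for the discrete case and the Kolmogorov-forward reading of the linear ODE for the continuous case are the standard elementary arguments, and they actually \emph{identify} the equilibrium rather than postulating it. Your handling of uniqueness in part~1 via convergence of the product semigroup is the right way to close the gap that a finite CTMC with a unique stationary law may still have transient states; the same observation justifies $e^{tQ_i}\to\mathbf 1\pi_i$ in part~2.
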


\begin{pf} In (1), we have an Ehrehfest urn model with
$|\mathcal D|$ boxes; due its reversibility it is easy to check we
have the correct equilibrium. In (2), we have a deterministic
system of $|\mathcal D|$ equations whose equilibrium is equally easy
to obtain.
\end{pf}

We start with the simplest results for chemical reaction networks
which are on a single scale, and describe the effect of mixing on the
heterogeneous chemical reaction system.

\begin{assumption}[(Dynamics of the spatial single-scale reaction
network)]
The \label{ass:crnSpa1} spatial single-scale reaction network on
time scale $dt$, where Assumption~\ref{ass:migerg} holds, satisfies
the following conditions:
\begin{longlist}[(ii)]
\item[(i)] Given $(Y_k)_{k\in\mathcal
K^\ast_\circ}$, the time change equation
%
\begin{eqnarray}
\label{eq:T1a} \underline S(t)  &= &\underline S(0) + \sum
_{k\in\mathcal
K^*_\circ} \underline\zeta^*_{\cdot k} Y_k
\biggl( \int_0^t \bar\lambda_k^{\mathrm{CR}}
\bigl(\underline S(u) \bigr) \,du \biggr)
\nonumber
\\[-8pt]
\\[-8pt]
\nonumber
&&{}+ \sum_{k\in\mathcal K^*_\bullet}
\underline\zeta^*_{\cdot k} \int_0^t
\bar \lambda_k^{\mathrm{CR}} \bigl(\underline S(u) \bigr) \,du
\end{eqnarray}
has a unique solution $\underline S := (\underline S(t))_{t\geq
0}$, where
%
\begin{eqnarray}
\label{eq:T1b} \bar\lambda_k^{\mathrm{CR}}(\underline s) :=
\mathbf E_{\underline
s} \biggl[\sum_{d\in\mathcal
D}
\lambda_{kd}^{\mathrm{CR}}(\underline V_{\cdot d}) \biggr].
\end{eqnarray}
\item[(ii)] Same as (iii) in Assumption~\ref{ass:crnerg} for
all $d\in\mathcal D$.
\end{longlist}
\end{assumption}

\begin{theorem}[(Heterogeneous single-scale system)] \label{T1}
Let $\underline{\underline V}^N$ be the vector
process of
rescaled species amounts for the reaction network which is the
unique solution to \eqref{eq:crn1}. Assume that $(\underline\alpha,
\underline\beta, \gamma=0)$ satisfy single scale assumption \eqref
{eq:single-scale} within compartments and
$\eta_i=\eta>0$, $i\in\mathcal I$. Let $\underline S^N(t) =
(S_i^N(t))_{i\in\mathcal I}$ be $S_i^N(t) := \sum_{d\in\mathcal D}
V_{id}^N(t)$, and suppose Assumptions \ref{ass:migerg}
and \ref{ass:crnSpa1} for the rescaled network hold. If $\underline
S^N(0) \mathop{\Longrightarrow}\limits^{N\to\infty}
\underline S(0)$, then
the process of rescaled sums $\underline S^N(\cdot)$
converges weakly to the unique solution $\underline S(\cdot)$ of
\eqref{eq:T1a} in the Skorohod topology.
\end{theorem}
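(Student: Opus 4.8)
The plan is to recognize Theorem~\ref{T1} as an instance of the non-spatial two-scale result with conserved fast quantities, Lemma~\ref{lem:crnlimit-cons}, applied to an enlarged reaction network. First I would reformulate the compartment model \eqref{eq:crn1} as a (non-spatial) reaction network whose species are the pairs $(i,d)\in\mathcal I\times\mathcal D$ and whose reactions are: (a) the within-compartment chemical reactions, now indexed by $(k,d)$ with stoichiometric vectors $\underline{\underline\zeta}^d_{\cdot k}$ (the matrix with $d$-th column $\underline\zeta_{\cdot k}$ and zero otherwise); and (b) the movement reactions $(i,d')\to(i,d'')$ with vectors $\underline{\underline\zeta}^{\mathrm M}_{i,d',d''}$. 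The scaling exponents transfer as $\alpha_{(i,d)}:=\alpha_i$ and $\beta_{(k,d)}:=\beta_k$, while reading \eqref{eq:crn1} shows the movement reaction $(i,d')\to(i,d'')$ carries exponent $\alpha_i+\eta_i=\alpha_i+\eta$. Since $\eta>0$ and $\max_{k\in\mathcal K_i}\beta_k=\alpha_i$ by the within-compartment single-scale condition \eqref{eq:single-scale}, every species $(i,d)$ satisfies the \emph{fast} branch of \eqref{eq:two-scale} with $\varepsilon=\eta$; after rescaling $N$ so that $\eta=1$ we are in the two-scale setting with $\mathcal I^f=\mathcal I\times\mathcal D$ and $\mathcal I^s=\emptyset$.

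Next I would identify the fast subnetwork and its conserved quantities. Because $\beta_{(k,d)}=\alpha_i<\alpha_i+1$, no chemical reaction enters the fast submatrix $\underline{\underline\zeta}^f$ of \eqref{eq:fast-zeta}, so the fast subnetwork is precisely the collection of movement reactions. A vector $\underline\theta$ on $\mathcal I\times\mathcal D$ lies in $\mathcal N((\underline{\underline\zeta}^f)^t)$ iff $\theta_{(i,d'')}=\theta_{(i,d')}$ whenever $d',d''$ communicate for species $i$; irreducibility of each movement chain (Assumption~\ref{ass:migerg}) forces $\underline\theta$ constant in $d$, hence $\mathcal N((\underline{\underline\zeta}^f)^t)=\mathrm{span}\{\underline\theta^{S_i}:=\sum_{d\in\mathcal D}\underline e_{(i,d)}:i\in\mathcal I\}$ and $\underline V^N_c=\underline S^N$, with $\alpha_{c_i}=\alpha_i$. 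Since $\langle\underline\theta^{S_i},\underline{\underline\zeta}^d_{\cdot k}\rangle=\zeta_{ik}$, formula \eqref{eq:cons-zeta} together with \eqref{eq:zetaconv} gives $\zeta^c_{\underline\theta^{S_i},(k,d)}=\zeta^*_{ik}$, so $\underline{\underline\zeta}^c$ collapses to $\underline{\underline\zeta}^*$ and $\mathcal K^c=\{(k,d):k\in\mathcal K^*_i\text{ for some }i,\ d\in\mathcal D\}$; the single-scale condition \eqref{eq:single-scale} is exactly the statement that these conserved quantities change on time scale $dt$ and create no new time scale. As $\mathcal I^s=\emptyset$ there are no slow species and no $\mathcal K^s$, so the separation hypothesis \eqref{ass:cons} holds vacuously -- precisely the situation the remark following Lemma~\ref{lem:crnlimit-cons} notes is covered.

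It then remains to check Assumption~\ref{ass:crnerg-cons} and to translate the conclusion. Condition (i): for fixed $\underline v_c=\underline s$ the fast process is exactly the movement dynamics with totals $\underline s$ treated in Lemma~\ref{l:spatial1}, whose unique stationary law is the product $\mathbf P_{\underline s}$ of multinomials (over $i\in\mathcal I_\circ$) and point masses (over $i\in\mathcal I_\bullet$), and the averaged rates $\bar\lambda^{\mathrm{CR}}_{(k,d)}(\underline s)=\mathbf E_{\underline s}[\lambda_{kd}^{\mathrm{CR}}(\underline V_{\cdot d})]$ are finite by the Lyapunov hypothesis carried in Assumption~\ref{ass:crnSpa1}(ii). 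Conditions (ii) and (iii) are Assumption~\ref{ass:crnSpa1}(i) and (ii). Lemma~\ref{lem:crnlimit-cons} then yields $\underline S^N=\underline V^N_c\xRightarrow{N\to\infty}\underline V_c$, the solution of \eqref{eq:crnlimit-cons}; collapsing the independent Poisson processes $\{Y_{(k,d)}\}_{d\in\mathcal D}$ into a single $Y_k$ with combined intensity and adding the continuous contributions across $d$ (exactly as in Remark~\ref{rem:sums}) rewrites \eqref{eq:crnlimit-cons} as \eqref{eq:T1a} with $\bar\lambda_k^{\mathrm{CR}}(\underline s)=\mathbf E_{\underline s}[\sum_{d\in\mathcal D}\lambda_{kd}^{\mathrm{CR}}(\underline V_{\cdot d})]$ as in \eqref{eq:T1b}.

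I expect the only genuine work, beyond this bookkeeping, to live inside machinery already developed: relative compactness of the occupation measures $\Gamma^N$ of $\underline{\underline V}^N$ together with compact containment of $\underline S^N$ (both following from $\psi$ and from the key fact that the sums are unaffected by movement, so they evolve at the bounded chemical rates, just as in the proofs of Lemmas~\ref{lem:crnlimit-ss} and~\ref{lem:crnlimit-ts}), and the identification step in Kurtz's averaging theorem that forces $\Gamma$ to disintegrate along the movement equilibrium $\mathbf P_{\underline s}$. If one prefers not to invoke Lemma~\ref{lem:crnlimit-cons} as a black box in the degenerate case $\mathcal I^s=\emptyset$, the alternative is to repeat the proof of Lemma~\ref{lem:crnlimit-ts} verbatim with the $N^{-1}$-rescaled movement generator playing the role of $L_{f|\underline v_s}$ and the sums playing the role of the slow/conserved coordinates; the sole new input is Lemma~\ref{l:spatial1}, which supplies $\mu_{\underline s}=\mathbf P_{\underline s}$ and confirms that the sums are the only conserved quantities of the mixed jump/flow movement limit.
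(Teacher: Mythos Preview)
Your proposal is correct and follows essentially the same route as the paper: reinterpret the compartment model as a non-spatial two-scale network on the enlarged species set $\mathcal I\times\mathcal D$, recognize that movement alone constitutes the fast subnetwork and that the sums $S_i=\sum_d V_{id}$ are precisely its conserved quantities, and then invoke Lemma~\ref{lem:crnlimit-cons} (with $\mathcal I^s=\emptyset$, $\varepsilon=\eta$) together with Lemma~\ref{l:spatial1} to supply the stationary law $\mathbf P_{\underline s}$. Your write-up is in fact more explicit than the paper's on several bookkeeping points (the null-space computation, the identification $\underline{\underline\zeta}^c=\underline{\underline\zeta}^*$, and the Poisson-process collapsing across compartments), but the underlying argument is the same.
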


\begin{pf}
In the heterogeneous reaction network, we have $|\mathcal
I|\times|\mathcal D|$ species; one for each type and each
compartment, with rescaled amounts $V^N_{id}$. Movement between
compartments can be viewed as (at most) $|\mathcal D|\times
|\mathcal D|$ first-order reactions involving only species of the
same type $i\in\mathcal I$ in different compartments, with net
change in compartment $d$ of
$(\delta_{d''}(d)-\delta_{d'}(d))_{(d',d'')\in\mathcal
D\times\mathcal D}$ at rate $\{\Lambda^{\mathrm{M}}_{i,d',d''},
d',d''\in\mathcal D\}$. This set of reactions together with the
original reactions within each compartment give an overall network
in which all the species $V^N_{id}$ with $(i,d)\in\mathcal
I\times\mathcal D$ are fast, whose conserved quantities is a vector
of sums over all the compartments for each species type, which are
given by $S_i^N := \sum_{d\in\mathcal D} V_{id}^N, i\in\mathcal
I$. Since $\eta_i>0, i\in\mathcal I$ the movement reactions change
all the species amounts on the time scale $N^{\eta}  \,dt$, and its
effective changes on this time scale are still
$(\delta_{d''}(d')-\delta_{d'}(d''))_{(d',d'')\in\mathcal
D\times\mathcal D}$ while the original within compartment
reactions effectively change only the conserved sum quantities on
the time scale $dt$ and its effective changes on this time scale are
given by $\underline{\underline\zeta}^*$.

In order to apply Lemma~\ref{lem:crnlimit-cons}, set $\varepsilon:=
\eta$ and we need to check Assumptions~\ref{ass:crnerg-cons}. In
this special case, there are no slow species only fast species and
conserved quantities. Condition (i) is simply the requirement that---in
the limit $N\to\infty$---for fixed given vector of sums of
species movement leads to a well-defined process on the species
amounts in different compartments, which for each value of the
vector of sums $\underline s$ has a unique stationary probability
measure $\mathbf P_{\underline s}$, which is concentrated on
$\sum_{d\in\mathcal D}v_{id}=s_i$. This is exactly implied by
Lemma~\ref{l:spatial1} under
Assumption~\ref{ass:migerg}. Conditions (ii) and (iii) in
Assumptions \ref{ass:crnerg-cons} is assumed in the statement of the
theorem.

Let us consider the dynamics of the conserved quantities. Here,
$\underline\theta^{c_{i}} =  (1_{j=i})_{j\in\mathcal I,
d\in\mathcal D}$ is the $i$th conserved quantity. On the time
scale $dt$, the reaction dynamics of these conserved sums is a Markov
chain whose effective change is given by the matrix
$\underline{\underline\zeta}^c = \underline{\underline\zeta}^*$
with overall rate equal to a sum of the individual compartment
rates.

Since the equilibrium for the movement dynamics $\mathbf
P_{\underline s}$ is given by
%
\begin{eqnarray}
\label{eq:migrlimiteq} %
 \mathbf P_{\underline s}(d
\underline{\underline v}) & =& \prod_{{i\in\mathcal I_\circ}}
\pmatrix{s_i
\cr
{v_{i1}\cdots v_{i|\mathcal D|}}}
\pi_i(1)^{v_{i1}} \cdots\pi_i\bigl(|\mathcal
D|\bigr)^{v_{i |\mathcal D|}}
\nonumber
\\[-8pt]
\\[-8pt]
\nonumber
&&{}\times\prod_{{i\in\mathcal
I_\bullet}}\delta_{\pi_i(1)s_i}(dv_{i1})
\cdots\delta_{\pi_i(\mathcal D)s_i}(dv_{i|\mathcal D|}),
\end{eqnarray}
the averaged rates for reaction dynamics in each compartment under
the equilibrium probability measure as considered in \eqref{eq:T1b}
are exactly
of the form \eqref{eq:averates-cons},
%
\begin{eqnarray}
\label{eq:358} %
\bar\lambda^{\mathrm{CR}}_{k}(\underline
s) & = &\sum_{\underline
v_{1 \cdot}\dvtx \sum_d v_{1d}= s_1} \cdots\sum
_{\underline
v_{|\mathcal I| \cdot}\dvtx \sum_d v_{|\mathcal I|d}= s_{|\mathcal
I|}} \sum_{d\in\mathcal D}
\lambda^{\mathrm{CR}}_{kd}(\underline v_{\cdot d})
\nonumber\\
&&{}\times \prod_{{i\in\mathcal I_\circ}} \pmatrix{s_i
\cr
v_{i1} \cdots v_{i|\mathcal D|}} \pi_i(1)^{v_{i1}}
\cdots\pi_i\bigl(|\mathcal D|\bigr)^{v_{i|\mathcal D|}}
\nonumber
\\[-8pt]
\\[-8pt]
\nonumber
&&{}\times \prod_{{i\in\mathcal
I_\bullet}}\delta_{\pi_i(1)s_i}(v_{i1})
\cdots\delta_{\pi
_i(|\mathcal
D|)s_i}(v_{i|\mathcal D|})
\\
& =& \sum_{d\in\mathcal D} \mathbf E_{\underline s} \bigl[
\lambda_{kd}^{\mathrm{CR}} (\underline V_{\cdot d}) \bigr].\nonumber
\end{eqnarray}
%
\upqed\end{pf}

\begin{corollary}[(Mass-action kinetics)]\label{cor:CT1}
Let $\underline\alpha, \underline\beta, \gamma, \underline\eta$ be
as in Theorem~\ref{T1}. If the reaction rates are given by mass
action kinetics for some $\kappa_{kd}, k\in\mathcal K, d\in\mathcal D$
%
\begin{equation}
\label{eq:360} \lambda_{kd}^{\mathrm{CR}}(\underline
v_{\cdot d}) = \kappa_{kd} \prod_{{i\in\mathcal I_\circ}}
\nu_{ik}!\pmatrix{v_{id}
\cr
\nu_{ik}} \cdot
\prod_{{i\in\mathcal I_\bullet}} v_{id}^{\nu_{ik}},
\end{equation}
then the limit of $\underline S^N(\cdot)$ in the Skorohod topology is the
solution of (\ref{eq:T1a}) with rates given by
\[
\bar\lambda^{\mathrm{CR}}_{k}(\underline s) = \sum
_{d\in\mathcal D} \kappa_{kd}\prod
_{{i\in\mathcal I_\circ}} \nu_{ik}!\pmatrix{s_i
\cr
\nu_{ik}} \pi_i(d)^{\nu_{ik}} \cdot \prod
_{{i\in\mathcal I_\bullet}} \bigl(\pi_i(d)s_i
\bigr)^{\nu_{ik}}.
\]
If $\alpha_i=0$ for all $i\in\mathcal I$, the limit process for the
sums is a Markov chain model for reaction networks with mass
action kinetics (\ref{eq:mak1}) whose rate parameters are
%
\begin{equation}
\label{eq:avemakrates} \bar\kappa_k = \sum_{d\in\mathcal D}
\kappa_{kd} \prod_{i\in
\mathcal
I}
\pi_i(d)^{\nu_{ik}}.
\end{equation}
If $\alpha_i>0$ for all $i\in\mathcal I$, the limit
process for the sums is the deterministic solution to an ordinary
differential equation
\[
d\underline S(t)=\sum_{k\in\mathcal K}\underline
\zeta^*_{\cdot
k}\bar\lambda^{\mathrm{CR}}_{k} \bigl(\underline
S(t) \bigr)\,dt
\]
with mass action kinetics (\ref{eq:mak1}) whose rate parameters are
(\ref{eq:avemakrates}).
\end{corollary}

\begin{remark}[(Different time scales for the movement)] From the point
of view of the limit on time scale $dt$, the parameters for time
scale of movement of different species types do not have to all be
equal $\eta_i=\eta$; as long as $\eta_i>0$ for all $i\in\mathcal I$,
it is easy to show that the limit dynamics of $\underline
S^N(\cdot)$ is as above.
\end{remark}

\begin{pf*}{Proof of Corollary~\ref{cor:CT1}}
We plug \eqref{eq:360} into \eqref{eq:358}. This gives
%
\begin{eqnarray*}
\bar\lambda^{\mathrm{CR}}_{k}(\underline s) & = &\mathop{\sum
_{{\underline
x_{1 \cdot}\dvtx \sum_d x_{1d}= s_1}}}_{\mathrm{if}\ \alpha_1=0} \cdots
\mathop{\sum_{{\underline x_{|\mathcal I| \cdot}\dvtx \sum_d
x_{|\mathcal I|d}= s_{|\mathcal I|}}}}_{{\mathrm{if
}\ \alpha_{|\mathcal I|=0}}} \sum
_{d\in\mathcal D} \kappa_{kd}
\\
&&{}\times \prod_{{i\in\mathcal
I_\circ}}\nu_{ik}!
\pmatrix{x_{id}
\cr
\nu_{ik}}\pmatrix{s_i
\cr
x_{i1} \cdots x_{i|\mathcal D|}} \pi_i(1)^{x_{i1}}
\cdots \pi_i\bigl(|\mathcal D|\bigr)^{x_{i|\mathcal D|}}
\\
&&{}\times \prod_{{i\in\mathcal I_\bullet}} \bigl(\pi_i(d)s_i
\bigr)^{\nu_{ik}}
\\
& =& \sum_{d\in\mathcal D} \kappa_{kd}\mathop{\sum
_{{x_{1d}=0,\ldots,s_1}}}_{\mathrm{if}\ \alpha_1=0}
\cdots\mathop{\sum_{{x_{|\mathcal I|d}= 0,\ldots,s_{|\mathcal
I|}}}}_{\mathrm{if}\ \alpha_{|\mathcal I|=0}}
\\
&&{}\times\prod_{{i\in\mathcal I_\circ}} \nu_{ik}!
\pmatrix{s_i
\cr
x_{id}}\pmatrix{x_{id}
\cr
\nu_{ik}}\pi_i(d)^{x_{id}} \bigl(1-
\pi_i(d) \bigr)^{s_i-x_{id}} \cdot\prod
_{{i\in\mathcal I_\bullet}} \bigl(\pi_i(d)s_i
\bigr)^{\nu_{ik}}
\\
& = &\sum_{d\in\mathcal D} \kappa_{kd}\prod
_{{i\in\mathcal I_\circ}} \nu_{ik}!\pmatrix{s_i
\cr
\nu_{ik}}\pi_i(d)^{\nu_{ik}} \mathop{\sum
_{{x_{1d}=0,\ldots,s_1}}}_{{\mathrm{if}\ \alpha_1=0}} \cdots
\mathop{ \sum_{{x_{|\mathcal I|d}= 0,\ldots,s_{|\mathcal I|}}}}
_{\mathrm{if}\ \alpha_{|\mathcal I|}=0}
\\
&&{}\times \prod_{{i\in\mathcal I_\circ}} \pmatrix{s_i -
\nu_{ik}
\cr
x_{id} - \nu_{ik}}
\pi_i(d)^{x_{id}-\nu_{ik}} \bigl(1-\pi_i(d)
\bigr)^{s_i-x_{id}} \cdot\prod_{{i\in\mathcal I_\bullet}} \bigl(
\pi_i(d)s_i \bigr)^{\nu_{ik}}
\\
& = &\sum_{d\in\mathcal D} \kappa_{kd}\prod
_{{i\in\mathcal
I_\circ}} \nu_{ik}!\pmatrix{s_i
\cr
\nu_{ik}} \pi_i(d)^{\nu_{ik}}\cdot\prod
_{{i\in\mathcal I_\bullet}} \bigl(\pi_i(d)s_i
\bigr)^{\nu_{ik}}.
\end{eqnarray*}
When $\alpha_i=0$ for all $i\in\mathcal I$ only the first sum in
(\ref{eq:T1a}) exists, whereas when $\alpha_i>0$ for all
$i\in\mathcal I$ only the second sum in \eqref{eq:T1a} exists.
\end{pf*}

\begin{example}[(Self-regulating gene in multiple compartments)]
We \label{ex:selfSp} place the reaction kinetics from
Example~\ref{ex:self} in a spatial multi-compartment setting. Let
the dynamics initiate with $S_G(0)+S_{G'}(0)=1$ active and inactive
genes and $S_P(0)=s_P$ proteins in the whole space. If
$\eta_{G},\eta_{G'},\eta_{P}>0$, the movement is faster than any
effective reaction dynamics, and the limiting dynamics of the
rescaled sums in the whole system solves
\begin{eqnarray*}
S_{G'}(t) & =& S_{G'}(0) + Y_{\mathfrak1} \biggl(\bar
\kappa_{\mathfrak1} \int_0^t
\bigl(1-S_{G'}(u) \bigr)S_P(u) \,du \biggr) -
Y_{\mathfrak
2} \biggl(\bar\kappa_{\mathfrak2} \int_0^t
S_{G'}(u)\,du \biggr),
\\
S_P(t) & =& S_P(0) + \bar\kappa_{\mathfrak3} \int
_0^t S_{G'}(u)\,du - \bar
\kappa_{\mathfrak4} \int_0^t
S_{P}(u)\,du,
\end{eqnarray*}
where $\bar\kappa_k$ are given by (\ref{eq:avemakrates}) and
$\pi_G,\pi_{G'},\pi_{P}$ are the equilibrium distributions of the
movement of $G, G'$ and $P$, respectively. Given the values of
system sums $S_{G'}(t),S_P(t)$ the molecules of $G',P$ will then be
distributed in compartments according to
\[
V_{G'd}(t)\sim\operatorname{ Multinom} \bigl(S_{G'}(t),
\pi_{G'}(d) \bigr), \qquad V_{P\,d}(t)\sim\delta_{S_P(t)\pi_P(d)}.
\]
\end{example}

\subsection{Spatial multi-scale systems}
\label{Sec:34}
We next consider heterogeneous reaction networks on multiple time
scales, with interplay between time scales on which the
reaction network dynamics evolves and time scales on which the species
move between compartments. We give results for chemical reactions on
two time scales, extensions to more are obvious.

We stick to our notation from Section~\ref{S:two-scale}. In
particular, we assume the reaction dynamics (within each compartment)
has a separation of time scales (\ref{eq:two-scale}) with
$\varepsilon=1, \gamma=0$. We set $\mathcal K^f$ and $\mathcal K^s$ as
in \eqref{eq:Kf} and \eqref{eq:Ks}, respectively, and $\mathcal I^f$
and $\mathcal I^s$ for the sets of fast and slow species, if only
chemical reactions within compartments are considered. The scaling
parameters for movement of all fast species is $\eta_i=\eta_f$ for
$i\in\mathcal I^f$ while for all slow species is $\eta_i=\eta_s,
i\in\mathcal I^s$. We assume both\vadjust{\goodbreak} $\eta_f,\eta_s>0$.
In order to assess the interplay of dynamics on
different time scales, we need to consider all possible orderings of
$\varepsilon=1, \eta_f$ and $\eta_s$. In the sequel, we assume that
$\eta_f, \eta_s \neq1$ for simplicity. Moreover, the cases
$\eta_s\leq\eta_f<1$ and $\eta_f < \eta_s<1$, as well as
$1<\eta_s\leq\eta_f$ and $1<\eta_f < \eta_s$ lead to the same
limiting behavior, because the movement processes occurring on the
time scale $N^{\eta_f}   \,dt$ and $N^{\eta_s}   \,dt$ are independent
(a movement of one species type on a time scale that is different from
that of reactions depends only on its own molecular counts and is
independent of other species types). Therefore, we are left with the
four cases
%
\begin{eqnarray}
\label{eq:9123} &&({1})\quad 1<\eta_s,\eta_f;\qquad
({2})\quad \eta_s<1<\eta _f;
\nonumber
\\[-8pt]
\\[-8pt]
\nonumber
&& (3)\quad
\eta_f<1<\eta_s;\qquad (4)\quad \eta_f,
\eta_s<1.
\end{eqnarray}
As in the nonspatial situation, we also need to distinguish the cases
when (i) there are no conserved quantities on the time scale of fast
species [meaning that $\mathcal N((\underline{\underline\zeta
}^f)^{\textsc t})$
is the null space], and (ii) when some quantities are conserved [i.e.,
$\mathcal N((\underline{\underline\zeta}^f)^{\textsc t}) =
\operatorname{span}(\Theta^f)$, where $\Theta^f = (\underline
\theta^{c_i})_{i=1,\ldots,n^f}$ is a linearly independent family of
$\mathbb R^{|\mathcal I^f|}$-valued vectors]. In the latter case, the
quantities $\langle\underline\theta^{c_i},
(V_{id}^N(\cdot))_{i\in\mathcal I^f}\rangle$ also change on the time
scale $N^{\eta_f}  \,dt$ for $d\in\mathcal D$ by movement of the fast
species, but $\langle\underline\theta^{c_i},
(S_{i}^N(\cdot))_{i\in\mathcal I^f}\rangle$ is constant on the time
scale $N^{\eta_f}  \,dt$. We start with the case of $\mathcal
N((\underline{\underline\zeta}^f)^{\textsc t}) =$ null space.

\subsubsection*{No conserved quantities on the fast time scale}
We need to consider different processes of possible effective reaction
dynamics for fast species and their sums, conditional on knowing the
values of the slow species. In each of the four cases above we need to
consider different intermediate processes and assumptions on them. We
write here, distinguishing fast and slow species,
$\underline{\underline v} = (\underline{\underline v}_f, \underline
{\underline v}_s)$ with $\underline{\underline v}_f =
(v_{id})_{i\in\mathcal I^f, d\in\mathcal D}$, $\underline{\underline
v}_s = (v_{id})_{i\in\mathcal I^s, d\in\mathcal D}$, as well as
$\underline s = (\underline s_f, \underline s_s)$, $\underline s_f =
(s_i)_{i\in\mathcal I^f}$, $\underline s_s = (s_i)_{i\in\mathcal
I^s}$.

\begin{assumption}[(Dynamics of the spatial multi-scale reaction network)]
In \label{ass:generg} each case (1)--(4), the spatial
two-scale reaction network on time scale $N  \,dt$, where
Assumption~\ref{ass:migerg} holds, satisfies the following
conditions:
\begin{longlist}[(iii)]
\item[(i)](1) Given $(Y_k)_{k\in\mathcal
K^f_\circ}$, the time-change equation of the dynamics of
$\underline S_f$ given the value of $\underline S_s={\underline
s}_{s}$
%
\begin{eqnarray}
\label{eq:crnlimit-1} %
{\underline S}_{f|{\underline s}_{s}}(t)& =& {
\underline S}_{f|{\underline s}_{s}}(0) + \sum_{k\in\mathcal
K^f_\circ}
\underline\zeta^f_{\cdot k} Y_{k} \biggl( \int
_0^t \tilde\lambda_{k}^{\mathrm{CR(1)}}
\bigl({\underline S}_{f|{\underline
s}_s}(u),{\underline s}_s \bigr) \,du
\biggr)
\nonumber
\\[-8pt]
\\[-8pt]
\nonumber
&&{}+\sum_{k\in\mathcal
K^f_\bullet} \underline\zeta^f_{\cdot k}
\int_0^t \tilde\lambda_{k}^{\mathrm{CR(1)}}
\bigl({\underline S}_{f|{\underline
s}_s}(u),{\underline s}_s \bigr) \,du
\end{eqnarray}
has a unique solution, where for all $\underline s_f, \underline
s_s$
%
\begin{eqnarray}
\label{eq:rates-1} \tilde\lambda_{k}^{\mathrm{CR(1)}}(\underline
s_f,\underline s_s)& = &
\mathop{
\int_{\mathbb{R}_+^{|\mathcal I^f|\times
|\mathcal D|}}}_{\times\mathbb{R}_+^{|\mathcal I^s|\times
|\mathcal D|}} \sum_{d\in\mathcal D}
\lambda_{kd}^{\mathrm
{CR}}(\underline{v}_{\cdot d,f},\underline{
v}_{\cdot
d,s})\mathbf P_{(\underline s_f,\underline
s_s)}(d\underline{\underline
v}_f,d\underline{\underline v}_s)
\nonumber
\\[-8pt]
\\[-8pt]
\nonumber
&<&\infty,
\end{eqnarray}
where $\underline{v}_{\cdot d,f}=(v_{id})_{i\in\mathcal I^f},
\underline{v}_{\cdot d,s} = (v_{id})_{i\in\mathcal I^s}$, for
$\mathbf P_{(\underline s_f,\underline s_s)}$ a product of
multinomial and point mass probability distributions\vspace*{-2pt} for both
$\underline{\underline v}_f$ and $\underline{\underline v}_s$
defined in (\ref{eq:migrlimiteq}). In addition, $\underline
S_{f|\underline s_s}(\cdot)$ has a unique stationary probability measure
$\mu_{\underline s_s}(d\underline s_f)$ on
$\mathbb{R}_+^{|\mathcal I^f|}$.\vspace*{1pt}

(2) Given $(Y_k)_{k\in\mathcal
K_\circ^f}$, the time-change equation of the dynamics of
$\underline S_f$ given the value of $\underline{\underline
V}_s={\underline{\underline v}}_{s}$
%
\begin{eqnarray}
\label{eq:crnlimit-2} %
 {\underline S}_{f|\underline{\underline v}_s}(t) &=& {
\underline S}_{f|\underline{\underline v}_s}(0) + \sum_{k\in\mathcal
K^f_\circ}
\underline\zeta^f_{\cdot k} Y_{k} \biggl( \int
_0^t \tilde\lambda_{k}^{\mathrm{CR(2)}}
\bigl({\underline S}_{f|\underline{\underline v}_s}(u),\underline{\underline v}_s
\bigr) \,du \biggr)
\nonumber
\\[-8pt]
\\[-8pt]
\nonumber
&&{}+\sum_{k\in\mathcal K^f_\bullet} \underline\zeta^f_{\cdot k}
\int_0^t \tilde\lambda_{k}^{\mathrm{CR(2)}}
\bigl({\underline S}_{f|\underline{\underline v}_s}(u),\underline{\underline v}_s
\bigr) \,du
\end{eqnarray}
has a unique solution, where for all $\underline s_f,
\underline{\underline v}_s$
%
\begin{eqnarray}
\label{eq:rates-2} \tilde\lambda_{k}^{\mathrm{CR(2)}}(\underline
s_f,\underline{\underline v}_s)=\int_{\mathbb{R}_+^{|\mathcal
I^f|\times|\mathcal D|}}
\sum_{d\in\mathcal
D}\lambda_{kd}^{\mathrm{CR}}(
\underline{ v}_{\cdot d,
f},\underline{v}_{\cdot d,s})\mathbf
P_{\underline
s_f}(d\underline{\underline v}_f)<\infty
\end{eqnarray}
for $\mathbf P_{{\underline s}_f}$ a product of multinomial and
point mass probability distributions as in~(\ref{eq:migrlimiteq}),
where $\mathcal I$ is replaced by $\mathcal I^f$, $\underline s$
by $\underline s_f$ and $\underline{\underline v}$ by
$\underline{\underline v}_f$. In addition, $\underline
S_{f|\underline{\underline v}_s}(\cdot)$ has a unique stationary probability
measure $\mu_{\underline{\underline v}_s}(d\underline s_f)$ on
$\mathbb{R}_+^{|\mathcal I^f|}$.

(3) Given $(Y_{kd})_{k\in\mathcal
K_\circ^f, d\in\mathcal D}$, the time-change equation of the
dynamics of $\underline{\underline V}_f$ given the value of
${\underline S}_s={\underline s}_{s}$
%
\begin{eqnarray}
\label{eq:crnlimit-3} %
\underline V_{\cdot d,f|{\underline s}_{s}}(t) &=&
\underline V_{\cdot
d,f|{\underline
s}_{s}}(0)  + \sum_{k\in\mathcal K^f_\circ}
\underline\zeta^f_{\cdot k} Y_{kd} \biggl( \int
_0^t \tilde\lambda_{kd}^{\mathrm{CR(3)}}
\bigl(\underline V_{\cdot d,f|{\underline s}_s}(u), {\underline s}_s \bigr) \,du
\biggr)
\nonumber
\\[-8pt]
\\[-8pt]
\nonumber
&&{}+\sum_{k\in\mathcal K^f_\bullet} \zeta^f_{ik}
\int_0^t \tilde\lambda_{kd}^{\mathrm{CR(3)}}
\bigl(\underline V_{\cdot d,f|{\underline s}_s}(u), {\underline s}_s \bigr) \,du
\end{eqnarray}
has a unique solution, where for all $\underline{\underline
v}_f,\underline s_s$
%
\begin{equation}
\label{eq:rates-3} \tilde\lambda_{kd}^{\mathrm{CR(3)}}(\underline
v_{\cdot d, f},\underline s_s)=\int_{\mathbb{R}_+^{|\mathcal
I^s|\times|\mathcal D|}}
\lambda_{kd}^{\mathrm
{CR}}(\underline{v}_{\cdot d, f},\underline{
v}_{\cdot d,
s})\mathbf P_{\underline s_s}(d\underline{\underline
v}_s)<\infty
\end{equation}
for $\mathbf P_{{\underline s}_s}$ a product of multinomial and
point mass probability distributions as in~(\ref{eq:migrlimiteq}),
where $\mathcal I$ is replaced by $\mathcal I^s$, $\underline s$
by $\underline s_s$ and $\underline{\underline v}$ by
$\underline{\underline v}_s$. In addition, $\underline{\underline
V}_{f|{\underline s}_s}(\cdot) = (V_{id,f|{\underline
s}_s}(\cdot))_{i\in\mathcal I^f, d\in\mathcal D}$ has a unique
stationary probability measure $\mu_{{\underline s}_s}(d\underline
{\underline
v}_f)$ on $\mathbb{R}_+^{|\mathcal I^f| \times|\mathcal D|}$.

(4) Given $(Y_{kd})_{k\in\mathcal
K_\circ^f, d\in\mathcal D}$, the time-change equation of the
dynamics of $\underline{\underline V}_f$ given the value of
$\underline{\underline V}_s=\underline{\underline v}_{s}$
%
\begin{eqnarray}
\label{eq:crnlimit-4} %
\underline V_{\cdot d,f|\underline{\underline v}_{s}}(t) &=&
\underline V_{\cdot d,f|\underline{\underline v}_{s}}(0) \nonumber\\
&&{} + \sum_{k\in\mathcal K^f_\circ}
\underline\zeta^f_{\cdot k} Y_{kd} \biggl( \int
_0^t \tilde \lambda_{kd}^{\mathrm{CR(4)}}
\bigl(\underline V_{\cdot d,
f|\underline{\underline
v}_s}(u),\underline v_{\cdot d, s} \bigr) \,du
\biggr)
\\
&&{}+\sum_{k\in\mathcal K^f_\bullet}\zeta^f_{ik}
\int_0^t \tilde \lambda_{kd}^{\mathrm{CR(4)}}
\bigl(\underline V_{\cdot d,
f|\underline{\underline v}_s}(u),\underline v_{\cdot d, s} \bigr) \,du\nonumber
\end{eqnarray}
has a unique solution with unique stationary probability measure
$\mu_{\underline{\underline v}_s} (d\underline{\underline
v}_f)$
on $\mathbb{R}_+^{|\mathcal I^f| \times|\mathcal D|}$. Here, we
set
%
\begin{equation}
\label{eq:lambdaCR4} \tilde\lambda_{kd}^{\mathrm{CR(4)}}:=
\lambda_{kd}^{\mathrm{CR}}.
\end{equation}
\item[(ii)] There exists a well-defined process $\underline
S_s(\cdot)$ that is the unique solution of
%
\begin{eqnarray}
\label{eq:T21} %
 \underline S_s(t) &=&
\underline S_s(0)  + \sum_{k\in\mathcal
K^s_\circ}
\underline\zeta^s_{\cdot k} Y_k \biggl( \int
_0^t \bar\lambda_k^{\mathrm{CR}(\ell)}
\bigl(\underline S_s(u) \bigr) \,du \biggr)
\nonumber
\\[-8pt]
\\[-8pt]
\nonumber
&&{} + \sum_{k\in\mathcal
K^s_\bullet} \underline\zeta^s_{\cdot k}
\int_0^t \bar\lambda_k^{\mathrm{CR}(\ell)}
\bigl(\underline S_s(u) \bigr) \,du,
\end{eqnarray}
where rates $(\bar\lambda_k^{\mathrm{CR}(\ell)})_{\ell=1,2,3,4}$ are
given from $(\tilde\lambda_k^{\mathrm{CR}(\ell)})_{\ell=1,2,3,4}$
in each case as 
\begin{eqnarray}\qquad
\label{eq:rates-final1} \bar\lambda_{k}^{\mathrm{CR(1)}}(\underline
s_s) &=&\int_{\mathbb{R}_+^{|\mathcal I^f|}} \tilde\lambda_{k}^{\mathrm{CR(1)}}(
\underline s_f,\underline s_s) \mu_{\underline s_s}(d
\underline s_f)
\nonumber
\\[-8pt]
\\[-8pt]
\nonumber
&=& \sum_{d\in\mathcal D}\iint
\lambda_{kd}^{\mathrm
{CR}}( \underline{v}_{\cdot d,f},\underline{
v}_{\cdot d,s}) \mathbf P_{(\underline s_f,\underline
s_s)}(d\underline{\underline
v}_f,d\underline{\underline v}_s) \mu_{\underline s_s}(d
\underline s_f) <\infty;
\\
\label{eq:rates-final2} \bar \lambda_{k}^{\mathrm{CR(2)}}(\underline
s_s) &=&\int_{\mathbb{R}_+^{|\mathcal I^s|\times|\mathcal
D|}}\int_{\mathbb{R}_+^{|\mathcal I^f|}}
\tilde\lambda_{k}^{\mathrm{CR(2)}}(\underline s_f,
\underline{\underline v}_s) \mu_{\underline{\underline
v}_s}(d\underline
s_f) \mathbf P_{\underline
s_s}(d\underline{\underline
v}_s)
\nonumber
\\[-8pt]
\\[-8pt]
\nonumber
&=& \sum_{d\in\mathcal D}\iiint
\lambda_{kd}^{\mathrm
{CR}}( \underline{ v}_{\cdot d, f},
\underline{v}_{\cdot d,s}) \mathbf P_{\underline s_f}(d\underline{\underline
v}_f) \mu_{\underline{\underline v}_s}(d \underline s_f) \mathbf
P_{\underline s_s}(d\underline{\underline v}_s) <\infty;
\\
\label{eq:rates-final3} \bar \lambda_{k}^{\mathrm{CR(3)}}(\underline
s_s) &=&\sum_{d\in\mathcal
D}\int
_{\mathbb{R}_+^{|\mathcal I^f|\times|\mathcal
D|}} \tilde\lambda_{kd}^{\mathrm{CR(3)}}(
\underline{\underline v}_f,\underline s_s)
\mu_{\underline
s_s}(d\underline{\underline v}_f)
\nonumber
\\[-8pt]
\\[-8pt]
\nonumber
&=& \sum_{d\in\mathcal D} \iint
\lambda_{kd}^{\mathrm
{CR}}( \underline{v}_{\cdot d, f},\underline{
v}_{\cdot d,
s})\mathbf P_{\underline s_s}(d\underline{\underline
v}_s)\mu_{\underline s_s}(d \underline{\underline v}_f) <
\infty;
\\
\label{eq:rates-final4} \bar \lambda_{k}^{\mathrm{CR(4)}}(\underline
s_s) &=&\sum_{d\in\mathcal
D}\int
_{\mathbb{R}_+^{|\mathcal I^s|\times|\mathcal
D|}}\int_{\mathbb{R}_+^{|\mathcal I^f|\times
|\mathcal D|}} \tilde
\lambda_{k}^{\mathrm{CR(4)}}(\underline{\underline v}_f,
\underline{\underline v}_s) \mu_{\underline{\underline
v}_s}(d\underline{\underline
v}_f) \mathbf P_{\underline
s_s}(d\underline{\underline
v}_s)
\nonumber
\\[-8pt]
\\[-8pt]
\nonumber
&=& \sum_{d\in\mathcal D}\iint
\lambda_{kd}^{\mathrm{CR}}( \underline v_{\cdot d,f}, \underline
v_{\cdot d,s}) \mu_{\underline{\underline
v}_s}(d \underline{\underline v}_f)
\mathbf P_{\underline
s_s}(d\underline{ \underline v}_s) <\infty.
\end{eqnarray}
%
%
\item[(iii)] Same as (iii) in Assumption~\ref{ass:crnerg} in each
compartment.
\end{longlist}
\end{assumption}

\begin{remark}[(Equivalent formulation)]
For the dynamics under the above assumption, the following is
immediate: In \label{rem:crnSpa2} each case (1)--(4), the
spatial two-scale reaction network on time scale $dt$, where
Assumption~\ref{ass:generg} holds, satisfies the following
condition: given $(Y_k)_{k\in\mathcal K^s_\circ}$,
the time-change equation \eqref{eq:T21} has a unique solution, where
for all $\underline s_s$
%
\begin{eqnarray}
\label{eq:T22b} \bar\lambda_k^{\mathrm{CR}(\ell)} ( \underline
s_s )  := \mathbf E_{\underline s_s} \biggl[\sum
_{d\in\mathcal D} \lambda_{kd}^{\mathrm{CR}}(\underline
V_{ \cdot d}) \biggr] <\infty
\end{eqnarray}
and the distribution of $(V_{id})_{i\in\mathcal I, d\in\mathcal D}$ in
\eqref{eq:T22b} \,depends on the parameters $\eta_s, \eta_f$ as
follows:
%
\begin{eqnarray}
\label{eq:mea-1}(\ell)=({1}) &&\quad \mathbf P_{\underline
s_s}(d
\underline{\underline v}_f, d\underline{\underline v}_s) =
\int_{\mathbb R_+^{|\mathcal I^f|}}\mu_{\underline s_s} (d\underline s_f)
\mathbf P_{(\underline s_f, \underline s_s)} (d\underline{\underline v}_f, d\underline{
\underline v}_s),
\\
\label{eq:mea-2} (\ell)=({2}) &&\quad \mathbf P_{\underline s_s}(d
\underline{\underline v}_f, d\underline{\underline v}_s) =
\mathbf P_{\underline
s_s}(d\underline{\underline v}_s) \int
_{\mathbb R_+^{|\mathcal
I^f|}}\mu_{\underline{\underline v}_s}(d \underline s_f)
\mathbf P_{\underline s_f}(d\underline{\underline v}_f),
\\
\label{eq:mea-3} (\ell)=({3}) &&\quad  \mathbf
P_{\underline s_s}(d\underline{\underline v}_f, d\underline{\underline
v}_s) = \mathbf P_{\underline
s_s}(d\underline{\underline
v}_s)\mu_{\underline s_s} (d\underline{\underline v}_f),
\\
\label{eq:mea-4} (\ell)=({4}) & &\quad \mathbf
P_{\underline s_s}(d\underline{\underline v}_f, d\underline{\underline
v}_s) = \mathbf P_{\underline
s_s}(d\underline{\underline
v}_s)\mu_{\underline{\underline
v}_s} (d\underline{\underline v}_f).
\end{eqnarray}
\end{remark}

We can now state our results for the limiting behavior of $\underline
S^N_f:=(S^N_i, i\in\mathcal I^f)$ and $\underline S^N_s:=(S^N_i, i\in
\mathcal I^s)$ on the time scales $N  \,dt$ and $dt$.

\begin{theorem}[(Two-scale system without conserved fast
quantities)] \label{thm:T2} Let $\underline{\underline V}^N$ be
the vector process
of rescaled species amounts for the reaction network which is the
unique solution to \eqref{eq:crn1}. Assume that $(\underline\alpha,
\underline\beta, \gamma=0)$ satisfy two-scale system
assumption \eqref
{eq:two-scale} for some
$\mathcal I^f, \mathcal I^s$ with $\varepsilon=1$ and $\mathcal
N((\underline{\underline\zeta}^f)^{\textsc t})=0$ [with
$\underline{\underline\zeta}^f$ from \eqref{eq:fast-zeta}] within
compartments without
conserved quantities on the fast time scale. In addition,
$\eta_i=\eta_f>0$ for all $i\in\mathcal I^f$ and $\eta_i=\eta_s>0$
for all $i\in\mathcal I^s$, one of the cases (1)--(4)
holds, and Assumption~\ref{ass:generg} holds. Then, if $\underline
S^N(0) \mathop{\Longrightarrow}\limits^{N\to\infty}
\underline S(0)$, the rescaled
sums of slow species $\underline S^N_s(\cdot)$ from \eqref{eq:S}
converges weakly to the unique solution $\underline S_s(\cdot)$
of \eqref{eq:T21} in the Skorohod topology.
\end{theorem}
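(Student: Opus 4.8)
The plan is to treat the spatial system~\eqref{eq:crn1} as a non-spatial multi-scale reaction network on the enlarged species set $\mathcal I\times\mathcal D$, encoding each hop $d'\to d''$ of species $i$ as a first-order reaction, and to run the stochastic-averaging argument of Lemmas~\ref{lem:crnlimit-ts} and~\ref{lem:crnlimit-ts3} with up to four nested time scales. The species sums $S^N_i=\sum_{d\in\mathcal D}V^N_{id}$ are precisely the conserved quantities of the movement sub-network (a hop never changes a sum), and among them $\underline S^N_s$ is the slowest block, so the task is to average out everything faster. Which hierarchy of averaging applies is dictated by which of the four orderings~\eqref{eq:9123} of $1,\eta_f,\eta_s$ holds, exactly as anticipated by the intermediate processes~\eqref{eq:crnlimit-1}--\eqref{eq:crnlimit-4} and the rates~\eqref{eq:rates-final1}--\eqref{eq:rates-final4} in Assumption~\ref{ass:generg}.

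First I would establish relative compactness. By the Gronwall estimate used in the proof of Lemma~\ref{lem:crnlimit-ss}, applied to $\sup_{u\le t}|\underline S^N_s(u)|$ (the movement terms cancel in the sums, and the remaining integrands are bounded by a constant times $\sup_{u\le t}|\underline S^N_s(u)|$ on the compact-containment event), together with Assumption~\ref{ass:generg}(iii), one obtains the compact containment condition for $\underline S^N_s$ and relative compactness of the occupation measures $\Gamma^N(D\times[0,t])=\int_0^t\mathbf{1}_D(\underline{\underline V}^N(u))\,du$ on $\mathbb R_+^{|\mathcal I|\times|\mathcal D|}\times[0,\infty)$; note that the full process $\underline{\underline V}^N$ need not be tight, only its slow sums. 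Condition (iii-b) then lets integrals $\int_0^\cdot g(\underline{\underline V}^N(u))\,du$ pass to the limit against a weak limit point $\Gamma$, exactly as in the proof of Lemma~\ref{lem:crnlimit-ts}.

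Next comes the averaging hierarchy, peeled off from the fastest scale downwards. For a test function $g$ of the slow sums $\underline s_s$ only, $g(\underline S^N_s(t))-\int_0^t L^N_s g(\underline{\underline V}^N(u))\,du$ is a martingale, where $L^N_s$ is the part of the generator of~\eqref{eq:crn1} acting on such functions; as in Lemma~\ref{lem:crnlimit-ts}, only the within-compartment slow reactions $\mathcal K^s$ survive in the limit. To identify the limiting rates one divides the martingale identity for functions of the appropriate groups of variables by the appropriate power of $N$: at the top scale ($N^{\eta_f}\,dt$ in cases $({\bf 1}),({\bf 2})$, or $N\,dt$ in cases $({\bf 3}),({\bf 4})$) the bound from (iii) forces the corresponding limit operator to annihilate $\Gamma$, and ergodicity of the movement chain (Assumption~\ref{ass:migerg}, via Lemma~\ref{l:spatial1}) factors out the multinomial/point-mass law $\mathbf P$ of~\eqref{eq:migrlimiteq}; at the next scale one factors out the conditional stationary law $\mu$ of the relevant intermediate process from~\eqref{eq:crnlimit-1}--\eqref{eq:crnlimit-4}, invoking Example~2.3 of~\cite{Kurtz1992} at each stage exactly as in the proof of Lemma~\ref{lem:crnlimit-ts3}. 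Concretely, in case $({\bf 1})$ both movements equilibrate above $N\,dt$, so one replaces the compartment counts by $\mathbf P_{(\underline s_f,\underline s_s)}$ and then $\underline S_f$ by $\mu_{\underline s_s}$, producing~\eqref{eq:rates-final1}; in cases $({\bf 2})$--$({\bf 4})$ only part of the movement has equilibrated by the scale $N\,dt$, so one conditions on $\underline{\underline V}_s=\underline{\underline v}_s$ or $\underline S_s=\underline s_s$ as appropriate, obtains $\mu_{\underline{\underline v}_s}$ or $\mu_{\underline s_s}$ from~\eqref{eq:crnlimit-2}--\eqref{eq:crnlimit-4}, and then equilibrates the remaining slow-species movement at scale $N^{\eta_s}\,dt$ when $\eta_s<1$, yielding~\eqref{eq:rates-final2}--\eqref{eq:rates-final4}. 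In every case one arrives at $\Gamma(d\underline{\underline v}\times du)=\mathbf P_{\underline S_s(u)}(d\underline{\underline v}_f,d\underline{\underline v}_s)\,du$ with the product measures of Remark~\ref{rem:crnSpa2}, so the limiting martingale for $g$ of $\underline s_s$ is $g(\underline S_s(t))-\int_0^t\bar L_s g(\underline S_s(u))\,du$ with rates $\bar\lambda^{\text{CR}(\ell)}_k$ from~\eqref{eq:T22b}. Condition (ii) of Assumption~\ref{ass:generg} then says this martingale problem is well posed with unique solution~\eqref{eq:T21}, so every weak limit point of $\underline S^N_s$ has that law, and with the tightness above this gives $\underline S^N_s\Rightarrow\underline S_s$ in the Skorohod topology.

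I expect the main obstacle to be the case bookkeeping: in each of $({\bf 1})$--$({\bf 4})$ the order in which the four time scales are peeled off, and correspondingly which variable one conditions on when invoking ergodicity, is different, and one must verify in each case that the intermediate limit operators produce continuous limits of finite variation (hence identically zero), so that the successive factorizations of $\Gamma$ through~\cite{Kurtz1992} are legitimate. This is precisely where Assumption~\ref{ass:generg}(iii) and the simplifying hypothesis $\eta_f,\eta_s\neq 1$ (so that no movement scale coincides with the fast-reaction scale $N\,dt$) enter.
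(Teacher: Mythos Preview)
Your proposal is correct and follows essentially the same approach as the paper: recast the spatial system as a non-spatial multi-scale network on $\mathcal I\times\mathcal D$ with movement as first-order reactions, recognize the sums $\underline S^N$ as the conserved quantities of the movement sub-network, and then apply Lemmas~\ref{lem:crnlimit-ts3} and~\ref{lem:crnlimit-cons} case by case according to the ordering of $1,\eta_f,\eta_s$. The paper's own proof is terser---it simply invokes those lemmas and sketches for each of $({\bf 1})$--$({\bf 4})$ which process plays the role of fast, medium, and slow, and which equilibrium ($\mathbf P$ or $\mu$) is averaged over at each stage---whereas you have unpacked more of the underlying occupation-measure and martingale machinery, but the logical structure is the same.
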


\begin{remark}[(Interpretation)]
The rates in Theorem~\ref{thm:T2} have an intuitive interpretation.
In order to compute $\mathbf E_{\underline s} [
\lambda_{kd}^{\mathrm{CR}} (\underline V_{\cdot d}) ]$, we have to
know the distribution of $\underline{\underline V}$ given
$\underline S_s$. Consider case (1) as an example: since
movement of particles are the fastest reactions in the system, given
the value of $\underline S_s=\underline s_s$, (i)
$\underline{\underline V}_s$ are distributed according to $\mathbf
P_{\underline s_s}(d\underline{\underline v}_s)$ from
\eqref{eq:migrlimiteq}, and (ii) $\underline S_f$ is distributed
according to the probability measure $\mu_{\underline
s_s}(d\underline s_f)$ from Assumption~\ref{ass:generg}(i)(1); then, given the value of $\underline S_f=\underline s_f$, the
values of $\underline{\underline V}_f$ are distributed according to
$\mathbf P_{\underline s_f}(d\underline{\underline v}_f)$ from
\eqref{eq:migrlimiteq}. In case (3), fast reactions within
compartments intertwine with movement between them: given the value
of $\underline S_s=\underline s_s$, (i) $\underline{\underline V}_s$
are again distributed according to $\mathbf P_{\underline s_s}$ from
\eqref{eq:migrlimiteq}, but (ii) $\underline{\underline V}_f$ are
distributed according to $\mu_{{\underline
s_s}}(d\underline{\underline v}_f)$.
\end{remark}

\begin{pf*}{Proof of Theorem~\ref{thm:T2}}
The proof relies on use
of Lemmas \ref{lem:crnlimit-ts3} and \ref{lem:crnlimit-cons}. Let us
first consider the case (1): $\eta_f,\eta_s>1$. In this case
on the two fastest time scales $N^{\eta_f}   \,dt$ and $N^{\eta_s}
\,dt$, we have movement of fast and slow species, respectively, whose
sums are unchanged on any time scale faster than $N  \,dt$. We view
these two fastest time scales as one, since dynamics of movement of
fast and slow species are in this case independent of each other and
can be combined. Regarding all of the movement as a set of
first-order reactions as in proof of Theorem~\ref{T1}, we have a
three time scale dynamics: movement of all species is the fast
process on time scales $N^{\eta_f}   \,dt,N^{\eta_s}  \,dt$, effective
change of fast species is the medium process on time scale $N  \,dt$,
and effective change of slow species is the slow process on the time
scale $dt$. The fast process of movement of all species has a
stationary probability measure that is a product of multinomial and
point mass probability distributions $\mathbf P_{(\underline s_f,
\underline s_s)}$ from (\ref{eq:migrlimiteq}). Arguments from
Lemma~\ref{lem:crnlimit-cons} imply that on the time scale $N  \,dt$
all rates for the reaction network dynamics
$\tilde\lambda_k^{\mathrm{CR(1)}}$ are sums over compartments of rates
averaged with respect to $\mathbf P_{(\underline s_f,\underline
s_s)}$ as in (\ref{eq:rates-1}), and the medium process of the
sums of fast species $\underline S_f$ has effective change given by
$\underline{\underline\zeta}^f$. Condition (i)(1) of
Assumption~\ref{ass:generg} ensures that on time scale $N  \,dt$ the
medium process $\underline S_f(\cdot)$ is well defined and has a
unique stationary probability distribution $\mu_{\underline
s_s}(d\underline s_f)$. Condition (ii) of
Assumption~\ref{ass:generg} then ensures that, in addition to
conditions (i-a) and (i-b), also condition (ii) in the assumptions
for Lemma~\ref{lem:crnlimit-ts3} is met, and consequently the
limiting dynamics of the slow process $\underline S_s(\cdot)$ with
effective change given by $\underline{\underline\zeta}^f$ is
well defined and given by the solution of (\ref{eq:T21}) with rates
as in (\ref{eq:rates-final1}).

Next, consider the case (2): $\eta_f>1, \eta_s<1$. In this case
we have a four time scale dynamics: movement of fast species is the
fast process on time scale $N^{\eta_f}  \,dt$, effective change of
fast species is the medium-fast process on time scale $N  \,dt$,
movement of slow species is the medium-slow process on time scale
$N^{\eta_s}  \,dt$, and finally effective change of slow species is
the slow process on time scale $dt$. The fast process on time scale
$N^{\eta_f}  \,dt$ of movement of all species has a stationary
probability measure that is $\mathbf P_{\underline s_f}$
(\ref{eq:migrlimiteq}) (over $i\in\mathcal I^f$
only). Lemma~\ref{lem:crnlimit-cons} implies that on the next time
scale $N  \,dt$ rates $\tilde\lambda_k^{\mathrm{CR(2)}}$ are averaged
with respect to $\mathbf P_{\underline s_f}$ as in~(\ref{eq:rates-2}),
and the medium-fast process $\underline S_f$ has
an effective change given by $\underline{\underline\zeta}^f$. We
now have that this process is well defined and has a unique
stationary probability distribution $\mu_{\underline{\underline
v}_s}(d\underline s_f)$. Furthermore, on the next time scale
$N^{\eta_s}  \,dt$ we only have the movement of slow species, which
has a stationary probability measure that is $\mathbf P_{\underline
s_s}$ from (\ref{eq:migrlimiteq}) (over $i\in\mathcal I^s$
only). Finally, the limiting dynamics of the slow process $\underline
S_s(\cdot)$ on time scale $dt$, by an extension of
Lemma~\ref{lem:crnlimit-ts3} to four time scales, is well defined
and given by the solution of (\ref{eq:T21}) with rates as in
\eqref{eq:rates-final2}.

Let us next consider the case (3): $\eta_f<1,\eta_s>1$. We
again have a four time scale dynamics: movement of slow species is
the fast process on time scale $N^{\eta_s}  \,dt$, effective change
of fast species is the medium-fast process on time scale $N  \,dt$,
movement of fast species is the medium-slow process on time scale
$N^{\eta_f}  \,dt$, and finally effective change of slow species is
the slow process on time scale $dt$. Lemma~\ref{lem:crnlimit-cons}
implies that on the medium-fast time scale $N  \,dt$ rates
$\tilde\lambda_{kd}^{\mathrm{CR(3)}}$ are averaged with respect to
$\mathbf P_{\underline s_s}$ (\ref{eq:migrlimiteq}) (over
$i\in\mathcal I^{s}$ only) as in (\ref{eq:rates-3}), and the
medium-fast process $\underline{\underline V}_f$ has an effective
change given by $\underline{\underline\zeta}^f$ in each compartment
$d\in\mathcal D$. This process is well defined and has a unique
stationary probability distribution $\mu_{\underline
s_s}(d\underline{\underline v}_f)$. On the next time scale
$N^{\eta_s}  \,dt$ we only have the movement of slow species, which
has a stationary probability measure that is $\mathbf P_{\underline
s_f}$ (\ref{eq:migrlimiteq}) (over $i\in\mathcal I^f$
only). Finally, on time scale $dt$, Lemma~\ref{lem:crnlimit-ts3}
extended to four time-scales implies the limiting dynamics of the
slow process $\underline S_s(\cdot)$ is well defined and given by
the solution of (\ref{eq:T21}) with rates as in
(\ref{eq:rates-final3}).

Finally, we consider case (4): $\eta_f<1,\eta_s<1$. On the
fast time scale $N  \,dt$ in each compartment $d\in\mathcal D$,
independently we have reaction dynamics of the fast species, with a
unique equilibrium $\mu_{\underline{\underline
v}_s}(\underline{\underline v}_f)$ that must be a product
distribution over the different compartments. Similarly, to case
(1) when all the movement is fastest, now all the
movement is on the medium time scale, and the movement of
all molecules (fast and slow) is independent and can be viewed as
combined on one time scale with unique stationary
probability distribution $\mathbf P_{(\underline s_f, \underline
s_s)}$ (\ref{eq:migrlimiteq}) (over all $i\in\mathcal I$). This
implies that on the slow time scale $dt$, the effective change of
$\underline S_s$ is due to reaction dynamics with rates
$\tilde\lambda_{kd}^{\mathrm{CR,4}}(\underline{\underline
v}^N_f,\underline{\underline v}^N_s)$ that have been averaged over
$\mathbf P_{(\underline s_f, \underline s_s)}$, and is given by
$\underline{\underline\zeta}^s$. Lemma~\ref{lem:crnlimit-ts3}
implies that $\underline S_s(\cdot)$ is well defined and given by
the solution of (\ref{eq:T21}) with rates as in
(\ref{eq:rates-final4}).
\end{pf*}

If the movement of fast species is slower than fast reactions [i.e.,
we consider cases (3) or (4)], the equilibria for reactions
is always attained before movement of fast species can change this
equilibrium, as stated in the next corollary.

\begin{corollary}[(Irrelevance of movement of fast species)]
In \label{cor:indepSf} cases (3) or~(4) of Theorem~\ref{thm:T2},
the limiting dynamics of $\underline
S_s$ is independent of $\mathbf P_{\underline s_f}$.
\end{corollary}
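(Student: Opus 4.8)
The plan is to obtain the corollary by reading off the shape of the averaged reaction rates that the proof of Theorem~\ref{thm:T2} produces in cases~({\bf 3}) and~({\bf 4}), and then appealing to uniqueness of the solution of~\eqref{eq:T21}. Since $\underline S_s(\cdot)$ is characterized as that unique solution with rates $\bar\lambda_k^{\text{CR($\ell$)}}$, it is enough to check that for $\ell\in\{3,4\}$ these rates carry no dependence on the movement equilibrium $\mathbf P_{\underline s_f}$ of the fast species.

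For case~({\bf 3}) I would look at~\eqref{eq:rates-final3}: the rate $\bar\lambda_k^{\text{CR(3)}}(\underline s_s)$ is an integral of $\lambda_{kd}^{\text{CR}}$ against $\mathbf P_{\underline s_s}(d\underline{\underline v}_s)$ --- the movement equilibrium of the \emph{slow} species --- and against $\mu_{\underline s_s}(d\underline{\underline v}_f)$, where by Assumption~\ref{ass:generg}(i)({\bf 3}) the latter is the \emph{unique} stationary distribution of the within-compartment fast-reaction process $\underline{\underline V}_{f|\underline s_s}$ of~\eqref{eq:crnlimit-3}. The key observation is that $\underline s_f$ enters~\eqref{eq:crnlimit-3} only through the initial condition $\underline{\underline V}_{f|\underline s_s}(0)$; because that process has a unique invariant law, $\mu_{\underline s_s}$ is a functional of the reaction dynamics alone, independent both of $\underline s_f$ and of the transport rates $\lambda^{\text{M}}_{i,d',d''}$ for $i\in\mathcal I^f$. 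Hence $\mathbf P_{\underline s_f}$ is absent from $\bar\lambda_k^{\text{CR(3)}}$ --- equivalently, it does not occur in the decomposition~\eqref{eq:mea-3} --- and so $\underline S_s(\cdot)$ does not depend on it. The conceptual reason is that in case~({\bf 3}) movement of the fast species lives on the scale $N^{\eta_f}\,dt$ with $\eta_f<1$, strictly slower than the within-compartment fast reactions on scale $N\,dt$, so these reactions re-equilibrate every compartment before transport of the fast species can redistribute them.

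Case~({\bf 4}) is entirely analogous: combining~\eqref{eq:rates-final4} with~\eqref{eq:lambdaCR4} expresses $\bar\lambda_k^{\text{CR(4)}}(\underline s_s)$ as an integral of $\lambda_{kd}^{\text{CR}}$ against $\mathbf P_{\underline s_s}(d\underline{\underline v}_s)$ and against $\mu_{\underline{\underline v}_s}(d\underline{\underline v}_f)$, the latter being by Assumption~\ref{ass:generg}(i)({\bf 4}) the unique stationary distribution of the within-compartment fast-reaction process~\eqref{eq:crnlimit-4}, a product over $d\in\mathcal D$ depending only on $\underline{\underline v}_s$; again $\mathbf P_{\underline s_f}$ does not appear, in agreement with~\eqref{eq:mea-4}. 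Feeding these rates into~\eqref{eq:T21} and invoking the uniqueness assumed in Assumption~\ref{ass:generg}(ii) finishes both cases.

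I do not anticipate a real obstacle --- the statement is essentially already encoded in Theorem~\ref{thm:T2}. The only point needing care is to make explicit that in~\eqref{eq:crnlimit-3} and~\eqref{eq:crnlimit-4} the fast-species sum $\underline s_f$, hence $\mathbf P_{\underline s_f}$, enters purely as an initial datum of a process whose unique invariant law washes out that dependence. If a self-contained argument were preferred to reading off the rates, one could instead rerun the time-scale separation in the proof of Theorem~\ref{thm:T2} with $\lambda^{\text{M}}_{i,d',d''}\equiv 0$ for $i\in\mathcal I^f$ and verify that none of the averaging steps on the scales $N\,dt$ and $dt$ is affected, since the movement of fast species contributes only on the intermediate scale $N^{\eta_f}\,dt$, on which the already-equilibrated fast reactions pin the occupation measure of the fast species to $\mu_{\underline s_s}(d\underline{\underline v}_f)$, respectively $\mu_{\underline{\underline v}_s}(d\underline{\underline v}_f)$.
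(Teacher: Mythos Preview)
Your proposal is correct and takes essentially the same approach as the paper: the paper's proof simply observes that the right-hand sides of~\eqref{eq:mea-3} and~\eqref{eq:mea-4} do not contain $\mathbf P_{\underline s_f}$, which is precisely the core of your argument (you additionally unpack \emph{why} via the uniqueness of the stationary measures in Assumption~\ref{ass:generg}(i)({\bf 3}),({\bf 4}), but this is elaboration rather than a different route).
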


\begin{pf}
The assertion can be seen directly from \eqref{eq:mea-3}
and \eqref{eq:mea-4}, since the right-hand sides do not depend on
$\mathbf P_{\underline s_f}$.
\end{pf}

If all slow species are continuous, the limiting dynamics for cases
(1), (2) and (3), (4) are equal. The key to
this observation is the following lemma.

\begin{lemma}\label{lem:equivofeq}
In the situation of Theorem~\ref{thm:T2}, assume all slow species are
continuous,
$\mathcal I^s_\circ= \varnothing$, and let
${\underline{\underline{\pi s}}}_s := (\pi_i(d)s_i)_{i\in\mathcal
I^s, d\in\mathcal D}$.
\begin{longlist}[(ii)]
\item[(i)] For stationary probability measures $\mu_{\underline
s_s} (d\underline s_f)$ of $\underline
S_{f|\underline s_s}$ from
Assumption~\ref{ass:generg}$\mathrm{(i)({1})}$ and
$\mu_{\underline{\underline v}_s}(d\underline s_f)$ of $\underline
S_{f|\underline{\underline v}_s}$ from $\mathrm{(i)({2})}$, we
have
\[
\mu_{\underline s_s}(d\underline s_f) =
\mu_{\underline{\underline
{\pi
s}}_s}(d\underline s_f).
\]
\item[(ii)] Likewise, for stationary probability measures
$\mu_{{\underline s}_{s}}(d\underline{\underline v}_f)$ of
$\underline{\underline V}_{f|{\underline s}_{s}}$ from
$\mathrm{(i)({3})}$ and $\mu_{\underline{\underline
v}_{s}}(d\underline{\underline v}_f)$ of
$\underline{\underline V}_{f|\underline{\underline v}_{s}}$ from
$\mathrm{(i)({4})}$, we have
\[
\mu_{{\underline s}_{s}}(d\underline{\underline v}_f) =
\mu_{\underline{\underline{\pi s}}_{s}}(d\underline{\underline v}_f).
\]
\end{longlist}
\end{lemma}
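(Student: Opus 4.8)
The plan is to reduce everything to the single observation that, when $\mathcal I^s_\circ = \emptyset$, the movement equilibrium of the slow species is degenerate. Indeed, since every $i \in \mathcal I^s$ has $\alpha_i > 0$, Lemma~\ref{l:spatial1}(2) shows that the limiting movement dynamics of species $i$ (for a fixed total $s_i$) converges to the single configuration $(\pi_i(d) s_i)_{d \in \mathcal D}$; together with the product form~\eqref{eq:migrlimiteq} and the partition $\mathcal I = \mathcal I^f \sqcup \mathcal I^s$, this gives
\[\mathbf P_{(\underline s_f, \underline s_s)}(d\underline{\underline v}_f, d\underline{\underline v}_s) = \mathbf P_{\underline s_f}(d\underline{\underline v}_f)\,\delta_{\underline{\underline{\pi s}}_s}(d\underline{\underline v}_s), \qquad \mathbf P_{\underline s_s}(d\underline{\underline v}_s) = \delta_{\underline{\underline{\pi s}}_s}(d\underline{\underline v}_s),\]
where $\mathbf P_{\underline s_f}$ denotes the product of multinomial and point mass distributions over $i \in \mathcal I^f$ from~\eqref{eq:migrlimiteq}.

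For part (i) I substitute this into the averaged rate~\eqref{eq:rates-1}: integrating out the point mass in $\underline{\underline v}_s$ gives
\[\tilde\lambda_k^{\text{CR(1)}}(\underline s_f, \underline s_s) = \sum_{d \in \mathcal D} \int \lambda_{kd}^{\text{CR}}\big(\underline v_{\cdot d, f}, (\pi_i(d)s_i)_{i\in\mathcal I^s}\big)\, \mathbf P_{\underline s_f}(d\underline{\underline v}_f),\]
which is precisely $\tilde\lambda_k^{\text{CR(2)}}(\underline s_f, \underline{\underline{\pi s}}_s)$ from~\eqref{eq:rates-2} with the slow-compartment configuration frozen at $\underline{\underline v}_s = \underline{\underline{\pi s}}_s$. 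Since the time-change equations~\eqref{eq:crnlimit-1} and~\eqref{eq:crnlimit-2} are driven by the same Poisson processes $(Y_k)_{k \in \mathcal K^f_\circ}$, with the same stoichiometric vectors $\underline\zeta^f_{\cdot k}$, and now with identical rate functions, the process $\underline S_{f|\underline s_s}$ of case ({\bf 1}) coincides in law with the process $\underline S_{f|\underline{\underline v}_s}$ of case ({\bf 2}) evaluated at $\underline{\underline v}_s = \underline{\underline{\pi s}}_s$ (same initial law, pathwise-unique solution). By the uniqueness of the stationary probability measures postulated in Assumption~\ref{ass:generg}(i)({\bf 1}) and ({\bf 2}), their stationary laws agree, i.e.\ $\mu_{\underline s_s}(d\underline s_f) = \mu_{\underline{\underline{\pi s}}_s}(d\underline s_f)$.

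Part (ii) is the same argument carried out for the compartment-wise fast process $\underline{\underline V}_f$ rather than its sums. Feeding $\mathbf P_{\underline s_s} = \delta_{\underline{\underline{\pi s}}_s}$ into~\eqref{eq:rates-3} yields $\tilde\lambda_{kd}^{\text{CR(3)}}(\underline v_{\cdot d, f}, \underline s_s) = \lambda_{kd}^{\text{CR}}\big(\underline v_{\cdot d, f}, (\pi_i(d)s_i)_{i\in\mathcal I^s}\big)$, which by~\eqref{eq:lambdaCR4} equals $\tilde\lambda_{kd}^{\text{CR(4)}}$ with $\underline v_{\cdot d, s}$ set to $(\pi_i(d)s_i)_{i \in \mathcal I^s}$. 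Hence~\eqref{eq:crnlimit-3} with parameter $\underline s_s$ and~\eqref{eq:crnlimit-4} with parameter $\underline{\underline v}_s = \underline{\underline{\pi s}}_s$ are literally the same time-change equation (same Poisson processes $(Y_{kd})$, same $\underline\zeta^f_{\cdot k}$), so $\underline{\underline V}_{f|\underline s_s}$ and $\underline{\underline V}_{f|\underline{\underline{\pi s}}_s}$ have the same law, and uniqueness of the stationary measures gives $\mu_{\underline s_s}(d\underline{\underline v}_f) = \mu_{\underline{\underline{\pi s}}_s}(d\underline{\underline v}_f)$.

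The only point needing care is the degeneracy of the slow-species movement equilibrium and the resulting factorization $\mathbf P_{(\underline s_f, \underline s_s)} = \mathbf P_{\underline s_f} \otimes \delta_{\underline{\underline{\pi s}}_s}$; once that is in hand the rate functions coincide verbatim and the conclusion is immediate from the well-posedness and uniqueness hypotheses already built into Assumption~\ref{ass:generg}. I do not anticipate a genuine obstacle --- it is in effect a bookkeeping identity --- but one should verify that the frozen slow configuration $\underline{\underline{\pi s}}_s$ lies in the set of values of $\underline{\underline v}_s$ for which the case-({\bf 2}) and case-({\bf 4}) processes and their stationary measures are well defined, which is ensured by Assumption~\ref{ass:generg}(i) applied at that particular value.
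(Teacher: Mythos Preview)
Your proof is correct and follows essentially the same route as the paper's. Both arguments rest on the single observation that, since $\mathcal I^s_\circ=\emptyset$, the slow-species movement equilibrium degenerates to the point mass $\delta_{\underline{\underline{\pi s}}_s}$, whence $\tilde\lambda_k^{\text{CR(1)}}(\underline s_f,\underline s_s)=\tilde\lambda_k^{\text{CR(2)}}(\underline s_f,\underline{\underline{\pi s}}_s)$ (and likewise for cases ({\bf 3}),({\bf 4})). The only cosmetic difference is that you conclude by identifying the two time-change equations directly and invoking uniqueness of their common stationary law, whereas the paper phrases the same step through the generator, verifying $\int L_{f|\underline s_s}g\,d\mu_{\underline{\underline{\pi s}}_s}=0$; the content is identical.
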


\begin{pf}
(i) It suffices to show that $\mu_{\underline{\underline{\pi s}}_s}$
is a stationary probability measure for the process $\underline
S_{f|\underline s_s}(\cdot)$ from \eqref{eq:crnlimit-1}, since we
assumed that this process has a unique stationary probability
distribution.
Note that, by independence of the movement of fast and slow species,
for $k\in\mathcal K^f$,
%
\begin{eqnarray*}
\tilde\lambda_{k}^{\mathrm{CR(1)}}(\underline s_f,
\underline s_s) & =& \int\sum_{d\in\mathcal D}
\lambda_{kd}^{\mathrm{CR}}(\underline{v}_{\cdot d,f},
\underline{v}_{\cdot d,s}) \mathbf P_{(\underline s_f,
\underline s_s)}(d\underline{\underline
v}_f, d\underline{\underline v}_s)
\\
& =& \int\sum_{d\in\mathcal D} \lambda_{kd}^{\mathrm{CR}}(
\underline{v}_{\cdot d,f}, \underline{v}_{\cdot d,s}) \mathbf
P_{\underline
s_f}(d\underline{\underline v}_f) \mathbf
P_{\underline
s_s}(d\underline{\underline v}_s)
\\
& =& \int\tilde\lambda_{k}^{\mathrm{CR(2)}}(\underline s_f,
\underline{\underline v}_s)\mathbf P_{\underline
s_s}(d\underline{
\underline v}_s)
\\
& =& \tilde\lambda_{k}^{\mathrm{CR(2)}}(\underline s_f,
\underline{\underline{\pi s}}_s).
\end{eqnarray*}
Since these rates are equal, the corresponding equilibrium
distributions must be equal as well. In other words, the equilibrium
distribution $\mu_{\underline s_s}(d\underline s_f)$ from
Assumption~\ref{ass:generg}(i)({1}) must equal the equilibrium
distribution $\mu_{\underline{\underline\pi s}_s}(d\underline s_f)$
from Assumption~\ref{ass:generg}(i)({2}).
(ii) follows along similar lines.
\end{pf}

\begin{corollary}\label{cor:T2}
Suppose in Theorem~\ref{thm:T2} all slow species are continuous,
$\mathcal I_\circ^s = \varnothing$. Then, the dynamics
of \eqref{eq:T21} is the same among the first two cases
(1) and~(2), and among the last two cases
(3) and (4).
\end{corollary}

\begin{pf}
Since\vspace*{2pt} $\mathbf P_{\underline s_s} (d\underline{\underline v}_s)$ is
the delta-measure on $\underline{\underline{\pi s}}_s$, all
assertions can be read directly
from \eqref{eq:mea-1}--\eqref{eq:mea-4} together with
Lemma~\ref{lem:equivofeq}.
\end{pf}

We note that the case (1) (where all
species move faster than the fast reactions occur) plays a special
role under mass action kinetics.

\begin{corollary}[(Homogeneous mass action kinetics)] \label{cor:hom}
Suppose that in Theorem~\ref{thm:T2} the reaction rates are given
by mass action kinetics with constants satisfying the homogeneity
condition
\[
\kappa_{k}:=|\mathcal D| \kappa_{kd}\prod
_{i\in\mathcal
I}\pi_{i}(d)^{\nu_{ik}}.
\]
Then the dynamics of $\underline S_s$ in case (1) is
the same as for the system regarded as a single compartment.
\end{corollary}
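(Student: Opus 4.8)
The plan is to reduce everything to case (\textbf{1}) of Theorem~\ref{thm:T2} and then evaluate the limiting rates $\bar\lambda_k^{\text{CR(1)}}$ explicitly under the stated homogeneity condition, showing they coincide with the single-compartment rates $\kappa_k \prod_{i\in\mathcal I^s_\circ}\nu_{ik}!\binom{s_i}{\nu_{ik}}\prod_{i\in\mathcal I^s_\bullet}s_i^{\nu_{ik}}$ (with the appropriate averaging over the fast species' stationary law). First I would invoke Theorem~\ref{thm:T2}, case (\textbf{1}), so that the limit $\underline S_s$ solves~\eqref{eq:T21} with rates $\bar\lambda_k^{\text{CR(1)}}(\underline s_s)$ given by~\eqref{eq:rates-final1}; the task is to compute these rates. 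Since the reaction rates are mass-action of the form~\eqref{eq:360}, the inner sum $\sum_{d\in\mathcal D}\lambda_{kd}^{\text{CR}}(\underline v_{\cdot d,f},\underline v_{\cdot d,s})$ appearing in~\eqref{eq:rates-final1} is, after averaging over the product multinomial/point-mass measure $\mathbf P_{(\underline s_f,\underline s_s)}$, exactly the quantity computed in the proof of Corollary~\ref{cor:CT1}: for each reaction $k$ one obtains
\[
\sum_{d\in\mathcal D}\kappa_{kd}\prod_{i\in\mathcal I^f_\circ\cup\mathcal I^s_\circ}\nu_{ik}!\binom{s_i}{\nu_{ik}}\pi_i(d)^{\nu_{ik}}\prod_{i\in\mathcal I^f_\bullet\cup\mathcal I^s_\bullet}(\pi_i(d)s_i)^{\nu_{ik}},
\]
where I am using that for mass action the combinatorial identities of Corollary~\ref{cor:CT1} apply separately to each type $i$ regardless of whether it is fast or slow.

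The key algebraic step is then to observe that the factor $\prod_{i\in\mathcal I}\pi_i(d)^{\nu_{ik}}$ — which appears (in the discrete case as $\pi_i(d)^{\nu_{ik}}$ and in the continuous case as part of $(\pi_i(d)s_i)^{\nu_{ik}}$) as a common factor for \emph{all} species in reaction $k$ — can be pulled out, so the $d$-sum collapses:
\[
\sum_{d\in\mathcal D}\kappa_{kd}\prod_{i\in\mathcal I}\pi_i(d)^{\nu_{ik}} = \frac{1}{|\mathcal D|}\sum_{d\in\mathcal D}\kappa_k = \kappa_k,
\]
by the homogeneity hypothesis $\kappa_k = |\mathcal D|\,\kappa_{kd}\prod_{i\in\mathcal I}\pi_i(d)^{\nu_{ik}}$. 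This leaves
\[
\sum_{d\in\mathcal D}\mathbf E[\cdots] = \kappa_k\prod_{i\in\mathcal I_\circ}\nu_{ik}!\binom{s_i}{\nu_{ik}}\prod_{i\in\mathcal I_\bullet}s_i^{\nu_{ik}},
\]
i.e.\ precisely the mass-action rate~\eqref{eq:mak1}/the rescaled rate $\lambda_k^{\text{CR}}$ evaluated at $\underline s$, for a \emph{single} compartment with rate constant $\kappa_k$. What remains is to average this over the stationary law $\mu_{\underline s_s}(d\underline s_f)$ of the fast-species sums; but since the single-compartment two-scale limit (Lemma~\ref{lem:crnlimit-ts}) produces $\bar\lambda_k^{\text{CR}}(\underline s_s) = \int \lambda_k^{\text{CR}}(\underline s_f,\underline s_s)\,\mu_{\underline s_s}(d\underline s_f)$ with exactly the same stationary measure (the fast subnetwork and its generator are identical once the rates are $\kappa_k$), the two limiting rates agree, and hence so do the limiting processes by uniqueness of the solution of~\eqref{eq:T21}.

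The main obstacle I anticipate is bookkeeping rather than conceptual: one must carefully track which species are discrete versus continuous and which are fast versus slow, and verify that the stationary measure $\mu_{\underline s_s}$ arising in case~(\textbf{1}) of Theorem~\ref{thm:T2} is genuinely the \emph{same} as the stationary measure of the single-compartment fast subnetwork with collapsed rate constants $\kappa_k$. This requires checking that the fast-species dynamics~\eqref{eq:crnlimit-1}, after the rate-averaging collapse above, has generator identical to the single-compartment fast generator from~\eqref{eq:crnlimit-fast} with rates $\kappa_k\prod v^{\nu}$; this follows because $\underline{\underline\zeta}^f$ is unchanged and the averaged rates $\tilde\lambda_k^{\text{CR(1)}}$ reduce to the single-compartment mass-action rates by the same computation as above applied to $k\in\mathcal K^f$ (using homogeneity for those reactions too). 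Once this identification is made, uniqueness of the martingale-problem solution (Assumption~\ref{ass:crnerg}(ii), inherited through Assumption~\ref{ass:generg}(ii)) finishes the argument.
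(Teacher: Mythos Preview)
Your proposal is correct and follows essentially the same approach as the paper: compute $\tilde\lambda_k^{\text{CR(1)}}(\underline s_f,\underline s_s)$ via the multinomial/point-mass calculation of Corollary~\ref{cor:CT1}, collapse the $d$-sum using the homogeneity hypothesis to obtain the single-compartment mass-action rate $\kappa_k\prod_{\alpha_i=0}\nu_{ik}!\binom{s_i}{\nu_{ik}}\prod_{\alpha_i>0}s_i^{\nu_{ik}}$, and then observe that this forces the fast-subnetwork equilibrium $\mu_{\underline s_s}$ in case~(\textbf{1}) to coincide with the single-compartment one from Assumption~\ref{ass:crnerg}(i). Your discussion of the ``main obstacle'' (identifying the two stationary measures via identical generators) is in fact more explicit than the paper's one-sentence justification of that same point.
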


\begin{pf} For (1) from \eqref{eq:rates-1}, we only need to
calculate the average with respect to equilibrium of the movement
dynamics for both slow and fast species. The same calculation as for
mass action kinetics in Corollary~\ref{cor:CT1} we get the
first equality in
%
\begin{eqnarray*}
\tilde\lambda^{\mathrm{CR(1)}}_{k}(\underline s_f,
\underline s_s) & =& \sum_{d\in\mathcal D}\int
\lambda_{kd}^{\mathrm
{CR}}(\underline{v}_{\cdot d,f},\underline{
v}_{\cdot
d,s})\mathbf P_{(\underline s_f,\underline
s_s)}(d\underline{\underline
v}_f,d\underline{\underline v}_s)
\\
& = &\sum_{d\in\mathcal D} \kappa_{kd}\prod
_{{i\in\mathcal
I_\circ}} \nu_{ik}! \pmatrix{s_i
\cr
\nu_{ik}} \pi_i(d)^{\nu_{ik}}\cdot\prod
_{{i\in\mathcal I_\bullet}} \bigl(\pi_i(d)s_i
\bigr)^{\nu_{ik}}
\\
& = &\kappa_k \prod_{{i\in\mathcal
I_\circ}}
\nu_{ik}!\pmatrix{s_i
\cr
\nu_{ik}} \prod
_{{i\in\mathcal I_\bullet}}s_i^{\nu_{ik}}.
\end{eqnarray*}
Since the right-hand side gives the reaction rates for mass action
kinetics within a single compartment, as given through $\underline
V_{f|\underline v_s}$ from \eqref{eq:crnlimit-fast}, the equilibrium
$\mu_{\underline v_s}(d\underline z)$ from
Assumption~\ref{ass:crnerg}(i) and $\mu_{\underline s_s}(d\underline
s_f)$ from Assumption~\ref{ass:generg}(i)(1) must be the same
and the assertion follows.
\end{pf}

\begin{example}[(Production from fluctuating source in multiple compartments)]
We \label{ex:productionSp} consider reaction kinetics from
Example~\ref
{ex:production}
and extend it to a spatial multi-compartment setting. Recall that the
chemical reaction network is given (within compartments) by the set
of reactions
%
\begin{eqnarray*}
\mathfrak1\dvtx\quad A + B \mathop{\xrightarrow}^{\kappa_{\mathfrak1d}'} C,\qquad
\mathfrak2\dvtx\quad \varnothing\mathop{\xrightarrow}^{\kappa_{\mathfrak
2d}'} B,\qquad
\mathfrak3\dvtx\quad B \mathop{\xrightarrow}^{\kappa_{\mathfrak3d}'} \varnothing.
\end{eqnarray*}
We consider $\Lambda_{k}^{\mathrm{CR}}(\underline x)$ as
in \eqref{eq:second14} with $\kappa_{k}'$ replaced by
$\kappa_{kd}'$, $k\in\{\mathfrak1, \mathfrak2, \mathfrak3\}$. We
have $\underline{\underline x} = (\underline{x}_{\cdot
d})_{d\in\mathcal D}$, $\underline{x}_{\cdot d} = (x_{Ad},
x_{Bd})$, and the dynamics are given by
%
\begin{eqnarray*}
\Lambda_{\mathfrak1d}^{\mathrm{CR}}(\underline x_{\cdot d}) =
\kappa_{\mathfrak1d}' x_{Ad} x_{Bd},\qquad
\Lambda_{\mathfrak2d}^{\mathrm{CR}}(\underline x_{\cdot d}) =
\kappa_{\mathfrak2d}',\qquad \Lambda_{\mathfrak
3d}^{\mathrm{CR}}(
\underline x_{\cdot d}) = \kappa_{\mathfrak
3d}'
x_{Bd}.
\end{eqnarray*}
Movement of species is given as in \eqref{eq:crn1}. Scaling in
each compartment is as in the nonspatial setting
\eqref{eq:second14b}, \eqref{eq:second15} and \eqref{eq:second15b},
so rescaled species counts are
\[
v_{Ad} = N^{-1}x_{Ad},\qquad v_{Bd} =
x_{Bd},
\]
%
and rates are
\[
\lambda_{\mathfrak1d}^{\mathrm{CR}}(\underline v_{\cdot d}) =
\kappa_{\mathfrak1d} v_{Ad} v_{Bd},\qquad \lambda_{\mathfrak
2d}^{\mathrm{CR}}(
\underline v_{\cdot d}) = \kappa_{\mathfrak2d},\qquad \lambda_{\mathfrak3}^{\mathrm{CR}}(
\underline v_{\cdot d}) = \kappa_{3d} v_{Bd}.
\]
%
The process $\underline{\underline V}^N = (V_{Ad}^N, V_{Bd}^N)$ is
given as in \eqref{eq:1gene312b} and additional movement terms. We
set $\eta_s = \eta_A$ for movement of slow species and $\eta_f =
\eta_B$ for movement of fast species. We assume (as in
Assumption~\ref{ass:migerg}) that movement of species $A,B$ have
stationary probability distributions $(\pi_A(d))_{d\in\mathcal D}$
and $(\pi_B(d))_{d\in\mathcal D}$. We derive the dynamics of $S_A$
as
\[
S_A(t) = S_A(0) - \int
_0^t \bar\lambda^{\mathrm{CR}}
\bigl(S_A(u) \bigr)\,du
\]
for appropriate $\lambda$. Since the slow species $A$ are
continuous, we are in the regime of Corollary~\ref{cor:T2} and we
distinguish the following two cases:

\textit{Dynamics in the cases} (1)${}+{}$(2). We have
\begin{eqnarray*}
&&S_{B|s_A}(t) - S_{B|s_A}(0)\\
&&\qquad = -Y_{\mathfrak1} \biggl( \int
_0^t \int \sum_{d\in\mathcal D}
\kappa_{\mathfrak1d} v_{Ad}v_{Bd} \mathbf
P_{(s_A, S_{B|s_A}(u))} (d\underline v_A, d\underline v_B)
\,du \biggr)
\\
&&\qquad\quad{} + Y_{\mathfrak2} \biggl(\sum_{d\in\mathcal D}
\kappa_{\mathfrak2d} t \biggr) -Y_{\mathfrak3} \biggl( \int
_0^t \int\sum_{d\in\mathcal D}
\kappa_{\mathfrak3d} v_{Bd} \mathbf P_{(s_A, S_{B|s_A}(u))} (d\underline
v_A, d\underline v_B) \,du \biggr)
\\
&&\qquad\mathop{=}\limits^{d} -Y_{\mathfrak1+ \mathfrak3} \biggl( \int_0^t
\biggl(\underbrace{\sum_{d\in\mathcal D} \kappa_{\mathfrak
1d}
\pi_A(d) \pi_B(d)}_{=: \bar\kappa_{\mathfrak1}} s_A
+ \underbrace{\sum_{d\in\mathcal D} \kappa_{\mathfrak
3d}
\pi_B(d)}_{=: \bar\kappa_{\mathfrak3}} \biggr) S_{B|s_A}(u) \,du \biggr)
\\
&&\qquad\quad{} + Y_{\mathfrak2} \biggl( \underbrace{\sum_{d\in\mathcal
D}
\kappa_{\mathfrak2d}}_{=: \bar\kappa_{\mathfrak2}} t \biggr).
\end{eqnarray*}
Hence, the equilibrium of the above process is as in
Example~\ref{ex:production} given by
\[
X\sim\mu_{s_A}(ds_B)= \operatorname{Pois}
\biggl(\frac{\bar\kappa
_{\mathfrak
2}}{\bar\kappa_{\mathfrak3} + \bar\kappa_{\mathfrak
1}s_A} \biggr).
\]
We can now compute $\bar\lambda_{\mathfrak1}^{\mathrm{CR(1)+(2)}}$
from (\ref{eq:rates-final1}) as
%
\begin{eqnarray}
\label{eq:1876} %
 \bar\lambda^{\mathrm{CR(1)+(2)}}_{\mathfrak1}
(s_A) & =&- \int\sum_{d\in\mathcal D}
\kappa_{\mathfrak1d} \pi_A(d)s_A \pi
_B(d) s_{B} \mu_{s_A}(ds_B)
\nonumber
\\[-8pt]
\\[-8pt]
\nonumber
& =& -\sum_{d\in\mathcal D} \kappa_{\mathfrak1d}
\pi_A(d)s_A\pi_B(d) \frac{\bar\kappa_{\mathfrak2}
}{\bar\kappa_{\mathfrak3} + \bar\kappa_{\mathfrak1}s_A} =
-\frac{ \bar\kappa_{\mathfrak1}\bar\kappa_{\mathfrak2} s_A}{
\bar\kappa_{\mathfrak3} + \bar\kappa_{\mathfrak1}s_A}.
\end{eqnarray}

\textit{Dynamics in the cases} (3)${}+{}$(4). We have in each
compartment $d\in\mathcal D$
\begin{eqnarray*}
&&V_{B_d|s_A}(t) - V_{B_d|s_A}(0) \\
&&\qquad=-Y_{\mathfrak1} \biggl( \int
_0^t \int\kappa_{\mathfrak1d}
v_{Ad}V_{B_d|s_A}(u) \mathbf P_{s_A} (d\underline
v_{A_d}) \,du \biggr) + Y_{\mathfrak2} ( \kappa _{\mathfrak2d} t )
\\
&&\qquad\quad{} -Y_{\mathfrak3} \biggl( \int_0^t \int
\kappa_{\mathfrak
3d} V_{B_d|s_A}(u) \mathbf P_{s_A} (d\underline
v_{A_d}) \,du \biggr)
\\
&&\qquad \mathop{=} \limits^{d} -Y_{\mathfrak1+ \mathfrak3} \biggl( \int
_0^t \bigl( \kappa_{\mathfrak1d}
\pi_A(d) s_A + \kappa_{\mathfrak3d} \bigr)
V_{B_d|s_A}(u) \,du \biggr) + Y_{\mathfrak2} (\kappa _{\mathfrak
2d} t ).
\end{eqnarray*}
Hence, the equilibrium of the above process is
\[
X\sim\mu_{s_A}(dv_{B_d})= \operatorname{Pois}
\biggl(\frac{\kappa_{\mathfrak
2d}}{\kappa_{\mathfrak3d} + \kappa_{\mathfrak
1d}\pi_A(d)s_A} \biggr)
\]
and for $\bar\lambda^{\mathrm{CR(3)+(4)}}_{\mathfrak1}$ from
(\ref{eq:rates-final3}) we have
%
\begin{eqnarray}
\label{eq:1877} %
\bar\lambda^{\mathrm{CR(3)+(4)}}_{\mathfrak1}(s_A)
&=&-
\sum_{d\in\mathcal D} \int \kappa_{\mathfrak1d}
v_{Ad}v_{Bd} \mu_{s_A}(dv_{B_d})\mathbf
P_{s_A}(dv_{A_d})
\nonumber
\\[-8pt]
\\[-8pt]
\nonumber
&= &- \sum_{d\in\mathcal D} \frac{\kappa_{\mathfrak
1d}\kappa_{\mathfrak2d}\pi_A(d)s_A}{\kappa_{\mathfrak3d} +
\kappa_{\mathfrak1d}\pi_A(d)s_A}.
\end{eqnarray}
Note that we are in the regime of Corollary~\ref{cor:indepSf}, which
shows that $\bar\lambda^{\mathrm{CR(3)+(4)}}_{\mathfrak1}$ is
independent of $\pi_B$.

\textit{Comparison of dynamics in cases} (1)${}+{}$(2) \textit{and}
(3)${}+{}$(4). Let us compare the case $({1})+({2})$, when the turnover rate of $A$ is given by (\ref{eq:1876}),
and $({3})+({4})$, when the rate is given by
(\ref{eq:1877}). First note that even when the network is spatially
homogeneous (the chemical constants satisfy assumption in Corollary~\ref{cor:hom}) there is a marked difference between the dynamics of
cases $({1})+({2})$ (as in single compartment case) and
cases $({3})+({4})$ depending on the movement equilibria
$\pi_A$ and~$\pi_B$. However, if we additionally suppose the slow
species $A$ are equidistributed $\pi_A(d)=1/|\mathcal D|$ then all
four cases have the same dynamics.
\end{example}

\subsubsection*{Conserved quantities on the fast time scale}
Now, we include \emph{conserved quantities} in our two-scale system
in multiple compartments, that is, we have a two-scale reaction network
with $\operatorname{dim}(\mathcal N(({\underline{\underline\zeta
}}^f)^{\textsc t}
))=:n^f>0$. We will use the same notation as in
Section~\ref{Sec:24}. In particular, $\Theta^f := (\underline
\theta^{c_j})_{j=1,\ldots,n^f}$ are linearly independent vectors which
span the null space of $({\underline{\underline\zeta}}^f)^{\textsc
t}$. Every
$\underline\theta^{c_j}$ has a unique parameter $\alpha_i$ associated
with it, $j=1,\ldots,|\Theta^f|=n^f$. Here, $\Theta^f_\circ$ is the
subset of conserved quantities for which $\alpha_{c_j}=0$, and
$\Theta^f_\bullet$ is the subset of conserved quantities for which
$\alpha_{c_j}>0$. Conservation means that $t\mapsto\langle\underline
\theta^{c_j}, \underline S_{f|\underline s_s}(t)\rangle$ with
$\underline S_{f|\underline s_s}$ from \eqref{eq:crnlimit-1} is
constant, $j=1,\ldots,|\Theta^f|$. We let $S_{c_j}^N = \langle
\underline\theta^{c_j}, \underline S_f^N\rangle$ and $\underline
S^N_c=(S_{c_j}^N)_{i=1,\ldots,|\Theta^f|}$ be the vector of rescaled
conserved quantities. Again, $\mathcal K_{\underline\theta^{c_j}}$ is
the set of reactions such that $\beta_k=\alpha_{c_j}$ and $\langle
\underline\theta^{c_j}, \underline\zeta_{\cdot k}\rangle\neq0$, and
let $\mathcal K^c:=\bigcup_{j=1}^{|\Theta^f|}\mathcal
K_{\underline\theta^{c_i}}$, $\mathcal
K^c_\circ:=\bigcup_{j=1}^{|\Theta^f_\circ|}\mathcal
K_{\underline\theta^{c_j}}$ and $\mathcal
K^c_\bullet:=\bigcup_{j=1}^{|\Theta^f_\bullet|}\mathcal
K_{\underline\theta^{c_j}}$. We still
let $\underline{\underline\zeta}^c$ be the matrix defined
by \eqref{eq:cons-zeta}.

Again, we consider the four cases as given in \eqref{eq:9123}. In
addition, we assume that $\langle\underline\theta^{c_j}, \underline
S^N\rangle$ changes on the time scale $dt$. We write here,
distinguishing fast species, conserved quantities and slow species,
$\underline{\underline v} = (\underline{\underline v}_f, \underline
{\underline v}_c, \underline{\underline v}_s)$ with
$\underline{\underline v}_f = (v_{id})_{i\in\mathcal I^f, d\in
\mathcal
D}$, $\underline{\underline v}_c = (\langle\underline
\theta^{c_j},\underline v_{\cdot d, f}\rangle)_{j=1,\ldots,|\Theta^f|,
d\in\mathcal D}$, $\underline{\underline v}_s =
(v_{id})_{i\in\mathcal I^s, d\in\mathcal D}$, as well as $\underline s
= (\underline s_f, \underline s_c, \underline s_s)$, $\underline s_f =
(s_i)_{i\in\mathcal I^f}$, $\underline s_c = (\langle\underline
\theta^{c_j},\underline
s_f\rangle)_{j=1,\ldots,|\Theta^f|}$, and $\underline s_s =
(s_i)_{i\in\mathcal I^s}$.

\begin{remark}[(Conserved quantities as new species)]
In light of the previous results, one would guess that conserved
quantities on the fast time scale can be handled as if they are new
chemical species, evolving on the slow time scale. However, an
important distinction between slow species and conserved quantities
does exist: movement of conserved quantities occurs on the time
scale $N^{\eta_f}  \,dt$ rather than on the time scale $N^{\eta_s}
\,dt$, on which it occurs for the slow species. This implies \textit{an
important distinction between slow species and conserved
quantities occurs in cases} (2) \textit{and} (3); the
averaging measures over the intermediate time scales treat conserved
and slow species differently.
\end{remark}

Although what follows resembles our previous results, in order to be
able to use them,
we do have to state the assumptions and results for systems
with conserved species explicitly. We omit all the proofs as they
follow analogous steps
to those for systems without conserved species.

\begin{assumption}[(Dynamics of the spatial multi-scale reaction
network with conserved quantities)]
In \label{ass:generg2} each case (1)--(4), the spatial
two-scale reaction network on time scale $N  \,dt$, where
Assumption~\ref{ass:migerg} holds, satisfies the following
conditions:
\begin{longlist}[(iii)]
\item[(i)](1) Given
$(Y_k)_{k\in\mathcal K^f_\circ}$, the time-change equation of the
dynamics of $\underline S_f$ given the values of $\underline
S_s={\underline s}_{s}$ and $\underline S_c={\underline s}_{c}$,
denoted $({\underline S}_{f|({\underline s}_{s}, {\underline
s}_{c})}(t))_{t\geq0}$, given by \eqref{eq:crnlimit-1} with
${\underline S}_{f|{\underline s}_{s}}$ replaced by ${\underline
S}_{f|({\underline s}_{s}, {\underline s}_{c})}$, has a unique
solution, where $\tilde\lambda_{k}^{\mathrm{CR(1)}}(\underline
s_f,\underline s_s)$ is given by \eqref{eq:rates-1}. In addition,
$\underline S_{f|(\underline s_s, \underline s_c)}(\cdot)$ has a
unique stationary probability measure $\mu_{(\underline s_s,
\underline s_c)}(d\underline s_f)$ on $\mathbb{R}_+^{|\mathcal
I^f|}$ with $\langle\underline\theta^{c_j}, \underline
s_f\rangle= s_{c_j}$, $\mu_{(\underline s_s, \underline
s_c)}$-almost surely, $j=1,\ldots,|\Theta^f|$.

(2) Given $(Y_k)_{k\in\mathcal
K_\circ^f}$, the time-change equation of the dynamics of
$\underline S_f$ given the value of $\underline{\underline
V}_s={\underline{\underline v}}_{s}$ and $\underline
S_c={\underline s}_{c}$, denoted $({\underline
S}_{f|(\underline{\underline v}_s, \underline s_c)}(t))_{t\geq
0}$, given by \eqref{eq:crnlimit-2} with ${\underline
S}_{f|\underline{\underline v}_s}$ replaced by ${\underline
S}_{f|(\underline{\underline v}_s, \underline s_c)}$, has a
unique solution, where $\tilde\lambda_{k}^{\mathrm
{CR(2)}}(\underline s_f,\underline{\underline v}_s)$ is given
by \eqref{eq:rates-2}. In addition, $\underline
S_{f|(\underline{\underline v}_s, \underline s_c)}(\cdot)$ has a
unique stationary probability measure $\mu_{(\underline{\underline
v}_s, \underline s_c)}(d\underline s_f)$ on
$\mathbb{R}_+^{|\mathcal I^f|}$ with $\langle\underline
\theta^{c_j}, \underline s_f\rangle= s_{c_j}$,
$\mu_{(\underline{\underline v}_s, \underline s_c)}$-almost
surely, $j=1,\ldots,|\Theta^f|$.

(3) Given $(Y_{kd})_{k\in\mathcal
K_\circ^f, d\in\mathcal D}$, the time-change equation of the
dynamics of $\underline{\underline V}_f$ given the values of
${\underline S}_s={\underline s}_{s}$ and ${\underline{\underline
V}}_c={\underline{\underline v}}_{c}$, denoted by
$(\underline{\underline V}_{f|({\underline
s}_{s},{\underline{\underline v}}_{c})}(t))_{t\geq0}$, given
by \eqref{eq:crnlimit-3} with $\underline V_{\cdot d,f|{\underline
s}_{s}}$ replaced by $\underline V_{\cdot d,f|({\underline
s}_{s},{\underline{\underline v}}_{c})}$, has a unique
solution, where $\tilde\lambda_{kd}^{\mathrm
{CR(3)}}(\underline{\underline v}_f,\underline s_s)$ is given by
\eqref{eq:rates-3}. In addition, $\underline{\underline
V}_{f|({\underline s}_s, {\underline{\underline
v}}_{c})}(\cdot)$ has a unique stationary probability
measure $\mu_{({\underline s}_s, {\underline{\underline
v}}_{c})}(d\underline{\underline v}_f)$ on
$\mathbb{R}_+^{|\mathcal I^f| \times|\mathcal D|}$ with $\langle
\underline\theta^{c_j}, \underline{\underline v}_f\rangle=
\underline v_{c_j}$, $\mu_{(\underline{s}_s, \underline{\underline
v}_c)}$-almost surely, $j=1,\ldots,|\Theta^f|$.

Moreover, given $\underline s_s$ and $\underline s_c$, the
movement dynamics of $\underline{\underline V}_{c|(\underline s_s,
\underline s_c)}$ is a unique solution $\underline{\underline
V}_{c|(\underline s_s, \underline s_c)}(\cdot) = (\underline
V_{\cdot d, c|(\underline s_s, \underline s_c)})_{d\in\mathcal D}$
of the time-change equations
%
\begin{eqnarray}
\label{eq:Vc1}  &&\bigl\langle\underline\theta^{c_j},
\underline V_{\cdot d,
f} \bigr\rangle_{|(\underline s_s, \underline s_c)}(t) - \bigl\langle
\underline \theta^{c_j},\underline V_{\cdot d, f} \bigr\rangle
_{|(\underline s_s, \underline s_c)}(0)
\nonumber
\\
&&\qquad =  \sum_{i\in\mathcal I^f_\circ} \theta^{c_j}_i
\sum_{d', d''\in\mathcal D} \bigl(\delta_{d''}(d)-
\delta_{d'}(d) \bigr)\nonumber
\\
&&\hspace*{65pt}\qquad\quad{} \times Y_{i,d',d''} \biggl( \int_0^t
\lambda^{\mathrm{M}}_{i,d',d''} \int v_{id'} \mu_{(\underline
s_s, \underline{\underline V}_{c|(\underline s_s, \underline
s_c)}(u))}
(d\underline{\underline v}_f)\,du \biggr),\nonumber\\
&&\hspace*{308pt}\theta^{c_j} \in\Theta^f_\circ,
\\
&&\bigl\langle\underline\theta^{c_j}, \underline
V_{\cdot d,
f} \bigr\rangle_{|(\underline s_s, \underline s_c)}(t) - \bigl\langle\underline
\theta^{c_j},\underline V_{\cdot d, f} \bigr\rangle
_{|(\underline s_s, \underline s_c)}(0)
\nonumber
\\
&&\qquad = \sum_{i\in\mathcal I^f_\bullet} \theta^{c_j}_i
\sum_{d'\in\mathcal D} \int_0^t
\int \bigl(\lambda^{\mathrm{M}}_{i,d',d} v_{id'} -
\lambda^{\mathrm{M}}_{i,d,d'} v_{id} \bigr)
\nonumber
\\
&&\hspace*{80pt}\qquad\quad{} \times\mu_{(\underline s_s, \underline{\underline
V}_{c|(\underline s_s, \underline s_c)}(u))}
(d\underline{\underline v}_f) \,du,\qquad
\theta^{c_j} \in\Theta^f_\bullet,
\nonumber
\end{eqnarray}
with an equilibrium probability distribution of movement $\mathbf
P_{(\underline s_s, \underline s_c)}(d\underline{\underline v}_c)$
with $\sum_{d\in\mathcal D} \underline v_{\cdot d, c} = \underline
s_c$, $\mathbf P_{(\underline s_s, \underline
s_c)}(d\underline{\underline v}_c)$-almost surely.

(4) Given $(Y_{kd})_{k\in\mathcal
K_\circ^f, d\in\mathcal D}$, the time-change equation of the
dynamics of $\underline{\underline V}_f$ given the values of
$\underline{\underline V}_s=\underline{\underline v}_{s}$ and
${\underline{\underline V}}_{c} = {\underline{\underline v}}_{c}$,
denoted by $(\underline{\underline V}_{f|({\underline{\underline
v}}_{s},{\underline{\underline v}}_{c})}(t))_{t\geq0}$,
given by \eqref{eq:crnlimit-4} with $\underline V_{\cdot
d,f|{\underline{\underline v}}_{s}}$ replaced by $\underline
V_{\cdot d,f|({\underline{\underline
v}}_{s},{\underline{\underline v}}_{c})}$, has a unique
solution, where $\tilde\lambda_{kd}^{\mathrm
{CR(4)}}(\underline{\underline v}_f,\underline{\underline v}_s)$
is given by \eqref{eq:lambdaCR4}. In addition,
$\underline{\underline V}_{c|(\underline{\underline v}_{s},
{\underline{\underline v}}_{c})}$ has a unique stationary
probability measure $\mu_{(\underline{\underline v}_s,
{\underline{\underline v}}_{c})}(d\underline{\underline v}_f)$
with $\langle\underline\theta^{c_j}, \underline{\underline
v}_f\rangle= \underline v_{c_j}$, $\mu_{(\underline{\underline
{v}}_s, \underline{\underline v}_c)}$-almost surely,
$j=1,\ldots,|\Theta^f|$.

Moreover, given $\underline{\underline v}_s$ and
$\underline s_c$, the movement dynamics of $\underline{\underline
V}_{c|(\underline{\underline v}_s, \underline s_c)}$ is a unique
solution $\underline{\underline V}_{c|(\underline{\underline v}_s,
\underline s_c)} = (\underline V_{\cdot d,
c|(\underline{\underline v}_s, \underline s_c)})_{d\in\mathcal
D}$ of the time-change equations \eqref{eq:Vc1} with
$\underline{\underline V}_{c|(\underline s_s, \underline s_c)}$
replaced by $\underline{\underline V}_{c|(\underline{\underline
v}_s, \underline s_c)}$ with an equilibrium probability
distribution of movement $\mathbf P_{(\underline{\underline v}_s,
\underline s_c)}(d\underline{\underline v}_c)$ with
$\sum_{d\in\mathcal D} \underline v_{\cdot d, c}= \underline s_c$,
$\mathbf P_{(\underline{\underline v}_s, \underline s_c)}$-almost
surely.
\item[(ii)] From
$\{\tilde\lambda_k^{\mathrm{CR}(\ell)}\}_{\ell=1,2,3,4}$, we set
in each case
%
\begin{eqnarray}
\label{eq:rates-final-cons1} \bar\lambda_{k}^{\mathrm{CR(1)}}(\underline
s_s, \underline s_c) &=&\int_{\mathbb{R}_+^{|\mathcal I^f|}}
\tilde\lambda_{k}^{\mathrm{CR(1)}}(\underline s_f,\underline
s_s) \mu_{(\underline s_s, \underline s_c)}(d\underline s_f);
\\
\label{eq:rates-final-cons2} \bar \lambda_{k}^{\mathrm{CR(2)}}(\underline
s_s, \underline s_c) &=&\int_{\mathbb{R}_+^{|\mathcal I^s|\times|\mathcal
D|}}\int
_{\mathbb{R}_+^{|\mathcal I^f|}} \tilde\lambda_{k}^{\mathrm{CR(2)}}(\underline
s_f, \underline{\underline v}_s) \mu_{(\underline{\underline v}_s,
\underline s_c)}(d
\underline s_f) \mathbf P_{\underline
s_s}(d\underline{\underline
v}_s) ;
\\
\label{eq:rates-final-cons3} \bar \lambda_{k}^{\mathrm{CR(3)}}(\underline
s_s, \underline s_c) &=& \int_{\mathbb{R}_+^{|\Theta^f|\times|\mathcal D|}}
\int_{\mathbb{R}_+^{|\mathcal I^f|\times|\mathcal D|}} \tilde\lambda_{k}^{\mathrm{CR(3)}}(
\underline{\underline v}_f,\underline s_s)
\mu_{(\underline s_s,
\underline{\underline v}_c)}(d\underline{\underline v}_f)
\nonumber
\\[-8pt]
\\[-8pt]
\nonumber
&&\hspace*{84pt}{}\times \mathbf
P_{(\underline s_s,\underline
s_c)}(d\underline{\underline v}_c);
\\
\label{eq:rates-final-cons4} \quad\qquad\bar \lambda_{k}^{\mathrm{CR(4)}}(\underline
s_s, \underline s_c) &=&\int_{\mathbb{R}_+^{|\mathcal I^s|\times|\mathcal
D|}}\int
_{\mathbb{R}_+^{|\Theta^f|\times|\mathcal
D|}}\int_{\mathbb{R}_+^{|\mathcal I^f|\times
|\mathcal D|}} \tilde
\lambda_{k}^{\mathrm{CR(4)}}(\underline{\underline v}_f,
\underline{\underline v}_s) \mu_{(\underline{\underline v}_s,
\underline{\underline v}_c)}(d\underline{\underline
v}_f)
 \nonumber
 \\[-8pt]
 \\[-8pt]
 \nonumber
 &&\hspace*{123pt}{}\times\mathbf P_{(\underline{\underline v}_s,\underline
s_c)}(d\underline{\underline
v}_c)\mathbf P_{\underline
s_s}(d\underline{\underline
v}_s).
\end{eqnarray}
For $j=1,2,3,4$, there exists a well-defined process $(\underline
S_s(\cdot), \underline S_c(\cdot))$ that is the unique solution of
%
\begin{eqnarray}
\label{eq:T21c} %
 \underline S_s(t) & =&
\underline S_s(0) + \sum_{k\in\mathcal
K^s_\circ} \underline
\zeta^s_{\cdot k} Y_k \biggl( \int
_0^t \bar\lambda_k^{\mathrm{CR}(\ell)}
\bigl(\underline S_s(u),\underline S_c(u) \bigr) \,du
\biggr)
\nonumber
\\[-8pt]
\\[-8pt]
\nonumber
&&{} + \sum_{k\in\mathcal K^s_\bullet} \underline\zeta^s_{\cdot k}
\int_0^t \bar\lambda_k^{\mathrm{CR}(\ell)}
\bigl(\underline S_s(u),\underline S_c(\cdot) \bigr) \,du,
\end{eqnarray}
and for $j=1,\ldots,|\Theta^f|$,
\begin{eqnarray}
\label{eq:T21d} S_{c_j}(t) & =& S_{c_j}(0)\nonumber
\\
&&{} +
\sum_{k\in\mathcal K^c_\circ} \sum
_{i\in\mathcal I^f} \theta_{c_j}^i \underline
\zeta^s_{ik} Y_k \biggl( \int
_0^t
\bar\lambda_k^{\mathrm{CR}(\ell)}
\bigl(\underline S_s(u),\underline S_c(u) \bigr) \,du
\biggr)
\\
&&{} +
\sum_{k\in\mathcal K^c_\bullet} \sum
_{i\in\mathcal I^f} \theta_{c_j}^i \underline
\zeta^s_{ik} \int_0^t
\bar\lambda_k^{\mathrm{CR}(\ell)} \bigl(\underline S_s(u),
\underline S_c(u) \bigr) \,du.\nonumber
\end{eqnarray}
\item[(iii)] Same as (iii) in Assumption~\ref{ass:crnerg} in each
compartment.
\end{longlist}
\end{assumption}

\begin{remark}[(Equivalent formulation)]
For the dynamics under the above assumption, the following is
immediate: In \label{rem:crnSpa3} each case (1)--(4), the
spatial two-scale reaction network on time scale $dt$, where
Assumption~\ref{ass:generg2} holds, satisfies the following
condition: given $(Y_k)_{k\in\mathcal K^s_\circ\cup\mathcal
K^c_\circ}$, the time change equations \eqref{eq:T21c} and
\eqref{eq:T21d} have a unique solution, with
%
\begin{equation}
\label{eq:T22f} \bar\lambda_k^{\mathrm{CR}(\ell)} ( \underline
s_s ) := \mathbf E_{(\underline s_s, \underline
s_c)} \biggl[\sum
_{d\in\mathcal D} \lambda_{kd}^{\mathrm{CR}}(\underline
V_{ \cdot d}) \biggr] <\infty.
\end{equation}
The distribution of $(V_{id})_{i\in\mathcal I, d\in\mathcal D}$ in
\eqref{eq:T22f} depends on the parameters $\eta_s, \eta_f$ as
follows:
\begin{eqnarray*}
(1) &&\quad \mathbf P_{(\underline s_s,\underline s_c)} (d\underline{
\underline v}_f, d\underline{\underline v}_s) = \mathbf
P_{\underline s_s} (d\underline{\underline v}_s) \int
_{\mathbb R_+^{|\mathcal I^f|}}\mathbf P_{\underline s_f} (d\underline{\underline
v}_f)\mu_{(\underline s_s,
\underline s_c)} (d\underline s_f) ,
\\
(2) &&\quad \mathbf P_{(\underline s_s,\underline s_c)}(d\underline{
\underline v}_f, d\underline{\underline v}_s) = \mathbf
P_{\underline
s_s}(d\underline{\underline v}_s) \int
_{\mathbb R_+^{|\mathcal
I^f|}}\mathbf P_{\underline s_f}(d\underline{\underline
v}_f) \mu_{(\underline{\underline v}_s, \underline s_c)}(d \underline s_f) ,
\\
(3)&& \quad\mathbf P_{(\underline s_s,\underline s_c)}(d\underline{
\underline v}_f, d\underline{\underline v}_s) = \mathbf
P_{\underline
s_s}(d\underline{\underline v}_s) \int
_{\mathbb
R_+^{|\Theta_f|\times|\mathcal D|}} \mu_{(\underline s_s,
\underline{\underline v}_c)} (d\underline{\underline
v}_f)\mathbf P_{(\underline s_s,\underline
s_c)}(d\underline{\underline
v}_c) ,
\\
(4) &&\quad \mathbf P_{(\underline s_s,\underline s_c)}(d\underline{
\underline v}_f, d\underline{\underline v}_s) = \mathbf
P_{\underline
s_s}(d\underline{\underline v}_s) \int
_{\mathbb
R_+^{|\Theta_f|\times|\mathcal D|}} \mu_{(\underline{\underline
v}_s, \underline{\underline v}_c)} (d\underline{\underline
v}_f)\mathbf P_{(\underline{\underline v}_s,\underline
s_c)}(d\underline{\underline
v}_c).
\end{eqnarray*}
\end{remark}

\begin{theorem}[(Heterogeneous two-scale system with conserved fast
quantities)] Let \label{thm:T3} $\underline{\underline V}^N$ be the
vector process of rescaled species amounts for the reaction network
which is the unique solution to \eqref{eq:crn1}. Assume that
$(\underline\alpha, \underline\beta, \gamma=0)$ satisfy two-scale
system assumptions \eqref{eq:two-scale} for some $\mathcal I^f,
\mathcal I^s$ with $\varepsilon=1$ and $\mathcal
N((\underline{\underline\zeta}^f)^{\textsc t})=\operatorname{span}(\Theta
^f)$ [with
$\underline{\underline\zeta}^f$ from \eqref{eq:fast-zeta} and
$\Theta^f$ from \eqref{eq:Thetaf}] within compartments with
conserved quantities $(\theta^{c_j})_{i=1,\ldots,|\Theta^f|}$ on the
fast time scale. In addition, $\eta_i=\eta_f>0$, $i\in\mathcal I^f$,
$\eta_i=\eta_s>0$, $i\in\mathcal I^s$, one of the cases
(1)--(4) holds and Assumption~\ref{ass:generg2} holds.
Then, if $(\underline S^N_s (0), \underline S^N_c
(0))\mathop{\Longrightarrow}\limits^{N\to\infty} (\underline S_s(0), \underline
S_c(0))$, we have joint convergence of the process of rescaled
amounts of slow and conserved quantities $(\underline S^N_s (\cdot),
\underline S^N_c (\cdot))$ to $(\underline S_s(\cdot), \underline
S_c(\cdot))$ in the Skorohod topology, with $\underline S_s$ the
solution of (\ref{eq:T21c}) and $\underline S_c$ the solution of
(\ref{eq:T21d}) with rates given by
(\ref{eq:rates-final-cons1})--(\ref{eq:rates-final-cons4}).
\end{theorem}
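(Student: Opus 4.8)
The plan is to mimic the proof of Theorem~\ref{thm:T2}, but now carry along the extra layer of conserved quantities by invoking Lemma~\ref{lem:crnlimit-cons} in place of (or alongside) the plain multi-scale averaging of Lemma~\ref{lem:crnlimit-ts3}. First I would, exactly as in the proof of Theorem~\ref{T1}, recast migration between compartments as a family of first-order reactions, obtaining a network in which every per-compartment amount $V_{id}^N$ is a ``fast'' species moving on the time scale $N^{\eta_f}\,dt$ (for $i\in\mathcal I^f$) or $N^{\eta_s}\,dt$ (for $i\in\mathcal I^s$), the within-compartment fast chemical reactions act on $N\,dt$, and the within-compartment slow reactions on $dt$. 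The genuinely new feature is that the conserved directions $\underline\theta^{c_j}$ now appear at two levels: the per-compartment combinations $V_{c_j d}^N:=\langle\underline\theta^{c_j},\underline V_{\cdot d,f}^N\rangle$ are left unchanged by the fast chemical reactions (by definition of $\Theta^f$ via~\eqref{eq:Thetaf} and~\eqref{eq:fast-zeta}) but are moved around by migration of fast species, whereas the totals $S_{c_j}^N=\sum_{d}V_{c_j d}^N$ are moved neither by migration nor by fast reactions and change only on the time scale $dt$ through slow reactions --- this uses $\mathcal K^c\cap\mathcal K^s=\emptyset$ from~\eqref{ass:cons} and the standing hypothesis that $\langle\underline\theta^{c_j},\underline S^N\rangle$ changes on the time scale $dt$.

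Next I would treat the four orderings in~\eqref{eq:9123} one at a time, as in Theorem~\ref{thm:T2}, in each case reading off the complete chain of time scales. In cases \textbf{(1)} and \textbf{(2)} ($\eta_f>1$) the fast species --- together with their conserved combinations --- equilibrate by migration before the fast reactions act, so that given the total $\underline s_f$ the per-compartment vector has limiting law $\mathbf P_{\underline s_f}$ from~\eqref{eq:migrlimiteq} (which automatically respects $\sum_d\langle\underline\theta^{c_j},\underline v_{\cdot d,f}\rangle=\langle\underline\theta^{c_j},\underline s_f\rangle$); Lemma~\ref{lem:crnlimit-cons} on the time scale $N\,dt$ then produces the averaged rates $\tilde\lambda_k^{\mathrm{CR}(\ell)}$ of~\eqref{eq:rates-1}, \eqref{eq:rates-2} and equips $\underline S_f^N$ with the stationary measure $\mu_{(\underline s_s,\underline s_c)}$, resp.\ $\mu_{(\underline{\underline v}_s,\underline s_c)}$, of Assumption~\ref{ass:generg2}(i), concentrated on $\langle\underline\theta^{c_j},\underline s_f\rangle=s_{c_j}$; composing these conditional laws yields the factorizations~\eqref{eq:mea-5}, \eqref{eq:mea-6}. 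In cases \textbf{(3)} and \textbf{(4)} ($\eta_f<1$) the order is reversed: on $N\,dt$ the fast species equilibrate within each compartment conditionally on the per-compartment conserved values $\underline{\underline v}_c$ (and the relevant slow data), with averaged rates $\tilde\lambda_{kd}^{\mathrm{CR}(\ell)}$ from~\eqref{eq:rates-3}, \eqref{eq:lambdaCR4} and product-over-compartments stationary law $\mu_{(\underline s_s,\underline{\underline v}_c)}$, resp.\ $\mu_{(\underline{\underline v}_s,\underline{\underline v}_c)}$; then on $N^{\eta_f}\,dt$ migration of fast species drives the per-compartment conserved vectors according to~\eqref{eq:Vc1} (with the already-averaged rates substituted), leaving the totals $\underline s_c$ fixed and converging to the stationary law $\mathbf P_{(\underline s_s,\underline s_c)}$, resp.\ $\mathbf P_{(\underline{\underline v}_s,\underline s_c)}$; together with the slow-species migration equilibrium on $N^{\eta_s}\,dt$ this produces~\eqref{eq:mea-7}, \eqref{eq:mea-8}.

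In every case the final step is the same: an application of Lemma~\ref{lem:crnlimit-ts3}, extended in the obvious way to the requisite number of nested scales, combined with Lemma~\ref{lem:crnlimit-cons} for the conserved directions, identifies the weak limit of $(\underline S_s^N,\underline S_c^N)$ on the time scale $dt$ as the solution of the martingale problem whose generator is obtained from $L_{s;c}$ of~\eqref{eq:genlimit-cons} by averaging $\lambda_{kd}^{\mathrm{CR}}$ against the appropriate measure from~\eqref{eq:mea-5}--\eqref{eq:mea-8}, i.e.\ with rates~\eqref{eq:rates-final-cons1}--\eqref{eq:rates-final-cons4}; the resulting equations are~\eqref{eq:T21c} and~\eqref{eq:T21d}. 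Well-posedness, hence uniqueness of the limit, is Assumption~\ref{ass:generg2}(ii), and tightness plus the compact containment condition for $(\underline S_s^N,\underline S_c^N)$ follow exactly as in the proofs of Lemmas~\ref{lem:crnlimit-ss} and~\ref{lem:crnlimit-ts} from Assumption~\ref{ass:generg2}(iii).

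I expect the main obstacle to be the bookkeeping in cases \textbf{(3)} and \textbf{(4)}: one must check that the per-compartment conserved quantities $V_{c_j d}^N$ genuinely occupy a time scale strictly between $N\,dt$ and $dt$, so that the averaging of Lemma~\ref{lem:crnlimit-cons} may legitimately be iterated --- first over the within-compartment fast-species equilibrium, then over the migration equilibrium of the conserved vectors --- to produce precisely the nested stationary measures posited in Assumption~\ref{ass:generg2}. This is exactly what~\eqref{ass:cons} and the hypothesis that $\langle\underline\theta^{c_j},\underline S^N\rangle$ moves on $dt$ are there to guarantee, and verifying the attendant uniform-integrability bounds --- those forcing the finite-variation intermediate martingales (one on each fast scale) to vanish, as in the identification of $\Gamma$ in Lemmas~\ref{lem:crnlimit-ts} and~\ref{lem:crnlimit-ts3} --- is where the bulk of the technical work lies; the rest is a matter of transcribing the single-compartment arguments with the enlarged index set $\mathcal I\times\mathcal D$.
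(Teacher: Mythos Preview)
Your proposal is correct and follows exactly the same route as the paper: the paper's own proof consists of two sentences stating that the argument follows the lines of Theorem~\ref{thm:T2}, with extra care needed only for the movement of conserved species in cases \textbf{(3)} and \textbf{(4)}, which is precisely where Assumption~\ref{ass:generg2} supplies the well-defined movement dynamics~\eqref{eq:Vc1}. Your write-up is in fact considerably more explicit than the paper's about what that ``extra care'' entails --- the two-level structure of the conserved quantities, the iterated averaging, and the identification of the intermediate time scale for $V_{c_j d}^N$ --- so nothing is missing.
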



Results analogous to Corollary~\ref{cor:indepSf} stating the
irrelevance of the
movement of fast species in cases (3) and (4) does not
carry over to the
case with conserved quantities, since on the time scale $N^{\eta_f} \,dt$
conserved quantities are still preserved and their movement equilibria
affects the end result.

%

\begin{lemma}
In the situation of Theorem~\ref{thm:T3}, assume $\mathcal I^s_\circ
= \varnothing$, that is, all slow species are continuous, and let
${\underline{\underline{\pi s}}}_s := (\pi_i(d)s_i)_{i\in\mathcal
I^s, d\in\mathcal D}$.
\label{lem:equivofeq2}
\begin{longlist}[(ii)]
\item[(i)] For stationary probability measures $\mu_{(\underline
s_s, \underline s_c)}(d\underline s_f)$ of $\underline
S_{f|(\underline s_s, \underline s_c)}$ from
Assumption~\ref{ass:generg2}$\mathrm{(i)}$(1) and
$\mu_{(\underline{\underline v}_s, \underline s_c)}(d\underline
s_f)$ of $\underline S_{f|(\underline{\underline v}_s, \underline
s_c)}$ from $\mathrm{(i)({2})}$, we have
\[
\mu_{(\underline s_s, \underline s_c)}(d\underline s_f) =
\mu_{(\underline{\underline{\pi s}}_s, \underline s_c)}(d\underline s_f).
\]
\item[(ii)] Likewise, for stationary probability measures
$\mu_{({\underline s}_{s}, \underline{\underline
v}_c)}(d\underline{\underline v}_f)$ of $\underline{\underline
V}_{f|({\underline s}_{s}, \underline{\underline v}_c)}$ from
$\mathrm{(i)({3})}$ and $\mu_{(\underline{\underline v}_{s},
\underline{\underline v}_c)}(d\underline{\underline v}_f)$ of
$\underline{\underline V}_{f|(\underline{\underline v}_{s},
\underline{\underline v}_c)}$ from $\mathrm{(i)({4})}$, we
have
\[
\mu_{({\underline s}_{s}, \underline{\underline v}_c)}(d\underline {\underline v}_f) =
\mu_{(\underline{\underline{\pi s}}_{s}, \underline{\underline v}_c)} (d\underline{\underline v}_f).
\]
\end{longlist}
\end{lemma}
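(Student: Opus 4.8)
The plan is to run, along the same lines as the proof of Lemma~\ref{lem:equivofeq}, an argument in which the conserved-quantity vector $\underline s_c$ (resp.\ $\underline{\underline v}_c$) is carried along as a passive parameter that never enters the reaction rates. For part~(i), by the uniqueness of the stationary probability measure postulated in Assumption~\ref{ass:generg2}(i)({\bf 1}), it suffices to check that $\mu_{(\underline{\underline{\pi s}}_s,\underline s_c)}$ is a stationary probability measure for the case-({\bf 1}) process $\underline S_{f|(\underline s_s,\underline s_c)}(\cdot)$. This process is a strong Markov process on the affine subspace $\{\underline s_f:\langle\underline\theta^{c_j},\underline s_f\rangle=s_{c_j},\ j=1,\dots,|\Theta^f|\}$, which is invariant because $\langle\underline\theta^{c_j},\underline\zeta^f_{\cdot k}\rangle=0$ for every $k\in\mathcal K^f$ by the defining property~\eqref{eq:Thetaf} of $\Theta^f$, with generator
\[
L_{f|(\underline s_s,\underline s_c)}g(\underline s_f)=\sum_{k\in\mathcal K^f_\circ}\tilde\lambda_k^{\text{CR(1)}}(\underline s_f,\underline s_s)\big[g(\underline s_f+\underline\zeta^f_{\cdot k})-g(\underline s_f)\big]+\sum_{k\in\mathcal K^f_\bullet}\tilde\lambda_k^{\text{CR(1)}}(\underline s_f,\underline s_s)\,\partial_{\underline s_f}g(\underline s_f)\cdot\underline\zeta^f_{\cdot k},
\]
with $\tilde\lambda_k^{\text{CR(1)}}$ from~\eqref{eq:rates-1}; crucially these rates depend on $(\underline s_f,\underline s_s)$ only, and $\underline s_f$ already fixes $\underline s_c$.

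The one substantive step is the rate identity $\tilde\lambda_k^{\text{CR(1)}}(\underline s_f,\underline s_s)=\tilde\lambda_k^{\text{CR(2)}}(\underline s_f,\underline{\underline{\pi s}}_s)$ for $k\in\mathcal K^f$, which holds for the same reason as the corresponding identity in the proof of Lemma~\ref{lem:equivofeq}: the hypothesis $\mathcal I^s_\circ=\emptyset$ forces $\mathbf P_{\underline s_s}(d\underline{\underline v}_s)$ to be the point mass at $\underline{\underline{\pi s}}_s$, and movement of fast and slow species is independent, so the product measure in~\eqref{eq:rates-1} factorizes and the $\underline{\underline v}_s$-integral collapses to evaluation at $\underline{\underline{\pi s}}_s$. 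Hence $L_{f|(\underline s_s,\underline s_c)}$ agrees, on the above affine subspace, with the generator of the case-({\bf 2}) process $\underline S_{f|(\underline{\underline v}_s,\underline s_c)}(\cdot)$ taken at $\underline{\underline v}_s=\underline{\underline{\pi s}}_s$. Integrating over $\mu_{(\underline{\underline{\pi s}}_s,\underline s_c)}$ and invoking that this measure is stationary for the case-({\bf 2}) process (Assumption~\ref{ass:generg2}(i)({\bf 2}) at $\underline{\underline v}_s=\underline{\underline{\pi s}}_s$) gives $\int L_{f|(\underline s_s,\underline s_c)}g\,d\mu_{(\underline{\underline{\pi s}}_s,\underline s_c)}=0$ for every $g\in\mathcal C_c^1$, which is the required stationarity; uniqueness then yields $\mu_{(\underline s_s,\underline s_c)}=\mu_{(\underline{\underline{\pi s}}_s,\underline s_c)}$.

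Part~(ii) is the same argument carried out compartment by compartment. The case-({\bf 3}) process $\underline{\underline V}_{f|(\underline s_s,\underline{\underline v}_c)}(\cdot)$ decouples over $d\in\mathcal D$, since $\tilde\lambda_{kd}^{\text{CR(3)}}$ from~\eqref{eq:rates-3} involves only the $d$th compartment, each block being confined to $\{\langle\underline\theta^{c_j},\underline v_{\cdot d,f}\rangle=v_{c_jd}\}$; the rate identity now reads
\[
\tilde\lambda_{kd}^{\text{CR(3)}}(\underline v_{\cdot d,f},\underline s_s)=\lambda_{kd}^{\text{CR}}\big(\underline v_{\cdot d,f},(\pi_i(d)s_i)_{i\in\mathcal I^s}\big)=\tilde\lambda_{kd}^{\text{CR(4)}}(\underline v_{\cdot d,f},\underline{\underline{\pi s}}_s),
\]
using again that $\mathbf P_{\underline s_s}$ is a point mass and that $\tilde\lambda_{kd}^{\text{CR(4)}}=\lambda_{kd}^{\text{CR}}$ by~\eqref{eq:lambdaCR4}. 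Thus the generators of the case-({\bf 3}) and case-({\bf 4}) (at $\underline{\underline v}_s=\underline{\underline{\pi s}}_s$) fast processes coincide on the common invariant affine subspace, and the unique stationary measure $\mu_{(\underline{\underline{\pi s}}_s,\underline{\underline v}_c)}$ of the latter is stationary for the former; uniqueness gives the claim.

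None of this is deep: the constructions are forced by uniqueness of the respective stationary measures, and the only place where care is genuinely needed — and where a sloppy argument could fail — is verifying that the conserved-quantity label $\underline s_c$ (resp.\ $\underline{\underline v}_c$) does not secretly appear in the rates, so that the two generators truly coincide on the same invariant affine subspace. Once the rate identities above are established, the remainder is the bookkeeping already carried out in Lemma~\ref{lem:equivofeq}.
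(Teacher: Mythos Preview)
Your proposal is correct and follows the same approach as the paper, which simply states that the proof proceeds ``in full analogy with the proof of Lemma~\ref{lem:equivofeq}'' after substituting the measures by their conserved-quantity-indexed counterparts. You have written out in detail precisely what that analogy entails, including the observation that $\underline s_c$ (resp.\ $\underline{\underline v}_c$) is carried passively and does not enter the transition rates, so the generator computation and stationarity check from Lemma~\ref{lem:equivofeq} go through unchanged on the appropriate affine subspace.
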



\begin{corollary}\label{cor:T3}
Suppose in Theorem~\ref{thm:T3} all slow species are continuous,
$\mathcal I_\circ^s = \varnothing$.
Then dynamics of \eqref{eq:T21c} is the same in
cases (1), (2) and also in cases (3),
(4).\vadjust{\goodbreak}
\end{corollary}

\begin{corollary}[(Homogeneous mass action kinetics)] \label{cor:hom2}
Corollary~\ref{cor:hom} carries over to the same situation as in
Theorem~\ref{thm:T3}.
\end{corollary}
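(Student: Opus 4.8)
The plan is to run the argument of Corollary~\ref{cor:hom} again, carrying the bookkeeping of the conserved quantities along. As in that corollary, only case~({\bf 1}) is at issue, so throughout I would assume $\eta_f,\eta_s>1$, i.e.\ movement of all species is the fastest mechanism (for cases ({\bf 3}), ({\bf 4}) the analogue already fails, cf.\ the discussion following Theorem~\ref{thm:T3}).

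First I would evaluate the rate $\tilde\lambda_k^{\text{CR(1)}}$ of \eqref{eq:rates-1} under mass action kinetics. Averaging over the product of multinomial and point-mass movement equilibria $\mathbf P_{(\underline s_f,\underline s_s)}$ is exactly the computation carried out in the proof of Corollary~\ref{cor:CT1}, and yields $\tilde\lambda_k^{\text{CR(1)}}(\underline s_f,\underline s_s)=\big(\sum_{d\in\mathcal D}\kappa_{kd}\prod_{i\in\mathcal I}\pi_i(d)^{\nu_{ik}}\big)\prod_{i:\alpha_i=0}\nu_{ik}!\binom{s_i}{\nu_{ik}}\prod_{i:\alpha_i>0}s_i^{\nu_{ik}}$. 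The homogeneity hypothesis $\kappa_k=|\mathcal D|\kappa_{kd}\prod_i\pi_i(d)^{\nu_{ik}}$ forces the prefactor to equal $\kappa_k$, so $\tilde\lambda_k^{\text{CR(1)}}$ is precisely the single-compartment mass-action rate $\lambda_k^{\text{CR}}$ with constant $\kappa_k$, evaluated at the fast sums $\underline s_f$ and the slow sums $\underline s_s$ (the latter held fixed as parameters).

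Next I would observe that, since $\underline{\underline\zeta}^f$ is the same matrix as in the non-spatial two-scale network, the basis $\Theta^f$ of $\mathcal N((\underline{\underline\zeta}^f)^t)$ and the associated parameters $\alpha_{c_j}$ coincide with those of Section~\ref{Sec:24}. Consequently the reduced fast-sum process $\underline S_{f|(\underline s_s,\underline s_c)}$ of Assumption~\ref{ass:generg2}(i)({\bf 1}), which solves \eqref{eq:crnlimit-1} with effective change $\underline{\underline\zeta}^f$, the rates just computed, and the constraints $\langle\underline\theta^{c_j},\cdot\rangle=s_{c_j}$, has the same generator and the same conservation constraints as the single-compartment fast process $\underline V_{f|(\underline v_s;\underline v_c)}$ of Assumption~\ref{ass:crnerg-cons}(i) (under $\underline v_s\leftrightarrow\underline s_s$, $\underline v_c\leftrightarrow\underline s_c$). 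The assumed uniqueness of the stationary probability measure then gives $\mu_{(\underline s_s,\underline s_c)}(d\underline s_f)=\mu_{(\underline v_s;\underline v_c)}(d\underline z)$; inserting this into \eqref{eq:rates-final-cons1} and using the first step once more yields $\bar\lambda_k^{\text{CR(1)}}(\underline s_s,\underline s_c)=\bar\lambda_k^{\text{CR}}(\underline v_s,\underline v_c)$ from \eqref{eq:averates-cons}. Finally, viewing the movement steps as first-order reactions (as in the proof of Theorem~\ref{T1}), each conserves every sum $S_i$ and hence every $S_{c_j}=\langle\underline\theta^{c_j},\underline S_f\rangle$, so movement enters neither $\mathcal K^s$ nor the effective change of the conserved quantities on the time scale $dt$; and since each within-compartment reaction $k$ has the same net change $\zeta_{ik}$ in every compartment, the matrices $\underline{\underline\zeta}^s$ and $\underline{\underline\zeta}^c$ governing \eqref{eq:T21c} and \eqref{eq:T21d} coincide with the single-compartment ones. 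Hence \eqref{eq:T21c} and \eqref{eq:T21d} reduce to \eqref{eq:crnlimit-ts} together with \eqref{eq:crnlimit-cons2} for $(\underline V_s,\underline V_c)$, which is the claim.

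The only point that is genuinely new relative to Corollary~\ref{cor:hom}, and the one I expect to require the most care, is matching the conservation constraints between the spatial fast-sum process and the non-spatial fast process so that comparing generators alone identifies the two equilibria; this is immediate once one notes that the fast stoichiometric matrix $\underline{\underline\zeta}^f$ is unchanged by passing to compartments, and that in case~({\bf 1}) only the equilibrium of $\underline S_f$ is involved, so the auxiliary movement dynamics of the conserved quantities that complicate cases~({\bf 3}) and~({\bf 4}) do not arise here.
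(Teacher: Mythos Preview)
Your proposal is correct and follows essentially the same approach as the paper's proof, which simply remarks that the argument of Corollary~\ref{cor:hom} goes through unchanged because the effect of conservation is identical in the spatial and single-compartment settings. You have spelled out in detail precisely the identification of the fast-sum process with the single-compartment fast process (same stoichiometric matrix $\underline{\underline\zeta}^f$, same rates after the mass-action averaging of Corollary~\ref{cor:CT1}, same conservation constraints), which is exactly what the paper's one-sentence proof is relying on implicitly.
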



%
\begin{example}[(Michaelis--Menten kinetics in multiple compartments)]
We \label{ex:michmenSp} place Michaelis--Menten reaction kinetics
from Example~\ref{ex:michmen} in a spatial multi-compartment
setting. The chemical reaction network is given (within
compartments) by the set of reactions from \eqref{eq:michmen1}, with
$\kappa_{k}'$ replaced by $\kappa_{kd}'$ in compartment $d$. We
have $\underline{\underline x} = (\underline{x}_{\cdot
d})_{d\in\mathcal D}$, $\underline{x}_{\cdot d} = (x_{Sd}, x_{Ed},
x_{ESd}, x_{Pd})$, and the dynamics in each compartment $d$ is given
by rates \eqref{eq:michmen2} with $\kappa_{k}'$ replaced by
$\kappa_{kd}'$. Movement of species is given as
in \eqref{eq:crn1}. Again, we set $\alpha_S = \alpha_{P}=1,
\alpha_{E} = \alpha_{ES} =0$, and $\kappa_{\mathfrak1d} =
\kappa_{\mathfrak1d}'$, $\kappa_{-\mathfrak1d} =
N^{-1}\kappa_{-\mathfrak1d}'$ and $\kappa_{\mathfrak2d} =
N^{-1}\kappa_{\mathfrak2d}'$ as in \eqref{eq:gene31} so, setting
the rescaled species counts
\[
v_{Sd} = N^{-1}x_{Sd}, \qquad v_{Ed} =
x_{Ed},\qquad  v_{ESd} = x_{ESd}, \qquad v_{Pd} =
N^{-1}x_{Pd},
\]
and $\beta_{\mathfrak1} = 1, \beta_{\mathfrak{-1}} = 1,
\beta_{\mathfrak2} = 1$ as in \eqref{eq:gene2}. We write
%
\[
\lambda_{\mathfrak1d}^{\mathrm{CR}}(\underline v_{\cdot d}) =
\kappa_{\mathfrak1d} v_{Sd} v_{Ed},\qquad \lambda_{\mathfrak{-1}d}^{\mathrm{CR}}(
\underline v_{\cdot d}) = \kappa_{\mathfrak-1d} v_{ESd},\qquad
\lambda_{\mathfrak
2d}^{\mathrm{CR}}(\underline v_{\cdot d}) =
\kappa_{2d} v_{ESd}.
\]
The process $\underline{\underline V}^N = (V_{Sd}^N, V_{Ed}^N,
V_{ESd}^N, V_{Pd}^N)$ is given as in Example \eqref{ex:michmen} plus
additional movement terms. We set $\eta_s = \eta_S = \eta_P$ for
movement of slow species and $\eta_f = \eta_E = \eta_{ES}$ for
movement of fast species. We assume (as in
Assumption~\ref{ass:migerg}) that movement of species $i$ has a
stationary probability distribution $(\pi_i(d))_{d\in\mathcal
D}$. We have $\mathcal I^f = \mathcal I^f_\circ= \{E, ES\}$ and
$\mathcal I^s = \mathcal I^s_\bullet= \{S, P\}$ and $\mathcal K^f =
\mathcal K^s = \{\mathfrak1, \mathfrak{-1}, \mathfrak2\}$,
$\mathcal K_S = \{\mathfrak1, \mathfrak{-1}\}$, $\mathcal K_E =
\mathcal K_{ES} = \mathcal K$ and $\underline{\underline\zeta},
\underline{\underline\zeta}^f, \underline{\underline\zeta}^s$ as in
\eqref{eq:MM4}. For conserved quantities within compartments, we set
$V_{Cd} := V_{Ed} + V_{ESd}$ and note that while movement changes
the values of $V_{Cd}$ the overall sum $S_C= \sum_{d\in\mathcal D}
V_{Cd}:= m$ is a conserved quantity for all times, and thus, the
dynamics of $S_C$ is trivial.

We derive the dynamics of $S_S$ (as in Example~\ref{ex:michmen},
$S_S+S_P$ is a conserved quantity on the slow time scale). We have
from \eqref{eq:T21} that
\[
S_S(t) = S_S(0) - \int
_0^t \bar\lambda^{\mathrm{CR}}
\bigl(S_S(u) \bigr)\,du
\]
for appropriate $\bar\lambda$. Since all slow species are
continuous, we are in the regime of Corollary~\ref{cor:T3} and we
only need to distinguish the following two cases:

\textit{Dynamics in cases} (1)${}+{}$(2).
From \eqref{eq:crnlimit-1} with ${\underline S}_{f|{\underline
s}_{s}}$ replaced by ${\underline S}_{f|({\underline s}_{s},
{\underline s}_{c})}$, we have
\begin{eqnarray*}
&&S_{E|s_S}(t)  - S_{E|s_S}(0)\\
&&\qquad = -Y_{\mathfrak1} \biggl( \int
_0^t \int\sum_{d\in\mathcal D}
\kappa_{\mathfrak1d} v_{Sd}v_{Ed} \mathbf
P_{(s_S, S_{E|s_S}(u))} (d\underline v_S, d\underline v_E)
\,du \biggr)
\\
&&\qquad\quad{} + Y_{\mathfrak{-1} + \mathfrak2} \biggl( \int_0^t \int
\sum_{d\in\mathcal D} (\kappa_{\mathfrak{-1}d} +
\kappa_{\mathfrak{2}d}) v_{ESd} \mathbf P_{(m-S_{E|s_S}(u))} (d\underline
v_{ES}) \,du \biggr)
\\
&&\qquad = -Y_{\mathfrak1} \biggl( \int_0^t
\biggl(\underbrace{\sum_{d\in\mathcal D} \kappa_{\mathfrak1d}
\pi_S(d) \pi_E(d)}_{=: \bar\kappa_{\mathfrak1}} \biggr)
s_S S_{E|s_S}(u) \,du \biggr)
\\
&&\qquad\quad{} + Y_{\mathfrak{-1} + \mathfrak2} \biggl( \int_0^t
\biggl(\underbrace{\sum_{d\in\mathcal D} (\kappa_{\mathfrak{-1}d}
+ \kappa_{\mathfrak{2}d}) \pi_{ES}(d)}_{=: \bar\kappa_{\mathfrak{-1}} +
\bar\kappa_{\mathfrak2}} \biggr) \bigl(m
- S_{E|s_S}(u) \bigr) \,du \biggr).
\end{eqnarray*}
Hence, the equilibrium of the above process is as in
Example~\ref{ex:michmen} given by $X\sim\mu_{(s_S,m)}(ds_E)$ where
\[
X \sim\operatorname{Binom} \biggl(m,\frac{\bar\kappa_{\mathfrak{-1}} +
\bar\kappa_{\mathfrak2}}{\bar\kappa_{\mathfrak{-1}} +
\bar\kappa_{\mathfrak2} + \bar\kappa_{\mathfrak1}s_S} \biggr)
\]
and $S_{ES|s_S}$ has equilibrium $m-X$. We next
compute $\bar\lambda^{\mathrm{CR(1)+(2)}}$ from
(\ref{eq:rates-final-cons1}) as
\begin{eqnarray*}
\bar\lambda^{\mathrm{CR(1)+(2)}} (s_S)& =& \bar
\lambda^{\mathrm{CR(1)+(2)}}_{\mathfrak1} (s_S) - \bar
\lambda^{\mathrm{CR(1)+(2)}}_{\mathfrak{-1}}(s_S)
\\
& = &\sum_{d\in\mathcal D} \int \bigl(\kappa_{\mathfrak1d}
\pi_S(d) \pi_E(d) s_S s_E
\\
&&\hspace*{28pt}{}- \kappa_{\mathfrak{-1}d} \pi_{ES}(d) (m-s_E) \bigr)
\mu_{(s_S,m)}(ds_E)
\\
& =& \sum_{d\in\mathcal D} \kappa_{\mathfrak1d}
\pi_S(d) \pi_E(d) s_S m
\frac{\bar\kappa_{\mathfrak{-1}}+
\bar\kappa_{\mathfrak2}}{\bar\kappa_{\mathfrak{-1}} +
\bar\kappa_{\mathfrak2} + \bar\kappa
_{\mathfrak
1}s_S}\\
&&\hspace*{16pt}{}- \kappa_{\mathfrak{-1}d} \pi_{ES}(d)m
\frac{\bar\kappa_{\mathfrak
1}s_S}{\bar\kappa_{\mathfrak{-1}} + \bar
\kappa
_{\mathfrak2} +
\bar\kappa_{\mathfrak1}s_S}
\\
& = &\frac{
m\bar\kappa_{\mathfrak1}\bar\kappa_{\mathfrak2}s_S
}{\bar\kappa_{\mathfrak{-1}} + \bar\kappa_{\mathfrak2} +
\bar\kappa_{\mathfrak1}s_S}.
\end{eqnarray*}
Comparing this with \eqref{eq:michmenVS}, we see that in cases
(1)${}+{}$(2) Michaelis--Menten kinetics in multiple compartments
equals the same kinetics in a single compartment, when
$\kappa_{\mathfrak i}$ is exchanged by $\bar\kappa_{\mathfrak i}$,
$i={-\mathfrak1}, {\mathfrak1}, {\mathfrak2}$; compare also with
Corollary~\ref{cor:T3}.

\textit{Dynamics in cases} (3)${}+{}$(4). For simplicity, we
assume that $\lambda^{\mathrm{M}}_{d,d'} :=
\lambda^{\mathrm{M}}_{E,d,d'} = \lambda^{\mathrm{M}}_{ES, d, d'}$,
that is,
movement of $E$ and $ES$ is the same, and hence
$\pi_E(d)=\pi_{ES}(d), d\in\mathcal D$ [we will use this property
for deriving $\mathbf P_{(s_S, m)}(d\underline v_C)$ and $\mathbf
P_{(\underline v_S, m)}(d\underline v_C)$ below]. We will treat the
cases (3) and (4) separately and show the result of
Corollary~\ref{cor:T3} which states that these two cases lead to the
same
limiting dynamics.

(3) From Assumption~\ref{ass:generg2}(i)(3), for
$\underline v_C$ with $\sum_{d\in\mathcal D}v_{Cd}=m$, we have
\begin{eqnarray*}
&&V_{Ed|(s_S, \underline v_C)}(t) - V_{Ed|(s_S, \underline
v_C)}(0)
\\
&&\qquad= - Y_{\mathfrak1d} \biggl(\int_0^t \int
\kappa_{\mathfrak1d}V_{Ed|(s_S, \underline v_C)}(u)v_{Sd} \mathbf
P_{s_S}(dv_{Sd})\,du \biggr)
\\
&&\qquad\quad{} + Y_{(\mathfrak{-1} + \mathfrak2)d} \biggl(\int_0^t (
\kappa_{\mathfrak{-1}d} + \kappa_{\mathfrak{2}d}) \bigl(v_{Cd} -
V_{Ed|(s_S, \underline v_C)}(u) \bigr) \,du \biggr)
\\
&&\qquad = - Y_{\mathfrak
1d} \biggl(\int_0^t
\kappa_{\mathfrak1d}V_{Ed|(s_S, \underline
v_C)}(u)s_{S}\pi_S(d)
\,du \biggr)
\\
&&\qquad\quad{} + Y_{(\mathfrak{-1} + \mathfrak2)d} \biggl(\int_0^t (
\kappa_{\mathfrak{-1}d} + \kappa_{\mathfrak{2}d}) \bigl(v_{Cd} -
V_{Ed|(s_S, \underline v_C)}(u) \bigr) \,du \biggr).
\end{eqnarray*}
Hence, the equilibrium of $V_{Ed|(s_S, \underline v_C)}$ is as in
Example~\ref{ex:michmen} given by
\begin{eqnarray*}
X_d\sim\mu_{(v_{Sd}, v_{Cd})}(ds_{Ed}) =
\operatorname{Binom} \biggl(v_{Cd},\frac{\kappa_{\mathfrak{-1}d} +
\kappa_{\mathfrak2d}}{\kappa_{\mathfrak{-1}d} +
\kappa_{\mathfrak2d} + \kappa_{\mathfrak
1d}s_S\pi_S(d)} \biggr)
(s_{Ed})
\end{eqnarray*}
and $V_{ESd|(s_S, \underline v_C)}$ has equilibrium $v_{Cd}-X_d$. We
compute $\bar\lambda^{\mathrm{CR(3)}}$ from
(\ref{eq:rates-final-cons3}) as
\begin{eqnarray*}
 \bar\lambda^{\mathrm{CR(3)}}(s_S) &=& \bar\lambda^{\mathrm{CR(3)}}_{\mathfrak1}
(s_S) - \bar\lambda^{\mathrm{CR(3)}}_{\mathfrak{-1}}(s_S)
\\
& =& \sum_{d\in\mathcal D} \int\kappa_{\mathfrak1d}
v_{Ed}s_S\pi_S(d)\\
&&\hspace*{25pt}{} - \kappa_{\mathfrak{-1}d}(v_{Cd}-v_{Ed})
\mu_{(v_{Sd}, v_{Cd})}(dv_{Ed}) \mathbf P_{(s_S,m)}(dv_{Cd})
\\
& = &\sum_{d\in\mathcal D} \int \biggl(\kappa_{\mathfrak1d}
v_{Cd} \frac{\kappa_{\mathfrak{-1}d} + \kappa_{\mathfrak
2d}}{\kappa_{\mathfrak{-1}d} + \kappa_{\mathfrak2d} +
\kappa_{\mathfrak1d}s_S\pi_S(d)} s_S \pi_S(d)
\\
&&\hspace*{49pt}{} -\kappa_{\mathfrak{-1}d}v_{Cd} \frac{\kappa_{\mathfrak1d}s_S\pi_S(d)}{\kappa_{\mathfrak{-1}d} +
\kappa_{\mathfrak2d} + \kappa_{\mathfrak1d}s_S\pi_S(d)} \biggr)\mathbf
P_{(s_S,m)}(dv_{Cd})
\\
& =& \sum_{d\in\mathcal
D} \int\frac{ v_{Cd}\kappa_{\mathfrak1d} \kappa_{\mathfrak
2d}s_S \pi_S(d) }{\kappa_{\mathfrak{-1}d} + \kappa_{\mathfrak
2d} + \kappa_{\mathfrak1d}s_S\pi_S(d)}\mathbf
P_{(s_S,m)}(dv_{Cd}).
\end{eqnarray*}
Consider the equilibrium $\mathbf P_{(s_S,m)}(dv_{Cd})$ of movement
dynamics for conserved species $V_{Cd}=V_{Ed}+V_{ESd}$. Since we
assume the same migration dynamics for $E$ and $ES$, the
equilibrium $\mathbf P_{(\underline s_S, m)}(d\underline v_C)$
is given by a multinomial distribution with parameters $m,
(\pi_E(d))_{d\in\mathcal D}$ and $\int v_{Cd} \mathbf
P_{(s_S,m)}(dv_{Cd})=m\pi_E(d), d\in\mathcal D$.

For (4), the overall rate $\bar\lambda^{\mathrm{CR(4)}}$ from
(\ref{eq:rates-final-cons4}) has the same form as
$\bar\lambda^{\mathrm{CR(3)}}$ except that $\mathbf
P_{(s_S,m)}(dv_{Cd})$ is replaced by $\mathbf
P_{(v_{Sd},m)}(dv_{Cd})$. We first derive the equilibrium
probability distribution $\mu_{(v_{Sd}, v_{Cd})}(dv_{Ed})$
as above. Here, we find that $s_S\pi_S(d)$ is replaced by $v_{Sd}$,
leading to
\[
X_d\sim\mu_{(v_{Sd}, v_{Cd})}(dv_{Ed}) =
\operatorname{Binom} \biggl(v_{Cd},\frac{\kappa_{\mathfrak{-1}d} +
\kappa_{\mathfrak2d}}{\kappa_{\mathfrak{-1}d} + \kappa_{\mathfrak
2d} + \kappa_{\mathfrak1d}v_{Sd}} \biggr)
(v_{Ed}).
\]
The conserved quantities $V_{Cd}(\cdot)$ follow the same dynamics
as in case (3) except that $s_S\pi_S(d)$ is replaced by
$v_{Sd}$ and, therefore,
$\mathbf P_{(\underline v_S, m)}(d\underline v_C)$ is a multinomial
distribution with parameters $m, (\pi_E(d))_{d\in\mathcal D}$ as in
case (3). Hence, in the equation for the rates we have
$\int v_{Cd} \mathbf P_{(v_{Sd},m)}(dv_{Cd}) =m\pi_E(d),
d\in\mathcal D$, and the limiting dynamics in cases $({3})$ and
$({4})$ is given by
\[
\bar\lambda^{\mathrm{CR(3)+(4)}}(s_S)  = \sum
_{d\in\mathcal D} \frac{m\pi_E(d)\kappa_{\mathfrak{1}d} \kappa_{\mathfrak
2d}s_S\pi_S(d)}{\kappa_{\mathfrak{-1}d} +
\kappa_{\mathfrak2d} + \kappa_{\mathfrak1d}s_S\pi_S(d)}.
\]
Although we have seen that the dynamics for cases (1)${}+{}$(2),
as well as for (3)${}+{}$(4) is the same, in general
they are quite different from each other, unless some very special
relationships between the chemical constants and movement equilibria
in different compartments are assumed. See the results in
\citet{PfafPop14} on notions of dynamical homogeneity that allow one
to make some interesting conclusions.
\end{example}

\section{Discussion}
\label{Sec:4}

\textit{Specific features and extensions of spatial chemical
reaction models.}

(a) \emph{Heterogeneous reaction and migration
rates.} The reaction rates $\Lambda_{kd}^{\mathrm{CR}}$ in general
depend on the compartment $d$. For the same reason, the outflow of
species $i$ from compartment $d'$, $\sum_{d''\in\mathcal
D}\Lambda^{\mathrm{M}}_{i,d',d''}$ might depend on $i$ and $d'$.
Moreover, it is possible that $\Lambda^{\mathrm{CR}}_{kd}(\underline
x_{\cdot d})$ is zero for some compartments, that is, our model is
flexible enough to restrict some reactions to a subset of
compartments. Analogously, movement of certain species types can be
restricted to only a subset of compartments, that is,
$\Lambda^{\mathrm{M}}_{i,d,d'}$ can also be set to zero for some
$i,d,d'$. The only thing which is required is that every reaction $k$
happens within at least one compartment.

(b) \emph{Geometry of space.} The geometry of the spatial
system has not been explicitly relevant for our results. The reason is
that movement dynamics is assumed to happen at a different time scale
(either faster or slower) than the effective reaction dynamics of
either the slow or fast species. This implies that only the
equilibrium of the movement is relevant for any dynamics occurring on
the respectively slower scale.

(c) \emph{Chemical conformations.} Our model can be extended in
order to model different chemical conformations of chemical species
instead of spatial compartments. For this, let $\mathcal D_i$ be the
set of possible conformations of species $i$. Then any molecule of
species $i$ performs a Markov chain on $\mathcal D_i$ due to changes
in conformation. Moreover, in this case for each type of reaction $k$
its reaction rate $\Lambda^{\mathrm{CR}}_{k,\underline d, \underline
d'}$ might then depend on all conformations of reacting and produced
molecules $\underline d = (d_i)_{i\in\mathcal I}$ and $\underline d' =
(d'_i)_{i\in\mathcal I}$, respectively. For example, our results can
be applied to Michaelis--Menten kinetics with multiple conformations of
the enzyme and of the enzyme-substrate complex [see \citet{Kou}].

(d) \emph{Other density dependent processes.} The model can
also be applied to other density dependent Markov chain models, such
as epidemic or ecological models. Analogous results can also be made
for density dependent stochastic differential models of stochastic
population growth in spatially heterogeneous environments [see
\citet{EvansSchreiber}].

\textit{Conclusions.}
The main conclusion of our paper is the following algorithm for
determining the dynamics of a spatial chemical reaction network:
assume we are given a network of the form \eqref{eq:CRN} in a spatial
context, that is, \eqref{eq:crn0} holds with reaction rates as in
Assumption~\ref{ass:31}; introduce a (large) scaling constant $N$ and
rewrite the dynamics of all species in the form \eqref{eq:crn1} (for
some $\alpha_i$'s, $\eta_i$'s and $\beta_k$'s) assuming
 \eqref{eq:resc10} and  \eqref{eq:resc10sp} hold (admittedly, the
choice of $N$, $\alpha_i$'s and $\beta_k$'s is rather an art than a
science---for simplicity, we are assuming here that this step has been
done already); in addition, suppose every species moves between
compartments as in Assumption~\ref{ass:migerg}; the goal is to
understand the dynamics of overall normalized sums of species over
compartments as given in \eqref{eq:S2}.

There are two cases: either the system is on a single-scale, that is,
\eqref{eq:single-scale} holds, or the system is two-scale, that is,
\eqref{eq:two-scale} holds. (We do not treat higher order scales in
this paper.)
\begin{longlist}[(ii-b)]
\item[(i)] In the single-scale case Theorem~\ref{T1} applies. Essentially,
one has to average all reaction rates of reactions affecting slow
species over the equilibrium distribution of movement of all
species. If reaction rates are given by mass action kinetics,
Corollary~\ref{cor:CT1} applies.

\item[(ii)] The two-scale case is considerably more complicated. Here, every
species is either fast or slow and we have to consider all orders of
the time scale of fast reactions and movement of fast and slow
species. We call $S_f$ the overall sum of normalized fast species and
$S_s$ the overall sum of normalized slow species. Consider the
submatrices of slow and fast reactions, $\underline{\underline
\zeta}^f$ and $\underline{\underline\zeta}^s$ from
\eqref{eq:fast-zeta} and \eqref{eq:slow-zeta}, respectively. A
conserved quantity for the fast reaction subnetwork is a nontrivial
element of the null-space of $(\underline{\underline\zeta
}^f)^{\textsc t}$.

\item[(ii-a)] If there is no conserved quantity, we can use
Theorem~\ref{thm:T2}. Here, there are up to four time scales to
consider, movement of fast and slow species, the time scale of the fast
reactions and the time scale of the slow species.
In all cases, in order to determine the effective rate on $S_s$ on a
slower time scale, one has to average over the equilibrium of all
higher time scales. Interestingly, if all slow species are continuous
(i.e., have a deterministic process as a limit), it only matters if
the fast species move faster or slower than fast reactions. The speed
of the movement of slow species does not matter (see
Corollary~\ref{cor:T2}).

\item[(ii-b)] If there are conserved quantities for the fast reaction
subnetwork, these conserved quantities can still change on a slower
time scale. Here, we are assuming that this time scale is the same as
the time scale of the slow species.
The main difference from the case without conserved quantities is that
on the fast time scale, the equilibria we need to consider for
averaging are concentrated on a fixed conserved quantity. Then,
basically, the conserved quantity can be treated as new species with
its own dynamics (which changes on the timescale of slow species by
assumption). Again, there are four cases to consider; see
Theorem~\ref{thm:T3}. Also, if all slow quantities are continuous, it
only matters if the fast species move faster or slower than the fast
reactions; see Corollary~\ref{cor:T3}.
\end{longlist}
%






\printaddresses
\end{document}